\newtheorem{thm}{Theorem}[section]
\newtheorem{lem}[thm]{Lemma}
\newtheorem{prop}[thm]{Proposition}
\theoremstyle{definition}
\newtheorem{defn}[thm]{Definition}
\newtheorem{cor}[thm]{Corollary}
\theoremstyle{remark}
\newtheorem{rem}[thm]{Remark}
\numberwithin{equation}{section}
\newcommand{\al}{\alpha}
\newcommand{\be}{\beta}
\newcommand{\ga}{\gamma}
\newcommand{\Ga}{\Gamma}
\newcommand{\ep}{\varepsilon}
\newcommand{\ka}{\kappa}
\newcommand{\la}{\lambda}
\newcommand{\si}{\sigma}
\newcommand{\Si}{\Sigma}
\renewcommand{\th}{\theta}
\newcommand{\va}{\varphi}
\newcommand{\csi}{\xi}
\newcommand{\x}{\times}
\newcommand{\CC}{\mathcal C}
\newcommand{\EE}{\mathcal E}
\newcommand{\imm}{{\mathrm {Imm}}}
\newcommand{\Z}{\mathbb Z}
\newcommand{\N}{\mathbb N}
\newcommand{\R}{\mathbb R}
\newcommand{\CP}{{\mathbb C}{P}}
\newcommand{\del}{\partial}
\newcommand{\lra}{\longrightarrow}
\newcommand{\co}{\colon\thinspace}
\newcommand{\im}[1]
 {{\mathrm {im}}\thinspace{#1}}
\renewcommand{\min}[2]
 {{\mathrm {min}}({#1},{#2})}
\renewcommand{\int}{{\mathrm {int}}}
\begin{document}
\mathsurround=1pt 
\title%[Fold  maps and  the signature]
%{A Poincar\'e-Hopf type theorem for the signature and cobordism of fold maps}
{Fold cobordisms %stably framed manifolds, 
and 
a Poincar\'e-Hopf type theorem for the signature}

\subjclass[2000]{Primary 57R45, 57R20; Secondary 57R90, 57R25, 57R42, 57R70}

\keywords{Fold singularity, fold map, signature, cobordism, stable framing,  immersion}

\thanks{The author was supported 
%by the JSPS Research Fellowship for Young Scientists,
%then 
by OTKA NK81203 and by the Lend\"ulet program 
of the Hungarian Academy of Sciences.}

\author{Boldizs\'ar Kalm\'{a}r}

\address{Alfr\'ed R\'enyi Institute of Mathematics,
Hungarian Academy of Sciences \newline
Re\'altanoda u. 13-15, 1053 Budapest, Hungary}
\email{boldizsar.kalmar@gmail.com}

%
%\address{Budapest University of Technology and Economics \newline
%M\H{u}egyetem rkp. 3, 1111 Budapest, Hungary}

%\address{E\"{o}tv\"{o}s Lor\'{a}nd University, P\'{a}zm\'{a}ny P\'{e}ter
%s\'{e}t\'{a}ny 1/c.  H-1117 Budapest, Hungary}
%\email{kalmbold@cs.elte.hu}

\begin{abstract}
We give complete geometric invariants of cobordisms of framed fold maps.
These invariants are of two types. We take the immersion of the fold singular set into the target manifold
together with information about non-triviality of the normal bundle of the singular set in the source manifold.
These invariants were introduced in the author's earlier works.
Secondly we take the induced stable partial framing on the source manifold whose cobordisms were studied in general
by Koschorke. We show that these invariants describe completely the cobordism groups of framed fold maps into $\R^n$.
Then we are looking for dependencies between  these invariants
and  we study fold maps of $4k$-dimensional manifolds into $\R^2$.
We construct special fold maps which are
representatives of the fold cobordism classes
and we also compute the cobordism groups.
We obtain a Poincar\'e-Hopf type formula, which connects
local data of the singularities
of a fold map
of an oriented
$4k$-dimensional manifold $M$ to
the signature of $M$.
We also study the unoriented case analogously and prove a similar formula 
about the twisting of the normal bundle of the fold singular set.
\end{abstract}

\maketitle

%{\SMALL{ \tableofcontents}}

\section{Introduction}\label{introduction}

Let $n \geq 1$ and $q \geq 0$.
%Fold and cusp maps are the simplest smooth maps with singularities. 
A smooth map 
$f$ of an $(n+q)$-dimensional manifold $M$ into $\R^n$ is called 
a {\it fold map} if it
can be written as
$$%\begin{equation}\label{foldnormalform}
f(x_1,\ldots,x_{n+q})=\left(x_1,\ldots,x_{n-1},  \sum_{i=n}^{n+q}{\pm x_i^2}\right)
$$%\end{equation} 
in local charts around each critical point $p \in M$ and critical value $f(p)$. Fold maps are natural generalizations 
of Morse functions, and play a basic role in the theory of singular maps.
For example, it is always possible to deform any map of an at least $2$ dimensional closed orientable  manifold with even  Euler
characteristic  into $\R^2$  to obtain a fold map, see \cite{Eli1, Ka00, Lev1, Mi84}.
%
%An example for the application of cusp maps is in low dimension:
%We call a  map of an oriented $4$-manifold into a surface a {\it generalized
%broken Lefschetz fibration} if it is a cusp map in general position outside of a finite number of (anti)Lefschetz singular points, i.e.,
%singular points which have the local form $(u,v) \mapsto u^2+v^2$ or
%$(u,v) \mapsto {\bar u}^2+v^2$ in complex coordinates.
%In this paper, besides other results we obtain the following.
%\begin{prop}\label{intro4dimben}\noindent
%%\begin{enumerate}
%%\item
%%An isomorphism $\CC ob_{{}}^O(4,-2) \to \Z \oplus \Z_2^2$ is given by the map
%%$$[f \co X^{4} \to \R^2] \mapsto \si(X^{4})/2 \oplus 
%%[f|_{S_0(f)}] \oplus [f|_{S_1(f)}] .$$
%%\item
%If $f \co X^4 \to S^2$ is a generalized broken Lefschetz 
%fibration over the $2$-sphere,
%then $\si( X^{4} )  \equiv  c(f) - l(f) + 2(t(f) + \del([f]))  \mod{4}$
%holds.%, where $l(f)$ denotes the number of (anti)Lefschetz singular points.
%%\end{enumerate}
%\end{prop}

%Here $\si$ denotes the signature, $c(f)$ denotes the algebraic sum of cusps,...

%...egyeb jelolesek...

Given a fold map  $f \co M^{2+q} \to \R^2$, the {\it singular set of $f$}, i.e.\ the set of points in $M$ where the rank of the differential of $f$ 
is less than $2$, is a $1$-dimensional submanifold of $M$. 
A fold map, which we always presume being in generic position, restricted to its singular set is a generic immersion.
%We will always assume that a fold map is in generic position.
In the case of $q$ odd, Chess \cite{Chess} found a relation between the number of double points of this immersion, the twisting of the normal bundle of the singular set in $M$ and certain 
Stiefel-Whitney classes.
Namely, let
%\begin{thm}[\cite{Chess}]
%Let 
$k \geq 1$ and $f \co M^{2k+1} \to \R^2$ be a fold map of a closed orientable manifold.
Then
\[
 t(f) + \tau(f) \equiv \left \{
 \begin{array}{ll}
 0 \mod{2} & \mbox{if $k$ is odd,} \\
 w_2 w_{2k-1}[M^{2k+1}]  \mod{2} & \mbox{if $k$ is even.}
 \end{array}
 \right. \]
%\end{thm}
Here $w_i$ denotes the $i$th Stiefel-Whitney class,
$t(f)$ denotes the number of double points of 
the immersed singular set of $f$ in $\R^2$
%the set of critical values of $f$ in $\R^2$
and $\tau(f)$ measures how non-trivial the normal bundle of the singular set of $f$ in $M$ is, for the precise definitions, see Section~\ref{geomcobinv} of the present paper.

In \cite{Chess} this result is called a Poincar\'e-Hopf type theorem because it relates 
some topological property of a $(4k+1)$-dimensional manifold $M$ to the local behavior of its map into $\R^2$ (which of course corresponds to some local behavior of some vector fields on $M$).

In the present paper, we look for this type of results in the case of $4k$-dimensional manifolds.
We obtain

\begin{thm}\label{introsign}
Let $k \geq 1$ and $f \co M^{4k} \to \R^2$
be a fold map of a closed oriented $4k$-dimensional manifold. Then
$$t(f) + \tau(f)  \equiv   \frac{\si( M^{4k} ) }{2}  \mod{2}.$$
%This congruence also holds for a stable map
% $f \co M^{4k} \to S^2$  of
% a closed oriented $4k$-dimensional manifold into the $2$-sphere if there exists a regular value
% $y \in S^2$ such that the fiber $f^{-1}(y)$ is an oriented null-cobordant $(4k-2)$-dimensional manifold.
 \end{thm}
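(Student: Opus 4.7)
My plan is to prove Theorem~\ref{introsign} by exhibiting both sides as homomorphisms from the cobordism group of oriented fold maps $M^{4k}\to\R^2$ to $\Z/2$, and then checking the identity on an explicit set of generators. The abstract announces both a computation of this cobordism group and a construction of special fold maps representing each cobordism class, so once the two invariance statements are secured the theorem is reduced to a finite verification on the chosen generators.

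Invariance of $t(f)+\tau(f)\pmod 2$ under fold cobordism is already inherent in the author's earlier work underlying the Chess-type formula recalled in the introduction: a fold cobordism $F\co W^{4k+1}\to\R^2\x[0,1]$ has for its singular set an immersed compact $2$-manifold whose boundary recovers the immersed singular curves at the two ends, and whose normal line bundle in $W$ restricts along that boundary to the normal line bundles of the two singular curves in their source manifolds; hence both $t\pmod 2$ and $\tau\pmod 2$ are conserved by the cobordism. For the right-hand side the key preliminary fact is that $\sigma(M^{4k})$ is always even when $M$ admits a fold map to $\R^2$. This should be extracted from the induced stable partial framing on $M$ (in the sense of Koschorke, as used earlier in the paper), which places $[M]$ in a bordism group where the signature is forced to be divisible by $2$; once the parity is known, $\sigma/2\pmod 2$ descends to a fold cobordism invariant because $\sigma$ itself is already an oriented bordism invariant.

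With both sides identified as $\Z/2$-valued fold cobordism invariants the argument is completed by a direct numerical check on each generator. For the special representatives produced earlier in the paper the singular set, its normal line bundle and the middle-dimensional intersection form of $M$ are all explicit, so $t(f)$, $\tau(f)$ and $\sigma(M)$ can be computed by hand and compared. I expect the main obstacle to lie precisely in the parity assertion for $\sigma$ together with the exhibition of at least one generator on which $\sigma(M)/2\pmod 2$ is non-zero: without such a generator the identity reduces to the tautology $0\equiv 0$, while with it the formula is pinned down, and compatibility on all remaining cobordism classes then follows by additivity of both sides.
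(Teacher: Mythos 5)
Your plan matches the paper's proof exactly: establish $t+\tau$ and $\sigma/2 \bmod 2$ as $\Z_2$-valued cobordism invariants, use the computed isomorphism $\CC ob^O(2,4k-2)\cong\Omega_{4k}^{2|\chi}\oplus\Z_2^{4k-2}$, and verify the identity on generators, with the Morse-function-bundle maps $\va_{j,4k-2,i_j},\va_{j,4k-2,e_j}$ (all of zero signature), fold maps of fibrations over $S^2$ (Proposition~\ref{fiberedmaprelOK}), and the crucial non-trivial generator $f_C\co\CP^{2k}\#\CP^{2k}\to\R^2$ (Proposition~\ref{prop2CP^2-nfoldmapproof-al}) for which $\sigma/2$ and $\tau$ are both odd. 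The parity of $\sigma$ you flag follows most directly from the elementary congruence $\chi(M)\equiv\sigma(M)\bmod 2$ for closed oriented $4k$-manifolds together with Levine's criterion that fold maps to $\R^2$ exist iff $\chi$ is even, which is compatible with but more elementary than the Koschorke-framing route you suggest.
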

Here $\si(M^{4k})$ denotes the signature of the closed  oriented $4k$-dimensional manifold $M^{4k}$.
The invariants $t$ and $\tau$ are again the number of double points of the immersed singular set and the twisting of the normal bundle of the singular set of a fold map.

%Note that 
By \cite{Lev1} the manifold $M^{4k}$ has a fold map into $\R^2$ if and only if 
the Euler characteristic of $M^{4k}$ is even. A corollary of Theorem~\ref{introsign} gives the condition for fold maps when 
the signature of $M^{4k}$  is divisible by $4$. 
\begin{cor}
Let $k \geq 1$ and $M^{4k}$ be a closed oriented $4k$-dimensional manifold.
Then the following are equivalent.
\begin{enumerate}[\rm (i)]
\item
The signature $\si(M^{4k})$ is divisible by $4$.
\item
There is a fold map $f \co M^{4k} \to \R^2$ such that $t(f) \equiv   \tau(f)    \mod{2}$.
\end{enumerate}
Each of these implies that
for any fold map $f \co M^{4k} \to \R^2$ we have $t(f) \equiv   \tau(f)    \mod{2}$.
\end{cor}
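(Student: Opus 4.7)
The plan is to deduce the corollary as a direct algebraic consequence of Theorem~\ref{introsign}, together with Levine's existence criterion for fold maps into $\R^2$. The main content has already been absorbed into Theorem~\ref{introsign}; what remains is to juggle the congruence modulo $2$ against the divisibility by $4$ of $\si(M^{4k})$, and to verify that $\si(M^{4k})/2$ is actually an integer whenever a fold map exists, so that the statement makes sense.

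First I would record the following preliminary observation: if $M^{4k}$ admits a fold map into $\R^2$, then by \cite{Lev1} the Euler characteristic $\chi(M^{4k})$ is even. By Poincar\'e duality in the closed oriented $4k$-dimensional case, $\chi(M^{4k}) \equiv b_{2k}(M^{4k}) \pmod{2}$, and similarly $\si(M^{4k}) \equiv b_{2k}(M^{4k}) \pmod{2}$, so $\si(M^{4k})$ is even and $\si(M^{4k})/2$ is a well-defined integer. This guarantees that the right-hand side of Theorem~\ref{introsign} is an honest mod~$2$ quantity.

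For the implication (i) $\Rightarrow$ (ii), I would assume $\si(M^{4k}) \equiv 0 \pmod{4}$. Then $\si(M^{4k})$ is in particular even, hence $\chi(M^{4k})$ is even, so by Levine's theorem some fold map $f \co M^{4k} \to \R^2$ exists. Applying Theorem~\ref{introsign} to any such $f$ gives $t(f) + \tau(f) \equiv \si(M^{4k})/2 \equiv 0 \pmod{2}$, which is exactly $t(f) \equiv \tau(f) \pmod 2$. This simultaneously proves the last sentence of the corollary (that the congruence then holds for \emph{every} fold map), so both assertions are handled at once. Conversely, for (ii) $\Rightarrow$ (i), the hypothesis $t(f) \equiv \tau(f) \pmod 2$ for some fold map $f$ gives $t(f) + \tau(f) \equiv 0 \pmod 2$, and Theorem~\ref{introsign} then yields $\si(M^{4k})/2 \equiv 0 \pmod 2$, i.e.\ $\si(M^{4k}) \equiv 0 \pmod 4$.

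There is essentially no obstacle here beyond the bookkeeping just described; all the geometric and topological content is contained in Theorem~\ref{introsign} and in Levine's realization result. The only point worth double-checking carefully when writing the proof is the parity identity $\chi(M^{4k}) \equiv \si(M^{4k}) \pmod 2$, which is needed both to make sense of the formula and to extract existence of a fold map from divisibility of the signature in step (i) $\Rightarrow$ (ii).
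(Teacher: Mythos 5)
Your proof is correct and is essentially the argument the paper leaves implicit: the corollary is a direct algebraic consequence of Theorem~\ref{introsign}, Levine's existence criterion (even Euler characteristic), and the standard parity identity $\chi(M^{4k}) \equiv \si(M^{4k}) \pmod 2$, which you justify correctly via Poincar\'e duality and the decomposition $b_{2k} = b_{2k}^+ + b_{2k}^-$. Nothing further is needed, and the observation that the same congruence then holds for \emph{all} fold maps of $M^{4k}$ falls out of Theorem~\ref{introsign} exactly as you note.
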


In the non-orientable case, we have
\begin{thm}\label{introsignunori}
Let $k \geq 1$ and $f \co M^{4k} \to \R^2$
be a fold map of a closed  (possibly non-orientable) $4k$-dimensional manifold. Then
$$\tau^1(f) + \tau^2(f)  \equiv 0  \mod{2}.$$
%This congruence also holds for a stable map
% $f \co M^{4k} \to S^2$  of
% a closed oriented $4k$-dimensional manifold into the $2$-sphere if there exists a regular value
% $y \in S^2$ such that the fiber $f^{-1}(y)$ is an oriented null-cobordant $(4k-2)$-dimensional manifold.
 \end{thm}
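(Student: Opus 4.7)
My plan is to adapt the strategy of Theorem~\ref{introsign} to the non-orientable setting. Each component $C$ of $S_f$ is a circle, and the normal bundle of $C$ in $M$ splits canonically into its positive and negative definite Hessian parts $E^+\oplus E^-$. The invariants $\tau^1(f)$ and $\tau^2(f)$ record the total non-triviality of $E^+$ and $E^-$ modulo $2$, summed over the components of $S_f$.

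The first step is to express $\tau^1(f) + \tau^2(f)$ as a characteristic number of $M^{4k}$. Since $TC$ is trivial on a circle, $w_1(E^+|_C) + w_1(E^-|_C) = w_1(TM)|_C$, and summing gives
\[
\tau^1(f) + \tau^2(f) \equiv \langle w_1(TM),\, [S_f]_{\Z/2}\rangle \mod{2}.
\]
The second step is to compute the Poincar\'e dual of $[S_f]_{\Z/2}$ via the Thom polynomial of the fold singularity for a map to $\R^2$. Since the target has trivial tangent bundle, this dual is a polynomial in the classes $w_i(TM)$, and substituting reduces Theorem~\ref{introsignunori} to the vanishing of a specific mod-$2$ Stiefel--Whitney number of $M^{4k}$. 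The third step is to verify this vanishing by Wu's formula, exploiting $v_1 = w_1$ and the Steenrod action $Sq^1(w_j) \equiv w_1 w_j + (j-1) w_{j+1}$ modulo $2$; for instance, at $j=4k-1$ the coefficient $j-1$ is even, which is precisely the sort of parity identity that should collapse the relevant characteristic number.

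The main obstacle I anticipate is the precise identification in Step~2: fold maps have singular sets of smaller codimension than the corresponding Thom--Boardman stratum, so the Thom polynomial computation is more delicate than in the fully generic case and requires the specific normal-form structure of folds. Once this polynomial is in hand, Step~3 should be a routine Steenrod algebra manipulation. As a backup, one can lift $f$ to the orientation double cover $\tilde f\co \tilde M\to \R^2$, apply Theorem~\ref{introsign} to $\tilde f$ (observing $\sigma(\tilde M)=0$ since the free orientation-reversing deck involution forces the intersection form to be hyperbolic on the $\pm 1$-eigenspaces of the induced action on $H^{2k}(\tilde M;\R)$), and match the lifted invariants $t(\tilde f),\tau(\tilde f)$ to $\tau^1(f)+\tau^2(f)$; however, this matching is likely to reduce to essentially the same combinatorial identity on the components of the singular set.
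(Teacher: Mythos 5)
Your proposal takes a genuinely different route from the paper. The paper proves the statement by first computing the group $\CC ob(2,4k-2) \cong {\mathfrak N}_{4k}^{2|\chi}\oplus\Z_2\oplus\Z_2^{6k-3}$ (Theorem~\ref{explsikbathmunori}), constructing explicit fold maps representing generators (the Morse function bundles $\tilde\va_{j,q,\cdot}$, fiber bundles over $S^2$, and $f_C\colon\CP^{2k}\#\CP^{2k}\to\R^2$), and then verifying the identity $\tau^1+\tau^2\equiv 0$ on each generator, using that $\tau^1+\tau^2$ is a $\Z_2$-valued cobordism invariant. You instead propose to reduce the statement to the vanishing of a Stiefel--Whitney number, which if carried through is actually more direct and proves a pointwise class identity rather than just a cobordism-level one. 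Both approaches are legitimate; the paper's simultaneously yields the cobordism group computation needed for its other theorems.

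Your Step~1 is correct: since $TM|_C=TC\oplus E^-\oplus E^+$ over each singular circle $C$ and $TC$ is orientable, one indeed gets $\tau^1(f)+\tau^2(f)=\sum_C\langle w_1(E^-)+w_1(E^+),[C]\rangle=\langle w_1(TM),[S_f]\rangle$, and one can check this is consistent with the paper's definition of $\tau^1,\tau^2$ as the $O(\la)$-- and $O(q+1-\la)$--twisting invariants (and with the computations in Lemma~\ref{immgencong}). Your worry in Step~2 is actually a non-issue: a fold map is by definition transverse to the $\Sigma^1$ stratum (it merely avoids $\Sigma^{1,1}$, which is $0$-dimensional here), so the Thom--Porteous formula applies verbatim and gives $[S_f]^{\mathrm{PD}}=w_{4k-1}(TM)$ (not $\bar w_{4k-1}$; the determinantal formula $\Delta^{(m-n+1)}_{(1)}(\bar w)$ collapses to $w_{m-n+1}$). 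So the statement reduces to $w_1w_{4k-1}[M^{4k}]=0$ for every closed $4k$-manifold.

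The genuine gap is in Step~3. Observing that $Sq^1(w_{4k-1})=w_1w_{4k-1}+(4k-2)w_{4k}=w_1w_{4k-1}$ because $4k-2$ is even is true but circular: by Wu's theorem $Sq^1(x)[M]=v_1x[M]=w_1x[M]$ for $x\in H^{4k-1}$, so this identity gives back exactly what you want to prove. The missing ingredient is the following. On a closed $4k$-manifold the Wu classes $v_j$ vanish for $j>2k$, and $Sq^i(v_j)=0$ for $i>j$, so in the Wu decomposition $w_{4k-1}=\sum_{i+j=4k-1}Sq^i(v_j)$ only the term $i=2k-1$, $j=2k$ survives, i.e.\ $w_{4k-1}=Sq^{2k-1}(v_{2k})$. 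Then $w_1w_{4k-1}=Sq^1(w_{4k-1})=Sq^1Sq^{2k-1}(v_{2k})=0$, since $Sq^1Sq^{2k-1}=0$ by the Adem relation ($2k-1$ is odd). With this supplied, your characteristic-class proof is complete; the parity point you singled out is indeed the fortuitous fact (contrast $4k+2$-manifolds, where $w_{4k+1}=Sq^{2k}(v_{2k+1})$ and $Sq^1Sq^{2k}=Sq^{2k+1}$ gives $w_1w_{4k+1}=v_{2k+1}^2$, which can be nonzero --- e.g.\ on the Klein bottle --- consistent with the theorem being stated only in dimension $4k$). The double-cover backup you mention would require a careful match between $\tau(\tilde f)$ on $\tilde M$ and $\tau^1(f)+\tau^2(f)$ on $M$ that is not obvious and is not needed once Step~3 is filled in.
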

Again, for the precise definitions of the invariants $\tau^1$ and $\tau^2$, which measure the twisting of the normal bundle of the singular set of a fold map, see 
Section~\ref{geomcobinv}.
As we mentioned, the relation between the Euler 
characteristic of the source manifold of a map  and the singularities of the map is quite well-known. However, 
the relation between the signatures of  oriented source manifolds and their singular maps is still much less understood.
Existing results in  \cite{ ES03,  Oz02, SY06}  establish  formulas about the signatures of oriented  manifolds and their singular maps
but for only $4$-dimensional source manifolds. In \cite{TY06} formulas about non-oriented $4$-manifolds and their singularities are obtained.

We obtain Theorems~\ref{introsign} and \ref{introsignunori}
by considering cobordisms of fold  maps (see Definition~\ref{cobdef}). 
It is well-known that
two closed manifolds are cobordant if and only if
the corresponding Stiefel-Whitney numbers (and Pontryagin numbers in the oriented case) 
coincide. We have the  notion of a {\it cobordism of singular maps} \cite{Szucs1, RSz}, and 
if there are two given  maps, one can ask about
easily applicable 
procedures, 
namely checking {\it cobordism invariants}, to decide whether the two fold maps are cobordant. 
Fold maps restricted to their singular sets are immersions, and 
we introduced and used geometric cobordism invariants of fold maps in \cite{Kal7, Kal4, Kal6}
 in terms
of these immersions with prescribed normal bundles
which describe the image of the fold singular set in the target manifold 
together with more detailed informations about the ``twisting'' of
the tubular neighborhood of the singular set in the source manifold (we summarize these results in Section~\ref{geomcobinv}).
More precisely, the fold singularities
$$(x_1,\ldots,x_{n+q})  \mapsto \left(x_1,\ldots,x_{n-1}, \sum_{i = n}^{n + \la -1}
-x_i^2  + \sum_{i = n+ \la}^{n+q} x_{i}^2 \right),$$
where $0 \leq \lambda \leq (q+1)/2$,
form an $(n-1)$-parameter family of  the index $\lambda$ Morse singularities $$(x_n,\ldots,x_{n+q})  \mapsto \sum_{i = n}^{n + \la -1}
-x_i^2  + \sum_{i = n+ \la}^{n+q} x_{i}^2.$$ Roughly speaking this Morse singularity is a map $\va \co \R^{q+1} \to \R$ and has a symmetry group
$G_{\lambda}$ acting on $\R^{q+1}$ and $\R$, respectively. The singular locus $S_{\lambda}$ of a fold map $f \co M \to \R^n$ consisting of fold singular points with $\lambda$ many ``$-$'' signs 
is an $(n-1)$-dimensional submanifold of $M$ and the normal bundle of the immersion $f|_{S_{\lambda}} \co S_{\lambda} \to \R^n$ can be induced 
from the line bundle $l^1_{\lambda} \to BG_{\lambda}$ which we get by taking the action of $G_{\lambda}$ on the target $\R$ of the Morse singularity $\va$.
By assigning this immersion to the fold map and by looking at cobordisms, we get a homomorphism
$\csi_{\lambda}$ from the cobordism group of fold maps into the cobordism group $$\imm \left({l^1_{\lambda}},n \right),$$ which denotes the
cobordism group of codimension $1$  immersions into $\R^n$ 
whose normal bundles are induced from $l^1_{\lambda}$.
These homomorphisms $\csi_{\lambda}$ are the geometric cobordism invariants which we introduced in \cite{Kal7, Kal4, Kal6}.

In this paper, we define further invariants 
which describe the cobordism class of the source manifold together with  its mapping { away from}  
 the singular set as well. Namely, for a given  fold map, we take the {\it stably framed cobordism class} of its source manifold (see Section~\ref{ujcob}), which notion
was studied in general by Koschorke \cite{Ko}. More precisely, on the source manifold of a fold map $f \co M^{n+q} \to \R^n$ with oriented singular set
we obtain a stable (partial) framing simply by  pulling back  the parallelization of the 
tangent space $T\R^n$ by the modified differential 
$df + \al \co TM \oplus \ep^1 \to T\R^n$, where $\al \co \ep^1 \to   T\R^n$ is just a homomorphism having full rank  near the singular set of $f$
and consequently $df + \al$ is an epimorphism, see  
Section~\ref{existfrfold} and also \cite{An3, Sa1}.
Looking at the cobordisms of these stable framings, 
we obtain our homomorphism 
 $\si^{}_{n,q}$ which maps the cobordism class of the fold map $f$ to the cobordism class of the stably framed source manifold $M^{n+q}$.

Finally,
by using a result of Ando \cite[Theorem~0.1]{An3} about the existence of %even codimensional 
fold maps, we show that  our invariants give complete cobordism
invariants of 
(framed) fold maps (see Definition~\ref{framedcobdef}, Theorem~\ref{invarithm},
Corollary~\ref{foldcobcor}). 

Namely, for $n>0, q \geq 0$, let us denote 
  by $\Im_{n,q}$ the homomorphism
 $$\left(\si^{}_{n,q} , \csi_{{1}}, \ldots, \csi_{\lfloor (q+1)/2 \rfloor}\right),$$
 which maps the  cobordism class of a (framed) fold map into the direct sum of the 
 cobordism group of stably framed manifolds and the 
 groups $\imm \left({l^1_{\lambda}},n \right)$ for $1 \leq \lambda \leq (q+1)/2$ as described above.
 
 Then we obtain the following result.
\begin{thm}\label{hominj}
The homomorphism
$\Im_{n,q}$
is injective. 
\end{thm}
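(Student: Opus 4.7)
The plan is to show that if a framed fold map $f \co M^{n+q} \to \R^n$ satisfies $\Im_{n,q}([f]) = 0$, then $f$ bounds a framed fold cobordism. The argument unpacks the vanishing of each coordinate of $\Im_{n,q}$ into concrete geometric data and then reassembles that data into a fold map on a cobordism via Ando's h-principle.

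First, for each $\la$ with $1 \leq \la \leq \lfloor (q+1)/2 \rfloor$, the hypothesis $\csi_\la(f) = 0$ produces a compact $n$-manifold $V_\la$ with $\partial V_\la = S_\la(f)$, together with a generic immersion $V_\la \looparrowright \R^n \times [0,1]$ that extends $f|_{S_\la(f)}$ at $t=0$, is pushed off $t=1$, and whose normal bundle is induced from $l^1_\la$. Simultaneously, $\si^{}_{n,q}(f) = 0$ supplies a compact $(n+q+1)$-manifold $W$ with $\partial W = M$ carrying a stable framing that extends the stable framing of $TM$ induced by $f$ through the $df+\al$ construction of Section~\ref{existfrfold}.

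Next I would arrange, inside $W$, disjoint proper embeddings of the cobordisms $V_\la$ meeting $\partial W = M$ transversally along $S_\la(f)$, with normal bundle structure compatible with the $l^1_\la$-reduction recorded by $\csi_\la$. On a closed tubular neighborhood $N$ of $\sqcup_\la V_\la$ in $W$ one writes down the canonical fold map to a neighborhood of the immersed $\sqcup_\la V_\la \subset \R^n \times [0,1]$, using the universal parametrized family of index-$\la$ Morse germs; the $l^1_\la$-reduction is precisely the data needed to globalize this germ coherently over each stratum. On the complement of $N$ in $W$ one instead needs a submersion to $\R^n \times [0,1]$, and the obstruction to the existence of an epimorphism $TW \oplus \ep^1 \to T(\R^n \times [0,1]) \oplus \ep^1$ there is killed by the stable framing of $W$, again through the $df+\al$ mechanism recalled in Section~\ref{existfrfold}.

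The final step is to promote this bundle-theoretic data to a genuine fold map via Ando's existence theorem \cite[Theorem~0.1]{An3} in its relative form (rel $\partial$). The principal difficulty is to check that on $\partial N$ the fold-bundle map prescribed by the local normal form on $N$ matches the submersive bundle map produced on $W \setminus N$ by the stable framing, and that the two ensuing stable framings of $TM$ agree along $\partial N \cap M$; both compatibilities are forced by the very definitions of $\csi_\la$ and $\si^{}_{n,q}$, since the stable framing of $M$ used in $\si^{}_{n,q}(f)$ is the one built from $f$ near and away from the singular strata. Granting this compatibility, Ando's theorem yields a framed fold map $F \co X \to \R^n \times [0,1]$ with $\partial X = M$ and $F|_M = f$, exhibiting $[f] = 0$ in the cobordism group of framed fold maps, which proves the injectivity of $\Im_{n,q}$.
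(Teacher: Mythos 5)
Your overall toolkit is the right one — unpack the two vanishing conditions into geometric data and reassemble via Ando's relative h-principle — but the order in which you do this creates a genuine gap that the paper's argument is specifically designed to avoid.

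The crucial step you gloss over is the claim that the immersion null-cobordisms $V_\la$ (with $\partial V_\la = S_\la(f)$ and normal bundle induced from $l^1_\la$) can be properly embedded in a stably $(n-1)$-framed null-cobordism $W$ of $M$ obtained independently from $\si_{n,q}(f)=0$, with normal bundle in $W$ compatible with the $\va$-bundle structure. Nothing forces this: $W$ is an abstract framed cobordism carrying no map to $\R^n\times[0,1]$, and its stable framing (hence the would-be kernel bundle $\eta \subset TW\oplus\ep^1_W$) need not degenerate along any copy of $V_\la$ in the fold-like way one needs, nor need the ambient normal bundle of such an embedding agree with the $(q+1)$-dimensional ``source'' part of the $\va$-bundle that $\csi_\la$ knows about. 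You then declare the compatibility of the fold-bundle data on $\partial N$ with the submersive data on $W\setminus N$ to be ``forced by the very definitions,'' but that is exactly the nontrivial matching that would have to be proved; the two pieces of data live on structures (the $\va$-bundle null-cobordism vs.\ the framed null-cobordism of $M$) that have no given relation to each other once you detach them from $f$.

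The paper's proof avoids all of this by constructing the cobordism \emph{around} the $\csi_\la$ data first, rather than importing that data into a pre-chosen $W$. It sets $W = Q\times[0,2\ep]\cup\bigcup_\la D\mathcal O_\la$, where $D\mathcal O_\la$ is the disk-bundle total space of the $\va$-bundle null-cobordism; this $W$ comes already equipped with a framed fold map $F$ to $\R^n\times[0,1)$ whose restriction near the far boundary component $P$ has only definite fold singularities. Only then is $\si_{n,q}(f)=0$ used, and only to cap off $P$: after a short homotopy correction (via the vanishing of $\pi_k(V_{n+1}(\R^{n+q+2}),V_n(\R^{n+q+1}))$ for $k\leq n+q$) the framing induced by $\va(F,\varrho,r)$ makes $W$ a stably $(n-1)$-framed cobordism from $Q$ to $P$, so $P$ bounds a stably framed manifold, which one glues on to get $X$ with $\partial X=Q$ and a global fiberwise epimorphism extending $\va(F,\varrho,r)$. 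The relative form of Ando's theorem (the paper's Theorem~\ref{relexifold}, built on \cite[Theorem~2.1]{An3}) is then applied to $X$ with $C$ a neighborhood of $Q$ inside $W$, and the compatibility hypothesis is automatic because the epimorphism over that neighborhood literally \emph{is} $\va(F,\varrho,r)$. To repair your proof you would essentially be forced into this reversed order: use the $\csi_\la=0$ data to build a fold cobordism ``halfway,'' and only afterward invoke $\si_{n,q}=0$ on the new boundary.
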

It is important to note that Theorems~\ref{introsign} and \ref{introsignunori} and also the result of \cite{Chess} show dependencies between 
the cobordism invariants $\csi_0, \ldots, \csi_{\lfloor (q+1)/2 \rfloor}$ and $\si_{n,q}$.

We prove Theorem~\ref{hominj} by applying the h-principle of Ando \cite{An3} in a short and simple way.
We have the analogous results when we consider orientations on the manifolds and on the cobordisms as well.
When we arrive to applications and computations, what simplifies everything is that the line bundles $l^1_{\lambda}$ are in fact trivial bundles in almost all of the cases.
For example, we obtain
\begin{thm}\label{introcobori}
For $k \geq 1$, 
the cobordism group %$\CC ob_{{}}^{O}(2,4k-2)$ 
of fold maps of oriented $4k$-dimensional manifolds into $\R^2$
is isomorphic to $$\Omega_{4k}^{2|\chi} \oplus \Z_2^{4k-2},$$ 
where 
$\Omega_{4k}^{2|\chi}$ is the cobordism group of closed oriented $4k$-dimensional manifolds with even Euler characteristic.
\end{thm}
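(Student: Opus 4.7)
The strategy is to apply Theorem~\ref{hominj} in the case $n=2$, $q=4k-2$ and to identify the image of the injective total invariant $\Im_{2,4k-2}=(\si_{2,4k-2},\csi_1,\ldots,\csi_{2k-1})$ with $\Omega_{4k}^{2|\chi}\oplus\Z_2^{4k-2}$. Since Theorem~\ref{hominj} grants injectivity for free, the task splits into (a) computing the targets in which each component lands, and (b) exhibiting fold-map representatives realizing every combination of invariants.

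For the source-manifold summand, $\si_{2,4k-2}$ records the stable partial framing of $TM$ built from the modified differential $df+\al$ of Section~\ref{existfrfold}. In the oriented category this datum reduces, up to cobordism, to the oriented cobordism class of $M$ subject to the existence of a fold map $M\to\R^2$; by Levine's theorem \cite{Lev1} this existence is equivalent to $\chi(M)\equiv 0\bmod 2$, so the image of $\si_{2,4k-2}$ is exactly $\Omega_{4k}^{2|\chi}$, and every class there is realized by a Levine fold representative.

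For the remaining summands, I use that $q=4k-2$ is even, so for every $\lambda\in\{1,\ldots,2k-1\}$ one has $\lambda\neq(q+1)/2$ and hence the symmetry group acts trivially on the target line of the associated Morse singularity, making $l^1_\lambda$ the trivial line bundle over $BG_\lambda=BO(\lambda)\times BO(q+1-\lambda)$. In the oriented setting, each $\imm(l^1_\lambda,2)$ together with the $\tau_\lambda$-information encoded through the stable partial framing contributes a $\Z_2\oplus\Z_2$: one factor detected by the mod-$2$ double-point number $t_\lambda$ of the immersion $f|_{S_\lambda}\co S_\lambda\to\R^2$, and the other by the mod-$2$ twisting $\tau_\lambda$ of the normal bundle of $S_\lambda$ in $M$, as discussed in Section~\ref{geomcobinv}. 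Summing over $\lambda=1,\ldots,2k-1$ yields the $\Z_2^{4k-2}$ factor.

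The main obstacle is surjectivity: given $[M]\in\Omega_{4k}^{2|\chi}$ and any $2k-1$ pairs $(t_\lambda,\tau_\lambda)\in\Z_2\oplus\Z_2$, one must exhibit a fold map realizing them. The construction starts from a Levine fold map on a chosen representative of $[M]$ and then applies localized fold-surgery moves supported in small coordinate balls of $M$, one move per generator, that flip exactly one chosen $t_\lambda$ or $\tau_\lambda$ by $1\bmod 2$ while preserving every other invariant and the oriented cobordism class of $M$. The necessary local models are analogues of the constructions in \cite{Kal7,Kal4,Kal6}, where the key point is to verify that the local surgery trace is itself an oriented fold cobordism supported in a ball. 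Combining this surjectivity with the injectivity from Theorem~\ref{hominj} produces the asserted isomorphism $\Omega_{4k}^{2|\chi}\oplus\Z_2^{4k-2}$.
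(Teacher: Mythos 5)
Your overall strategy (apply the injectivity of $\Im_{2,4k-2}$ and identify its image) matches the paper's, but there are two genuine gaps.

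First, the claim that the stable $1$-framing datum in $\CC_{4k}^O(2)$ ``reduces, up to cobordism, to the oriented cobordism class of $M$'' is not a formal reduction: in general the stably $(n-1)$-framed cobordism class of a manifold is \emph{not} determined by its underlying oriented cobordism class. That the forgetful map $\CC_{4k}^O(2)\to\Omega_{4k}$ is injective with image $\Omega_{4k}^{2|\chi}$ is precisely Koschorke's computation quoted as Proposition~\ref{koschizom}(ii) (from \cite[Proposition~7.17 and Theorems~12.8, 19.39]{Ko}), and it is the crux of identifying the first summand; your argument asserts it without invoking it.

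Second, the realization step is not what the paper does and is left at the level of a slogan. The paper does not perform ``localized fold-surgery moves supported in small coordinate balls of $M$.'' Instead it takes \emph{disjoint unions} with the explicit Morse function bundles $\varphi_{j,q,i_j}$ and $\varphi_{j,q,e_j}$ of Section~\ref{mobuimm}: these are fold maps of $S^{q+1}$-bundles over $S^1$, which are additional closed manifolds rather than local modifications of $M$. Two points are essential here and missing from your proposal. (i) Proposition~\ref{directsumincobgroup} shows that $\sum_j\al_j$ embeds $\Z_2^{4k-2}$ as a direct summand $B\subset\CC ob^O(2,4k-2)$ because the matrix of $(\csi_1^O,\ldots,\csi_{2k-1}^O)\circ\sum_j\al_j$ is upper triangular with $1$'s on the diagonal --- note it is \emph{not} diagonal, so one cannot flip ``exactly one chosen $t_\lambda$ or $\tau_\lambda$ while preserving every other invariant'' by a single generator; the triangular structure suffices but your stronger claim would need a separate construction. (ii) The source manifolds $Q_{j,q,i_j}$, $Q_{j,q,e_j}$ are orientable sphere bundles over $S^1$ and hence null-cobordant, so $\si^O_{2,4k-2}$ vanishes on $B$; this is exactly the hypothesis $\im(\psi\circ\iota_B)=0$ needed to apply Lemma~\ref{groupisomlemma}(1) and split off the $\Omega_{4k}^{2|\chi}$ factor cleanly. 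Without verifying that your proposed local surgeries preserve the (stably framed) cobordism class of the source, the summand decomposition does not follow from injectivity plus a surjectivity statement alone.
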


For the unoriented case, we have
\begin{thm}\label{introcob}
For $k \geq 1$, the  cobordism group %$\CC ob_{{}}^{}(2,4k-2)$ 
of fold maps of  $4k$-dimensional manifolds into $\R^2$
is isomorphic to $${\mathfrak {N}}_{4k}^{2|\chi} \oplus \Z_2  \oplus \Z_2^{6k-3},$$
where 
${\mathfrak {N}}_{4k}^{2|\chi}$ is the  cobordism group of closed unoriented $4k$-dimensional manifolds with even Euler characteristic.
\end{thm}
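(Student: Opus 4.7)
The plan is to parallel the proof of the oriented case (Theorem \ref{introcobori}), applying the unoriented analog of Theorem \ref{hominj} with $n=2$, $q=4k-2$. This yields an injection of the fold cobordism group into
\[
\mathcal{F} \oplus \bigoplus_{\lambda=1}^{2k-1} \imm(l^1_\lambda, 2),
\]
where $\mathcal{F}$ is the cobordism group of closed unoriented $4k$-manifolds equipped with the stable partial framing constructed in Section \ref{existfrfold}. The main task is to compute each summand, verify that every element of the product lies in the image, and reassemble the total into the claimed $\mathfrak{N}_{4k}^{2|\chi} \oplus \Z_2 \oplus \Z_2^{6k-3}$.

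For $\mathcal{F}$, the partial framing amounts to a splitting $TM \oplus \ep^1 \cong \xi \oplus \ep^2$ with $\xi$ of rank $4k-1$. By obstruction theory, the primary obstruction to such a splitting vanishes automatically, since $TM \oplus \ep^1$ has the nowhere-zero section coming from the $\ep^1$ summand. The secondary obstruction is the mod $2$ Euler class $w_{4k}(TM) \cdot [M] = \chi(M) \pmod 2$; hence the splitting exists iff $\chi(M) \equiv 0 \pmod 2$, in agreement with Levine's criterion \cite{Lev1} for the existence of a fold map into $\R^2$. Using Koschorke's framework \cite{Ko}, I would then identify $\mathcal{F}$ with a group whose underlying manifold cobordism class lies in $\mathfrak{N}_{4k}^{2|\chi}$, with any remaining framing ambiguity controlled by standard low-dimensional stable homotopy. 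For each $\lambda \in \{1,\ldots,2k-1\}$, $\imm(l^1_\lambda, 2) \cong \pi_2^S(T l^1_\lambda)$ by Pontrjagin--Thom, and a case analysis on when $l^1_\lambda$ is trivial (governed by the parities of $\lambda$ and $q-\lambda$ via the action of the symmetry group $G_\lambda$ on the target $\R$ of the Morse singularity) computes each individual factor; assembling these Thom-spectrum contributions produces the announced decomposition.

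The main obstacle is verifying surjectivity: every element of the identified target must be realized by an actual fold map, not merely by an abstract immersion plus partial framing. This is accomplished by invoking Ando's h-principle \cite{An3} to construct a fold map with any prescribed compatible combination of stable partial framing on the source and immersed singular data in the target, paralleling the explicit constructions used in the oriented case (Theorem \ref{introcobori}) and in Section \ref{ujcob}.
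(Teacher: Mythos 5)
Your setup (inject via the unoriented analogue of Theorem~\ref{hominj} into $\mathcal{F}\oplus\bigoplus_{\lambda=1}^{2k-1}\imm(l^1_\lambda,2)$, identify $\mathcal{F}\cong\mathfrak{N}_{4k}^{2|\chi}\oplus\Z_2$ via Koschorke, and identify each $\imm(l^1_\lambda,2)\cong\Z_2^3$) matches the paper's framing. Note, though, that for $q=4k-2$ the coefficient bundle $l^1_\lambda$ is trivial for \emph{all} $1\le\lambda\le 2k-1$, since $\lambda<(q+1)/2$ — the parity case analysis you sketch only becomes relevant for odd $q$ at $\lambda=(q+1)/2$.

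The genuine gap is in your surjectivity step. ``Invoke Ando's h-principle to construct a fold map with any prescribed compatible combination'' does not close the argument: the h-principle gives that $\si_{2,q}$ is onto $\CC_{2+q}(2)$, but it does not let you prescribe the $\csi_\lambda$ independently of the stably framed bordism class. The reason the oriented case is easy is that the Morse-function-bundle summand $B\cong\Z_2^{4k-2}$ constructed in Section~\ref{mobuimm} maps to $0$ in $\CC^O_{4k}(2)$ (the total spaces are sphere bundles over $S^1$, hence null-cobordant), so case (1) of Lemma~\ref{groupisomlemma} applies. In the unoriented case this fails: the image of the summand $B\cong\Z_2^{6k-3}$ under $\si_{2,q}$ is a nonzero $\Z_2$ inside $\CC_{4k}(2)\cong\mathfrak{N}_{4k}^{2|\chi}\oplus\Z_2$ (coming from the single double point of the immersed singular set of $\tilde\va_{1,q,e^1_1}$, via Proposition~\ref{koschizomexplained}), so case (1) of Lemma~\ref{groupisomlemma} is unavailable and, a priori, you could get either $\mathfrak{N}_{4k}^{2|\chi}\oplus\Z_2^{6k-2}$ or $\mathfrak{N}_{4k}^{2|\chi}\oplus\Z_2^{6k-3}$ as the answer.

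To rule out the smaller group, the paper argues by contradiction using the explicit fold map $f_C\colon\CP^{2k}\#\CP^{2k}\to\R^2$ with $t(f_C)=0$, $\tau(f_C)=1$: assuming $(\psi,\pi_B)$ is \emph{not} surjective, the forgetful image of $[f_C]$ would have to be a linear combination of the $\iota([\va_{j,4k-2,i_j}])$ and $\iota([\va_{j,4k-2,e_j}])$, and tracking the (non-additive, but additive on $\im\iota$) quantity $\tilde\tau$ together with $t$ produces incompatible parities. Without an argument of this kind — or some other explicit realization of the missing $\Z_2$ — your proposal does not distinguish between the two candidate answers, and so does not actually prove the theorem.
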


To prove our Theorems~\ref{introsign} and \ref{introsignunori},
we also construct representatives of the generators of these cobordism groups of fold maps. %  in Theorems~\ref{introcobori} and \ref{introcob}.
In the  case of Theorem~\ref{introcobori}, the direct summand $\Z_2^{4k-2}$ is generated by the classes of such fold maps which are Morse function bundles over 
immersed $1$-dimensional manifolds in $\R^2$. In \cite{Kal7} we introduced these Morse function bundles to detect direct summands of the 
cobordism groups of fold maps into $\R^n$.
We recall these results in a detailed form in Section~\ref{mobuimm} in the special case of $n = 2$. 
To find representatives of generators of the direct summand $\Omega_{4k}^{2|\chi}$, we need two things.
At first, we apply  \cite[Theorem~3]{AK}  to get fiber bundles over $S^2$  which we map then into $\R^2$ by constructing some fold maps. 
The invariants $t$, $\tau$ and the signature of the source manifold are typically zero for these maps.
Secondly, 
we construct a specific fold map  $f_C \co \CP^{2k} \# \CP^{2k} \to \R^2$, which we use to get more non-zero values for $\tau$ and the signature. 

In the case of Theorem~\ref{introcob}, the direct summand $\Z_2  \oplus \Z_2^{6k-3}$ is generated by the classes of Morse function bundles and the map $f_C$ similarly to 
the oriented case.
The direct summand ${\mathfrak {N}}_{4k}^{2|\chi}$ is generated by classes of fold maps of fiber bundles over $S^2$ 
similarly to the oriented case but now we apply results of \cite{Br69} to get these bundles.

So
having  invariants which encode geometric information, computing the cobordism groups, constructing representatives of the generators
of the cobordism groups, and checking the values of the invariants on them lead to formulas about geometric properties --- in this paper we implement this concept
in the case of oriented $4k$-dimensional manifolds ($k \geq 1$) and their fold maps into the plane. 
%(see Theorem~\ref{invarithm}, Proposition~\ref{explsikbathm}, Section~\ref{mobuimm}, and Proposition~\ref{sign}). 
In this way, we obtain 
Theorem~\ref{introsign}. For the unoriented case, we similarly obtain Theorem~\ref{introsignunori}.

Other results about cobordisms of fold and singular maps can be found for example in 
\cite{An, An6, EST07, Ik, IS03, Kal, Kal2, KT12, RSz, Sad09, Sad4, Sa6, SST10, Szucs1, Szucs4, ST19}.

The paper is organized as follows.
In Sections~\ref{preli}, \ref{kob} and \ref{ujcob}, we give and study the basic definitions,
in Sections~\ref{Complete_invariants_of_cobordisms_of_framed_fold_maps} and \ref{foldmapsplanePoinHopf} we state our main results,
in Section~\ref{mobuimm} we recall necessary  results about Morse function bundles and
in Section~\ref{completebiz}, we prove our results.

\subsection*{Acknowledgements}
The author would like to thank Prof.\ Yoshifumi Ando for the interesting and useful conversations
especially about his Theorem~\ref{relexifold}.
The author  also would like to thank the anonymous referee for the helpful comments, which improved the paper.

\subsection*{Notations}
In this paper the symbol $\amalg$ denotes the disjoint union. 
For any number $x$ the symbol $\lfloor x \rfloor$ denotes the greatest
integer $i$ such that $i \leq x$.
%$\ga^1$ denotes the universal line bundle over $\RP^{\infty}$,
For an integer $k \geq 0$ the symbol $\ep^k_X$ denotes the trivial $k$-dimensional vector bundle over the space $X$.
For a pair of spaces $(X, A)$ and a vector bundle 
$\csi$ over $X$ the symbol $\csi|_A$ denotes 
the  bundle induced by the inclusion $A \subset X$.
For a smooth manifold $X$ the symbol $TX$ denotes its tangent space.
The symbol $\pi_n^s(X)$ (resp.\ $\pi_n^s$) denotes the $n$th stable homotopy group of the space $X$ (resp.\ spheres).
The symbol $\imm({\eta^k},n)$ denotes
the cobordism group of codimension $k$ immersions into $\R^n$ 
whose normal bundles are induced from the vector bundle $\eta^k$ (this
group is isomorphic to $\pi_{n}^s(T\eta^k)$, where $T\eta^k$ is the Thom space of $\eta^k$, see \cite{We}).
The symbol $\Omega_{m}$ (resp.\ ${\mathfrak {N}}_{m}$)
denotes
the cobordism group of oriented (resp.\ unoriented) $m$-dimensional manifolds.
If $W$ is a manifold with boundary $X$, then
the tangent space of the submanifold $X$ is a codimension $1$ subbundle of 
the restriction $TW|_X$.
Any tangent vector $v \in TW|_p$, where $p \in X$, such that $v$ and $TX|_p$ generate $TW|_p$ 
is called a normal section of $X$ at $p$. All manifolds are of class $\mathcal C^{\infty}$.

\section{Preliminaries}\label{preli}

\subsection{Maps with fold singularities}
 
Let $n \geq 1$ and $q \geq 0$. Throughout this section 
let $Q$ and $N$ be smooth manifolds of dimensions $n+q$ and $n$ 
respectively. Let $p \in Q$ be a singular point of 
a smooth map $f \co Q\to N$. The smooth map $f$  has a {\it fold 
singularity of index $\la$} at the singular point $p$ if we can write $f$ in some local coordinates around $p$  
and $f(p)$ in the form 
\[  
f(x_1,\ldots,x_{n+q})=\left(x_1,\ldots,x_{n-1}, \sum_{i = n}^{n + \la -1}
-x_i^2  + \sum_{i = n+ \la}^{n+q} x_{i}^2 \right)
\] 
for some $0 \leq \la \leq q+1$ (the {\it index} $\la$ is well-defined if
we consider that $\la$ and $q+1-\la$ represent the same absolute index). 

%The smooth map $f$  has a {\it cusp
%singularity of index $\la$} at the singular point $p$ if we can write $f$ in some local coordinates around $p$  
%and $f(p)$ in the form 
%\begin{equation}\label{cuspform}
%f(x_1,\ldots,x_{n+q})=(x_1,\ldots,x_{n-1},  x_{n-1} x_{n+q} + x_{n+q}^3 + \sum_{i = n}^{n + \la -1}
%-x_i^2  + \sum_{i = n+ \la}^{n+q-1} x_{i}^2 )
%\end{equation}
%for some $0 \leq \la \leq q$ (the {\it absolute index} $\la$ is well-defined if
%we consider that $\la$ and $q-\la$ represent the same absolute index). 

%
%A smooth map $f \co Q^{n+q} \to N^{n}$ is called a {\it fold map} ({\it cusp map}) if $f$ has only 
%fold (resp.\ fold or cusp) singularities. Note that for $n=2$ a map is stable if and only if it is a  cusp map.

A fold singularity is
%The smooth map $f$ 
%  has 
  a {\it definite fold
singularity}  if $\la = 0$ or $\la = q+1$
and it is an {\it indefinite fold singularity of index $\la$}
 if $1 \leq \la \leq q$.

Let $S_{\la}^{}(f) \subset Q$ denote the set of fold singularities of index $\la$ of $f$.
Note that $S_{\la}^{}(f) = S_{q+1-\la}^{}(f)$.
 Let $S_f^{}$ denote the set $\bigcup_{\la} S_{\la}^{}(f)$ of all the fold singularities of $f$.
Then $S_f$ is a smooth ${(n-1)}$-dimensional submanifold of
$Q$.
If $f \co Q \to N$ is a generic fold map, then 
the restriction of  $f$ to $S_f^{}$ is a generic
 codimension one  immersion  into the target manifold $N$. 
Each connected component of the manifold $S_f^{}$  has its own index $\la$ if
we consider that $\la$ and $q+1-\la$  represent the same index.

Since every fold map is in general position after a small perturbation, 
and we study maps under the equivalence relation {\it cobordism}
(see Definition~\ref{cobdef}),
in this paper we can restrict ourselves to studying fold maps which are 
in general position.
Without mentioning we presume that a fold map $f$ is generic.

For a fold map $f \co Q \to N$ and for an index $0 \leq \la \leq \lfloor q/2 \rfloor$
the normal bundle of the codimension one immersion $f |_{S_{\la}^{}(f)} \co S_{\la}^{}(f) \to N$ 
 has a {\it canonical} orientation or framing 
(i.e.\ trivialization) by identifying the
fold singularities of index $\la$ 
with   
the fold germ 
$(x_1,\ldots,x_{n+q}) \mapsto \left(x_1,\ldots,x_{n-1}, \sum_{i = n}^{n + \la -1}
-x_i^2  + \sum_{i = n+ \la}^{n+q} x_{i}^2 \right)$.

\begin{defn}[Framed fold map]\label{cooricuspmap}
We say that a fold map $f \co Q \to N$ is {\it framed} if
\begin{enumerate}[\rm (1)]
\item
the normal bundle of the codimension $1$  immersion $f|_{S_f^{}} \co S_f^{} \to N$ 
%of  $S_f^{1,j}$ 
is oriented 
%(i.e., trivialization of the normal bundle) 
so that
for each index $0 \leq \la  \leq \lfloor q/2 \rfloor$
the orientation of the normal bundle of the immersion 
$f|_{S_{\la}^{}(f)}$
coincides with the {\it canonical} orientation and
\item
for odd $q$
the normal bundle of the immersion $f|_{S_{(q+1)/2}^{}(f)}$ is orientable and it is oriented. 
\end{enumerate}
\end{defn}

\begin{rem}
\begin{enumerate}[\rm (i)]
\item
If we have a framed fold map $f$ into an oriented manifold $N$, then since
the fold singular set $S_f^{}$ 
is immersed into $N$, the manifold $S_f$ 
has an induced orientation given by this framing and the orientation of the target
manifold $N$. 
\item
A fold map $f \co Q \to N$ can be  framed if and only if 
the cokernel bundle of the differential $df \co TQ \to f^*TN$ restricted to $TQ|_{S_f}$ is a trivial line bundle.
\end{enumerate}
\end{rem}

\begin{defn}[Oriented fold map]\label{orifoldmap}
A fold map $f \co Q \to N$ is {\it oriented} if there is a chosen consistent orientation
of every fiber at their regular points, i.e., if the
kernel of the differential of the restriction $f|_R$ is an oriented bundle, where
$R$ denotes the set of regular points of $f$. 
\end{defn}

For example, a  fold map  between oriented manifolds is naturally oriented.
In this paper, we deal with fold maps into Euclidean spaces. For such a map an orientation is equivalent
to an orientation of its source manifold (we fix orientations for Euclidean spaces). 

Note that there exist oriented fold maps $f \co Q^{n+q} \to \R^{n}$ with odd $q$, which cannot be
framed in the sense of Definition~\ref{cooricuspmap} (for example for $n=3$ and $q=1$, see \cite{Sae03}).

\subsection{Existence of framed fold maps}\label{existfrfold}

We will study and recall results about the relation between 
\begin{itemize}
\item
existence of continuous fiberwise epimorphisms $TQ \oplus \ep^1_Q \to T\R^n$,
\item
having $n$ linearly independent continuous sections in  $TQ \oplus \ep^1_Q$ and
\item
existence of framed fold maps $Q \to \R^n$.
\end{itemize}

Fix the standard Riemannian metric on $\R^n$. When there is a Riemannian metric on $Q$, we always
consider the Riemannian metric on $TQ \oplus \ep^1_Q$ by defining $\ep^1_Q$ to be perpendicular to $TQ$.
%On the space $\R^n \oplus \R$ we consider the Riemannian metric 
%$\langle (x_1, x_2), (y_1, y_2) \rangle = \langle x_1, y_1 \rangle + \langle x_2, y_2 \rangle$, where
%$x_{1}, y_1 \in \R^n, x_{2}, y_2 \in \R$.
%\begin{prop}
At first observe that if there is a given Riemannian metric on $Q$, then 
having a fiberwise epimorphism $TQ \oplus \ep^1_Q \to T\R^n$ is equivalent to having $n$ linearly independent sections 
in $TQ \oplus \ep^1_Q$. 
%\end{prop}
%\begin{proof}
%Suppose $\va \co TQ \oplus \ep^1_Q \to T\R^n$ is a fiberwise epimorphism. 
%Its kernel is the bundle $\csi$ over $Q$. The Riemannian metric on $Q$ gives an orthogonal complement vector bundle to $\csi$, denote this by $\eta$.
%The restriction of $\va$ to $\eta$ is an isomorphism between $n$-dimensional vector bundles. This isomorphism gives $n$ linearly independent
%vectors in each of the fibers of $TQ \oplus \ep^1_Q$. 
%Conversely, if we have $n$ linearly independent sections of $TQ \oplus \ep^1_Q$, then 
%define a fiberwise homomorphism by mapping the linearly independent vectors in the fibers of $TQ \oplus \ep^1_Q$ 
%to the standard basis of $\R^n$. This defines a fiberwise epimorphism.
%\end{proof}

Following \cite[Lemma 3.1]{Sa1} and \cite[Lemma 3.1]{An3},
given a framed fold map $g \co Q \to \R^n$,
we will construct a fiberwise epimorphism $\va \co TQ \oplus \ep^1_Q \to T\R^n$, which will
depend on the given framed fold map $g$, a chosen Riemannian metric $\varrho$ on $Q$ and a chosen $r > 0$, where this $r$ also depends on 
$g$ and $\varrho$.

So if $g \co Q \to \R^n$ is a framed fold map and $\varrho$ is a Riemannian metric on $Q$, then let 
$r  = r(g, \varrho) > 0$ be the radius of a compact tubular neighborhood $N_r(S_g)$ of the singular set $S_g$ in $Q$.
 If we have these $g$, $\varrho$ and $r$, we define 
$$\va(g, \varrho, r) \co TQ \oplus \ep^1_Q \to T\R^n$$ in the following way.

Consider the differential $dg$ of $g$ as a homomorphism $TQ \to g^*T\R^n$
and take the commutative diagram
\begin{equation*}
\begin{CD}
TQ|_{S_g} @> i_* >> TQ @> dg >>  g^*T\R^n @> g_* >> T\R^n \\
@VVV @VVV @VVV @VVV \\
S_g @> i >> Q @> = >> Q @> g >> \R^n
\end{CD}
\end{equation*}
where $i \co S_g \to Q$ is the embedding of the singular set.
 Then $dg \circ i_*$ maps $TQ|_{S_g}$ into a codimension $1$ subbundle of $g^*T\R^n|_{S_g}$. Having the pulled-back standard Riemannian metric
on $g^*T\R^n$ we take the orthogonal complement 
$\left(\im dg \circ i_* \right)^{\perp}$ in $g^*T\R^n|_{S_g}$. This is the cokernel bundle of $dg$ over $S_g$,
which is a trivial bundle $\ep^1_{S_g}$ since $g$ is a framed fold map. 
We denote this cokernel bundle by 
$
\th_g \lra S_g.
$
Of course $\th_g \subset g^*T\R^n$ and $\th_g \cong \ep^1_{S_g}$.
Moreover we have a fiberwise monomorphism 
\begin{equation}\label{triv_monom}
g_*|_{\th_g } \co \th_g  \to T\R^n
\end{equation}
of this trivial bundle since it is a subbundle of $g^*T\R^n$, and $g^*T\R^n$ is sent by
 the pull-back homomorphism $g_* \co g^*T\R^n \to T\R^n$ fiberwise isomorphically into $T\R^n$.
Then over $S_g$ we have a fiberwise epimorphism
$$dg |_{S_g} + {\mathrm {id}}_{\th_g}  \co TQ|_{S_g} \oplus \th_g \to g^*T\R^n|_{S_g},$$
which we denote by $\tilde \va$. 
Of course $\tilde \va$ depends on 
the Riemannian metric on $\R^n$ but that is fixed so 
$\tilde \va$ depends really only on the map $g$.
We just have to extend this $\tilde \va$ somehow over the entire $TQ \oplus \ep^1_Q$ and then to compose with 
$g_*$ to get the claimed $\va$.

So take a Riemannian metric $\varrho$ on $Q$ and take $r = r(g, \varrho)$.
Let us map a point in  $N_r(S_g)$ to the closest point in $S_g$, this defines a map $p \co N_r(S_g) \to S_g$.
Then take the commutative diagram
\begin{equation*}
\begin{CD}
p^* \left( g^*T\R^n|_{S_g} \right) @> p_* >> g^*T\R^n|_{S_g}   \\
@VVV @VVV  \\
N_r(S_g) @> p >> S_g 
\end{CD}
\end{equation*}
where $p^* \left( g^*T\R^n|_{S_g} \right) $ is canonically isomorphic to 
$g^*T\R^n|_{N_r(S_g)}$ so that this isomorphism is the identity over $S_g$
since $T\R^n$ is a trivial bundle with the standard trivialization.
Then, by the isomorphism and the diagram, the pull-back by $p$ of our trivial cokernel bundle
  $\th_g$ in $g^*T\R^n|_{S_g}$ 
yields an 
$\ep^1_{N_r(S_g)}$ subbundle in $g^*T\R^n|_{N_r(S_g)}$.

Denote the bundle projection $\ep^1_{N_r(S_g)}  \to N_r(S_g)$ by $\pi$.
Then
extend $\tilde \va$ over $N_r(S_g)$ to get a fiberwise epimorphism 
$\tilde \va \co TQ|_{N_r(S_g)} \oplus \ep^1_{N_r(S_g)} \to g^*T\R^n$
 by the formula 
$$%\begin{equation}%\label{N(S_g)details_epi}
(v, w) \mapsto  dg|_{N_r(S_g)}(v) +  \al(\pi(w))  w
$$%\end{equation}
for $v \in TQ|_{N_r(S_g)}$ and $w \in \ep^1_{N_r(S_g)}  \subset g^*T\R^n|_{N_r(S_g)}$,
where 
$\al \co N_r(S_g) \to [0, 1]$ is equal to $r$ minus the distance from $S_g$ multiplied by $1/r$.
Observe that $\al(\pi(w)) = 0$ if $w$ is over the boundary of $N_r(S_g)$. We suppose that $\al$ is smooth near $S_g$.

Hence $\tilde \va$ extends continuously 
over the entire $TQ \oplus \ep^1_Q$
by applying only  $dg$ over $Q - N_r(S_g)$. So we obtain the fiberwise epimorphism 
$$\tilde \va(g, \varrho, r)  \co TQ \oplus \ep^1_Q \to g^*T\R^n,$$ 
\begin{equation}\label{fiberwiseepi}
(v, w) \mapsto 
\left\{
\begin{array}{cc}
dg(v) +  \al(\pi(w)) w & \mbox{\ \ \ \ \ if $(v, w) \in (TQ \oplus  \ep^1_Q)|_{N_r(S_g)}$},  \\
  dg (v) & \mbox{\ \ \ \ \ if $(v, w) \in (TQ  \oplus \ep^1_Q)|_{Q - N_r(S_g)}$}.
\end{array}
\right.
\end{equation}
The  restriction of $\tilde \va(g, \varrho, r)$ to $TQ$ is equal to the differential $dg$.
It is easy to see that $\tilde \va(g, \varrho, r)$ is smooth outside of a neighborhood of the boundary $\del  N_r(S_g)$. 
Finally we obtain our claimed  homomorphism by
$$
 \va(g, \varrho, r) = g_* \circ \tilde  \va(g, \varrho, r).
$$
Obviously $\va(g, \varrho, r) $  covers the map $g$.

Conversely, if we have any fiberwise epimorphism $\va  \co TQ \oplus \ep^1_Q \to T\R^n$, then
there is a framed fold map $g \co Q \to \R^n$. More precisely, we have the following.

\begin{thm}[Theorem~3.2 in \cite{An3}]\label{Andothm}
Let $n \geq 2$ and $q \geq 0$.
Assume there is a fiberwise epimorphism $\psi \co TQ^{n+q} \oplus \ep^1_Q \to T\R^n$ over a continuous map $g \co Q^{n+q}  \to \R^n$.
Then there is a framed fold map $f \co Q^{n+q} \to \R^n$ homotopic to $g$.
\end{thm}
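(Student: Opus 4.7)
The plan is to apply Ando's h-principle for framed fold maps, which is the content of Theorem~0.1 in \cite{An3}. The hypothesis that $\psi$ is a fiberwise epimorphism $TQ \oplus \ep^1_Q \to T\R^n$ covering $g$ is precisely the formal (bundle-theoretic) avatar of a framed fold map: the restriction $\psi|_{TQ}$ has corank at most one at every point, and the trivial summand $\ep^1_Q$ supplies a canonical trivialization of the cokernel of $\psi|_{TQ}$ on the locus where it degenerates. This is exactly the algebraic structure carried by the differential of a framed fold map together with its extension $\va(g, \varrho, r)$ constructed in (\ref{fiberwiseepi}) in the preceding discussion.

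First I would reformulate the hypothesis in Ando's jet-theoretic language. Using the standard trivialization of $T\R^n$, the homomorphism $\psi$ is equivalent to an $n$-tuple of everywhere linearly independent sections of $TQ \oplus \ep^1_Q$, and this tuple determines a section of the jet bundle parametrizing framed fold $1$-jets over $g$. The construction of $\va(g, \varrho, r)$ earlier in Section~\ref{existfrfold} shows conversely that every genuine framed fold map gives rise to such a formal section, and the assignment is continuous in the appropriate sense. Next I would invoke Ando's h-principle: the inclusion of the space of genuine framed fold maps into the space of formal framed fold maps (i.e.\ fiberwise epimorphisms $TQ \oplus \ep^1_Q \to T\R^n$ together with the canonical cokernel trivialization) is a weak homotopy equivalence. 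In particular, on $\pi_0$, the section corresponding to $\psi$ is homotopic through formal framed fold maps to one coming from a genuine framed fold map $f \co Q \to \R^n$; projecting this homotopy to the level of underlying continuous maps produces a homotopy from $g$ to $f$, which is the desired conclusion.

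The main obstacle is Ando's h-principle itself, whose proof occupies the bulk of \cite{An3} and uses Gromov--Eliashberg-style microextension and convex integration techniques applied to the fold singularity stratum together with its framing datum. Since we treat Ando's theorem as a black box, the work on our end is essentially bookkeeping: matching the framing condition in Definition~\ref{cooricuspmap} with Ando's formulation of the canonical cokernel trivialization, using the hypothesis $n\geq 2$ to ensure that the h-principle applies (the case $n=1$ of Morse functions requires separate treatment), and confirming that the resulting fold map is indeed framed in our sense because the trivialization of $\ep^1_Q$ is preserved along the homotopy.
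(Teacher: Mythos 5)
This statement is not proved in the paper at all: it is quoted verbatim as Theorem~3.2 of Ando's paper \cite{An3}, and the paper simply cites it as an external black box. Your proposal takes exactly the same stance --- invoke Ando's h-principle and translate the hypothesis into its formal jet-theoretic avatar --- so it matches the paper's treatment, with only a minor bookkeeping slip (you attribute the h-principle to Theorem~0.1 of \cite{An3}, whereas the paper's statement is Theorem~3.2 there; Theorem~0.1 is the existence criterion the author cites elsewhere in the introduction).
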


\begin{rem}\label{framinghomot}
If the bundle $TQ \oplus \ep^1_Q$ has $n$ linearly independent sections 
$e_1, \ldots, e_n$
and there is a given Riemannian metric $\varrho$ on $Q$,
then mapping the orthogonal complement of the subspace spanned by the sections to $0$ and mapping the sections
to the standard framing of $T\R^n$ we get a fiberwise epimorphism $\psi \co TQ \oplus \ep^1_Q \to T\R^n$ over some map $g \co Q \to \R^n$.
By Theorem~\ref{Andothm}, from $\psi$ we get a framed fold map $f \co Q \to \R^n$
homotopic to $g$. 
This framed fold map $f$ also gives a fiberwise epimorphism $\va(f, \varrho, r) \co TQ \oplus \ep^1_Q \to T\R^n$ for some $r >0$
and hence $n$ linearly independent sections 
$e_1', \ldots, e_n'$ in $TQ \oplus \ep^1_Q$.
As the proof of \cite[Theorem~3.2]{An3} shows, we get $f$ by 
constructing from $\psi$ an appropriate section $s$ of $Q$ into the $2$-jet space and by
 applying \cite[Theorem~2.1]{An3} which gives the homotopy from $g$ to $f$. 
This homotopy is the result of a homotopy in the formal $2$-jet space,
which gives a homotopy of  fiberwise epimorphisms from $\psi$ to $\va(f, \varrho, r)$ as well, which gives 
a homotopy from $e_1, \ldots, e_n$ to $e_1', \ldots, e_n'$.
\end{rem}

We will use an easy modification of the previous statement for the ``relative case'' as follows.
As usual, for a smooth map $f \co Q \to N$ and $x \in Q$, $y \in N$, 
we express a $2$-jet in $J^2_{x, y}(Q, N)$ as
a pair $(a, b)$ where
$a \in \mathrm{Hom}(T_xQ, T_yN)$, $b \in \mathrm{Hom}(S^2(T_xQ), T_yN)$ and
$S^2(T_xQ)$ denotes the $2$-fold symmetric product of $T_xQ$, see \cite[pages 32--33]{An3}.

\begin{thm}\label{relexifold}
Let $k \geq 2$ and $m \geq k$.
Let $\varrho$ be a Riemannian metric on the $m$-dimensional manifold $W$ and 
let $C$ be a closed subset of  $W$.
Let 
$f \co W \to \R^k$ be a continuous map such that the restriction of $f$ 
to a neighborhood $U$ of $C$ is  
smooth and has only definite fold singularities. Let $\psi \co TW \oplus \ep^1_W \to T\R^k$ be 
a fiberwise epimorphism over $f$, and suppose
that $\psi$ is equal to $\va(f, \varrho, r)$  for some $r>0$ over $U$. 
Then, there exists a framed fold map $g \co W \to \R^k$,
which coincides with $f$ on $C$.
\end{thm}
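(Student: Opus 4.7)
The plan is to reduce this to the absolute case (Theorem~\ref{Andothm}) via the standard relativization of Ando's h-principle. The key observation is that on the neighborhood $U$ of $C$ the pair $(f|_U, \psi|_U)$ already constitutes a genuine solution: $f|_U$ is a framed fold map by hypothesis (with only definite folds), and $\psi|_U = \va(f, \varrho, r)$ is precisely the fiberwise epimorphism associated to it in \eqref{fiberwiseepi}. Hence the only modifications to be performed lie away from $C$.

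Following Remark~\ref{framinghomot}, I would fix nested open sets $C \subset V_0 \subset \overline{V_0} \subset V_1 \subset \overline{V_1} \subset U$ and recall that Ando's proof factors through the $2$-jet space: the epimorphism $\psi$ is first lifted to a formal $2$-jet section $s \co W \to J^2(W, \R^k)$ over $f$, and then \cite[Theorem~2.1]{An3} is invoked to homotope $s$ through formal sections to the holonomic $2$-jet $j^2 g$ of a framed fold map $g$. I would build the section $s$ by taking $s = j^2(f|_U)$ on $V_1$ and using the standard formal lift of $\psi$ on $W \setminus \overline{V_0}$, interpolating the two over the annular region $V_1 \setminus \overline{V_0}$ in the $2$-jet fiber above the common underlying $1$-jet. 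This interpolation is possible because $\psi|_U$ and $df|_U$ have the same restriction to $TW$ by the very formula \eqref{fiberwiseepi}, so both candidate sections share the same $1$-jet component on $U$. The resulting $s$ is formal on $W$ and already holonomic on $V_1$.

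Next I would apply \cite[Theorem~2.1]{An3} in its relative form: the formal-to-holonomic deformation can be chosen stationary on the open subset $V_0$ where $s$ is already holonomic. The terminal section of this deformation is $j^2 g$ for a framed fold map $g \co W \to \R^k$, and by construction $g = f$ on $V_0 \supset C$, which yields the claim.

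The main obstacle is verifying that \cite[Theorem~2.1]{An3} genuinely admits this relative formulation, i.e.\ that Ando's deformation can be made the identity on an open set where the formal section is already the $2$-jet of a framed fold map. This is implicit in Ando's construction, whose modifications of the singular set are all localized to the subset where $s$ fails to be holonomic; on any open set where $s$ is already $j^2$ of a framed fold map with only definite folds, the construction acts trivially. The hypothesis that the singularities on $U$ are definite folds is exactly what keeps the interpolation over $V_1 \setminus \overline{V_0}$ inside the differential relation defining framed fold maps, since the space of admissible formal $2$-jets over a fixed $1$-jet is convex in the definite-fold regime (the structure group $G_0$ being orthogonal).
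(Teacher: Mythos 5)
Your high-level strategy — construct a formal $2$-jet section over $f$ that is already holonomic near $C$, then invoke the relative version of Ando's Theorem~2.1 — is the same as the paper's, and your awareness that the relative applicability of \cite[Theorem~2.1]{An3} is the crux is correct. However, your proposal has a genuine gap in the middle step: there is no canonical ``standard formal lift of $\psi$'' to interpolate with. The $1$-jet part is forced (it is $\psi|_{TW}$, pulled back along $f$ to give $\Psi$), but the $2$-jet part --- the symmetric bilinear form $\tilde\be \co S^2(TW) \to f^*(T\R^k)$ --- has to be \emph{constructed}, and that construction is the entire technical content of the proof. You cannot defer it to a named ``standard lift'' and then interpolate; either your hypothetical lift is the one constructed carefully so as to agree with $j^2 f$ over $U$ (in which case no interpolation is needed), or it is some other lift (in which case your convexity argument has no footing, because on the locus $V$ where the kernel of $\Psi$ sits inside $TW$ a generic nondegenerate form on the kernel need not be positive definite, and the full space of nondegenerate symmetric forms is not convex --- it has components indexed by signature).

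What the paper actually does, and what is missing from your proposal, is: (1) establish that the cokernel bundle of $\Psi$ over $V$ is trivial via two Whitney product computations; (2) take the positive definite symmetric form on $\csi|_S \otimes \csi|_S$ provided by the definite fold singularities of $f|_U$; (3) extend it over all of $V$ to a nonsingular symmetric map $\be \co \csi|_V \otimes \csi|_V \to \ep^1_V$, the key point being that this extension is possible because the symmetry group of the definite fold singularity is the full orthogonal group $O(m-k+1)$ (you cite $G_0$ being orthogonal, but you invoke it to justify convexity of an interpolation, whereas the paper invokes it to justify extendability of a section of a certain bundle --- these are different arguments); and (4) further extend $\be$ to $\tilde\be \co S^2(TW) \to f^*(T\R^k)$ so that $(\Psi, \tilde\be)$ equals $j^2 f$ over $U$. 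With that done, there is a single globally defined formal section $s(p) = (p, f(p), \Psi_p, \tilde\be_p)$ which is holonomic over $U$, and \cite[Theorem~2.1]{An3} gives the framed fold map $g$ coinciding with $f$ on $C$ in one stroke. Your nested-open-set / interpolation scaffolding is thus either superfluous or unjustified; the real work is in step (3), which your proposal does not address.
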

\begin{proof}
Let $\csi$ be the kernel of $\psi$. Then in the same way as in the proof of \cite[Theorem~3.2]{An3}
we obtain a $(k-1)$-dimensional manifold $V$ in $W$ such that
$\csi|_p \subset T_pW$ exactly at the points $p \in V$.
It follows that for the singular set $S$ of $f|_U$ we have $S \subset V$.
Also  we have that the rank of $\psi|_{T_pW}$ is equal to $k$ for $p \in W - V$ and it is equal to 
$k-1$ for  $p \in V$. Clearly $\psi|_{TW}$ induces a homomorphism 
$\Psi \co TW \to f^*(T\R^k)$ by  pulling back by $f$.
Then 
for $p \in U$ the restriction $\Psi|_{T_pW}$ is equal to 
the differential $df \co T_pW \to f^*(T\R^k)$ because $\psi|_{T_pW} = f_* \circ df|_{T_pW}$.

Observe that the cokernel bundle of $\Psi$ in $f^*(T\R^k)|_V$  at the points of $V$ is the trivial bundle:
the sequence
$$
0 \lra \csi|_V \lra TW|_{V} \xrightarrow{\text{$\Psi |_V$}} f^*(T\R^k)|_V \lra {\mathrm {coker}} \thinspace \Psi |_V \lra 0
$$
\begin{equation*}
\begin{CD}
0 @>>> \csi|_V @>>> TW|_{V} @>  \Psi |_V >> f^*(T\R^k)|_V @>>> {\mathrm {coker}} \thinspace \Psi |_V @>>> 0
\end{CD}
\end{equation*}
is obviously exact and then by the Whitney product formula 
we have
$$
w_1 ( {\mathrm {coker}} \thinspace \Psi |_V )  + w_1 ( TW|_{V} ) - w_1 (  \csi|_V ) = w_1 ( f^*(T\R^k)|_V  ). 
$$
This implies that 
\begin{equation}\label{coker_triv}
w_1 ( {\mathrm {coker}} \thinspace \Psi |_V )  =  w_1 (  \ep^1_V ),
\end{equation}
because $$w_1 ( f^*(T\R^k)|_V  ) + w_1 (  \csi|_V ) - w_1 ( TW|_{V} )  = w_1 (  \ep^1_V )$$ 
again by the Whitney product formula
since the sequence 
$$0 \lra  \csi|_V \lra TW|_{V} \oplus  \ep^1_V \lra  f^*(T\R^k)|_V \lra 0$$
is exact. So by (\ref{coker_triv}) the bundle ${\mathrm {coker}} \thinspace \Psi |_V$ is trivial.
Now we want to define a non-singular symmetric map $\be \co \csi|_V \otimes \csi|_V \to \ep^1_V$ whose
target $\ep^1_V \subset f^*(T\R^k)|_V$ is just the cokernel bundle of $\Psi$ at the points of $V$. 
For the singular set $S \subset V$ we have already such a map: the positive definite symmetric form 
 given by the definite fold singularities of $f_U$. We denote this by 
 $$\be \co \csi|_S \otimes \csi|_S \to \ep^1_S.$$
 This extends to the entire $\csi|_V \otimes \csi|_V$ as a non-singular symmetric map because
 the symmetry group of the definite fold singularity is the full orthogonal group. Let us denote  this extension by $\be$.
Now this $\be$ can be extended to a homomorphism $\tilde \be \co S^2(TW) \to f^*(T\R^k)$ 
%where $S^2(TW)$ denotes the $2$-fold symmetric product of $TW$, 
so that over $U$ the pair of homomorphisms
$\left( \Psi, \tilde \be \right)$ is equal to the $2$-jet of $f$.

So if we define the section $s \co W \to \Omega^{m-k+1, 0}(W, \R^k)$, where 
$\Omega^{m-k+1, 0}(W, \R^k)$ denotes the union of regular and fold jets in the $2$-jet space $J^2(W, \R^k)$,
 as
$$s(p) = \left(p, f(p), \Psi_p, \tilde \be_p \right),$$
then 
by applying  \cite[Theorem~2.1]{An3} we get the statement.
\end{proof}

\section{Cobordism of maps}\label{kob}

\subsection{Cobordism of framed and oriented maps}

\begin{defn}[Cobordism]\label{cobdef} 
Two fold maps $f_i \co Q_i \to \R^n$, $i=0,1$,  
of closed $({n+q})$-dimensional manifolds $Q_i^{n+q}$  are
%into an $n$-dimensional manifold $N^n$ are  
{\it cobordant} if there exists a fold map 
$$F \co X \to \R^n \times [0,1]$$ of a compact $(n+q+1)$-dimensional 
manifold $X$ such that
\begin{enumerate}[(i)]
\item
$\del X = Q_0 \amalg Q_1$ and %(resp. $Q_0^{n+q} \amalg -Q_1^{n+q}$) and
\item
${F|}_{Q_0 \x [0,\ep)}=f_0 \x
{\mathrm {id}}_{[0,\ep)}$ and ${F|}_{Q_1 \x (1-\ep,1]}=f_1 \x 
{\mathrm {id}}_{(1-\ep,1]}$, where 
$Q_0 \x [0,\ep)$
 and $Q_1 \x (1-\ep,1]$ are small collar neighborhoods of $\del X$ with the
identifications $Q_0 = Q_0 \x \{0\}$ and $Q_1 = Q_1 \x \{1\}$. 
%where $\ep$  refers to a small positive number. 
\end{enumerate}
We call the map $F$ a {\it cobordism} between $f_0$ and $f_1$. 

When the fold maps $f_i \co Q_i \to \R^n$, $i=0,1$,  are oriented,
we say that they are {\it oriented cobordant} (or shortly {\it cobordant} if it is clear from the context) if
they are cobordant in the above sense via an oriented fold map $F$, such that the orientations are compatible on the boundary of $X$.
\end{defn} 

This clearly defines an equivalence relation on the set of  fold maps
of closed $({n+q})$-dimensional manifolds into $\R^n$. The equivalence classes are called {\it cobordism classes}.
We denote 
 the set of  cobordism classes of  fold maps 
 %of closed $({n+q})$-dimensional manifolds into $\R^n$
by $\CC ob_{{}}^{}(n,q)$.
%When $N^n=\R^n$, we denote it by $\CC ob_{f}^{}(n+q,-q)$ (resp. $\CC ob_{f}^{O}(n+q,-q)$).
For the oriented version an upper index ``$O$'' applies and we write   $\CC ob_{{}}^{O}(n,q)$.
By taking disjoint union of maps, the sets $\CC ob(n,q)$ and $\CC ob^{O}(n,q)$ become  groups as one can see easily.

\begin{defn}[Framed cobordism]\label{framedcobdef}
Two framed fold maps $f_i \co Q_i \to \R^n$, $i = 0, 1$,   
of closed $({n+q})$-dimensional manifolds $Q_i$  
%into an $n$-dimensional manifold $N^n$ are  
are {\it  framed cobordant} if 
they are cobordant in the sense of Definition~\ref{cobdef}
by a framed fold map $F \co X \to \R^n \times [0,1]$ 
such that the orientation of the normal bundle of the immersion 
$F|_{S_F^{}} \co S_F^{} \to \R^n \times [0,1]$ restricted 
to $\del X \cap S_F^{}$ coincides with that of 
 the immersions $f_i|_{S_{f_i}^{}} \co S_{f_i}^{} \to \R^n \times \{ i \}$,  $i=0,1$.
Similarly two oriented framed fold maps are \emph{oriented framed cobordant}
if they are framed cobordant so that this cobordism is also an oriented cobordism.
\end{defn}

We denote the corresponding framed cobordism groups by  $\CC ob_{fr}^{}(n,q)$ and $\CC ob_{fr}^{O}(n,q)$.
For  $q$ even the group 
$\CC ob_{fr}^{}(n,q)$ is naturally isomorphic to
$\CC ob_{{}}^{}(n,q)$ and 
the group 
$\CC ob_{fr}^{O}(n,q)$ is naturally isomorphic to
$\CC ob_{{}}^{O}(n,q)$ by the natural forgetful map from the framed cobordism group to the unframed one.

\subsection{Cobordism invariants of fold maps}\label{geomcobinv}

We introduced and used geometric invariants of cobordisms of fold maps \cite[Section~2]{Kal7}, namely the homomorphisms
\[
\csi_{{\la}}  \co \CC ob_{{}}^{}(n,q) \to \imm \left({\ep^1_{B(O(\la) \x O(q+1-\la))}},n\right)
\]
for $0 \leq \la < (q+1)/2$ and
\[
\csi_{(q+1)/2}  \co \CC ob_{{}}^{}(n,q) \to \imm({l^1},n)
\]
for $q$ odd and $\la=(q+1)/2$, where $l^1$ is an appropriate line bundle. 
These homomorphisms are defined as follows.
Restricting the fold map $f \co Q^{n+q} \to N^n$  to a tubular neighborhood of its index $\la$
fold singular set $S_{\la}^{}(f)$  we get a bundle over $S_{\la}^{}(f)$ with fiber the mapping
$$\va \co (x_1,\ldots,x_{q+1}) \mapsto  \sum_{i = 1}^{\la}
-x_i^2  + \sum_{i = \la + 1}^{q+1} x_{i}^2.$$

%By \cite{Jan, Wa} this bundle is a locally trivial bundle 
% in a sense with a fiber $\phi$
% and an appropriate group of automorphisms of $\phi$ as structure group.
 By \cite{Jan, Wa}
the structure group of this bundle can be reduced to a maximal compact subgroup, namely to the 
group $O(\la) \x O(q+1-\la)$ in the case of $0 \leq \la < (q+1)/2$ and to the group 
generated by the group $O\left(\frac{q+1}{2}\right) \x O\left(\frac{q+1}{2}\right)$ and the transformation 
$ T = 
\begin{pmatrix}
0 & I_{(q+1)/2} \\ I_{(q+1)/2} & 0 
\end{pmatrix}
$ in the case of $q$ odd and $\la = (q+1)/2$, see, for example, \cite{Sa1}.
We denote this latter group by $\left\langle O\left(\frac{q+1}{2}\right) \x O\left(\frac{q+1}{2}\right), T \right\rangle$.

The $\va$-bundle over $S_{\la}^{}(f)$ has a ``source'' bundle and also a ``target'' bundle part over $S_{\la}^{}(f)$. 
This results from the source $\R^{q+1}$ and the target $\R$ of $\va$ since
the structure group of $\va$ acts on these. So we have a $(q+1)$-dimensional vector bundle and also a $1$-dimensional vector bundle over 
$S_{\la}^{}(f)$ and $\va$ maps fiberwise between them.

It follows that this $1$-dimensional vector bundle over $S_{\la}^{}(f)$, where
$0 \leq \la \leq (q+1)/2 $, 
can be induced from the trivial line bundle
$$\ep^1 \to B(O(\la) \x O(q+1-\la))$$ for $\la \neq (q+1)/2$ and from an appropriate line bundle
$$l^1 \to B\left\langle O\left(\frac{q+1}{2}\right) \x O\left(\frac{q+1}{2}\right), T \right\rangle$$ 
for $q$ odd and $\la = (q+1)/2$.

Now, restricting the fold map $f$ to its fold singular set $S_{\la}^{}(f)$ of index $\la$ we get an immersion and
 the homomorphisms $\csi_{{\la}}$ and $\csi_{(q+1)/2}$
map
 the cobordism class $[f]$ to the cobordism class of this immersion   with normal bundle induced from
the line bundle
$\ep^1 \to B(O(\la) \x O(q+1-\la))$ or $l^1 \to B\left\langle O\left(\frac{q+1}{2}\right) \x O\left(\frac{q+1}{2}\right), T \right\rangle$, respectively.

\subsubsection{Invariants of oriented fold maps}
For oriented fold maps, we have the analogous statements 
but we have to consider the subgroup of the automorphisms of $\va \co \R^{n+q} \to \R$ whose elements act   preserving 
the orientations of $\R^{n+q}$ and $\R$ simultaneously or reversing the orientations simultaneously.
That is,
we consider the subgroup $S(O(\la) \x O(q+1-\la))$ of orientation preserving transformations of the group
$O(\la) \x O(q+1-\la)$ 
and the trivial line bundle $\ep^1 \to BS(O(\la) \x O(q+1-\la))$
in the case of $0 \leq \la < (q+1)/2$,
and the appropriate subgroup denoted by $S{\left\langle O\left(\frac{q+1}{2}\right) \x O\left(\frac{q+1}{2}\right), T \right\rangle}$
of the group $\left\langle O\left(\frac{q+1}{2}\right) \x O\left(\frac{q+1}{2}\right), T \right\rangle$
and the corresponding line bundle
$\tilde l^1 \to BS{\left\langle O\left(\frac{q+1}{2}\right) \x O\left(\frac{q+1}{2}\right), T \right\rangle}$ 
in the case of $q$ odd and  $\la = (q+1)/2$.
So in the case of oriented fold maps, %(e.g., in the case  of oriented manifolds $Q^{n+q}$ and $N^n$), 
we have the  homomorphisms 
\[
\csi_{{\la}}^{O}  \co \CC ob_{{}}^{O}(n,q) \to \imm\left({\ep^1_{BS(O(\la) \x O(q+1-\la))}},n\right)
\]
for $0 \leq \la < (q+1)/2$ and
\[
\csi_{(q+1)/2}^{O}  \co \CC ob_{{}}^{O}(n,q) \to \imm({\tilde l^1},n)
\]
for $q$ odd and $\la=(q+1)/2$, just like in the case of non-oriented fold maps.
We used these homomorphisms in \cite{Kal4, Kal7} to describe cobordisms of fold maps. 

\subsubsection{Invariants of framed fold maps}
For $0 \leq \la < (q+1)/2$,
clearly there is the similar framed cobordism invariant
\[
\csi_{{\la}}  \co \CC ob_{fr}^{}(n,q) \to \imm\left({\ep^1_{B(O(\la) \x O(q+1-\la))}},n\right).
\]
For framed fold maps when $q$ is odd and $\la = (q+1)/2$ we have to consider only that largest subgroup of 
$\left\langle O\left(\frac{q+1}{2}\right) \x O\left(\frac{q+1}{2}\right), T \right\rangle$ whose elements act trivially on the target $\R$ of $\va$.
%If $(q+1)/2$ is odd, then 
This is $O\left(\frac{q+1}{2}\right) \x O\left(\frac{q+1}{2}\right)$ because $T$ acts as multiplication by $-1$ on the target $\R$ of $\va$. So
we have the homomorphism 
\[
\csi_{(q+1)/2}  \co \CC ob_{fr}^{}(n,q) \to \imm\left({\ep^1_{B\left(O\left(\frac{q+1}{2}\right) \x O\left(\frac{q+1}{2}\right)\right)}},n\right).
\]
%If $(q+1)/2$  is even, then all elements of $O({}rac{q+1}{2}) \x O({}rac{q+1}{2})$  and also $T$ act trivially so in this case 
%the line bundle $l^1$ is trivial and the framing gives a trivialization hence we have
%\[
%\csi_{(q+1)/2}  \co \CC ob_{{}, fr}(n,q) \to \imm({\ep^1_{B\langle O({}rac{q+1}{2}) \x O({}rac{q+1}{2}), T \rangle}},n).
%\]
It is easy to get the analogous homomorphisms for oriented framed fold maps as well.

Since the group $\imm ( \ep^1_X, n)$, where $X$ is a space, is isomorphic to 
$\pi_{n-1}^s \oplus \pi_{n-1}^s(X)$, all these homomorphisms $\csi_{\la}$, where $0 \leq \la \leq (q+1)/2$, map 
from a framed cobordism group into a direct sum of stable homotopy groups.
The first coordinate map of $\csi_{\la}$ mapping to  $\pi_{n-1}^s$ will be denoted by 
$$t_{\la} \co \CC ob_{fr}^{}(n,q) \to \pi_{n-1}^s$$ and
the second will be
$$\tau_{\la} \co \CC ob_{fr}^{}(n,q) \to \pi_{n-1}^s(BG),$$
 for the suitable subgroup $G$ of the orthogonal group $O(q+1)$.
 
 When $n=2$, 
the sum for $\la \geq 0$ %and for $\la \geq 1$, respectively, 
of these homomorphisms will be used, i.e.\ take
$$\sum_{0 \leq \la \leq (q+1)/2} t_{\la}$$
and denote it by $t$ and
take 
$$\sum_{0 \leq \la \leq (q+1)/2} \tau_{\la}$$
and denote it by $\tau$.

So we have two homomorphisms $$t \co \CC ob_{fr}^{}(2,q) \to \Z_2,$$
because $\pi_{1}^s = \Z_2$
and $$\tau \co \CC ob_{fr}^{}(2,q) \to \Z_2 \oplus \Z_2$$
because $\pi_{1}^s(BG) = \Z_2 \oplus \Z_2$ for any $1 \leq \la \leq (q+1)/2$ and $G$ at hand, and in
the case of $\la = 0$ we consider  $\pi_{1}^s(B(O(0) \x O(q+1))) = \{ 0 \} \oplus \Z_2$, which we consider as a subgroup of $\Z_2 \oplus \Z_2$.
We will denote the two components of $\tau$ by $\tau^1$ and $\tau^2$.
 
The analogous homomorphisms for the case of oriented framed maps are denoted the same way. In that case we have
$$t \co \CC ob_{fr}^{O}(2,q) \to \Z_2$$ and
$$\tau \co \CC ob_{fr}^{O}(2,q) \to  \Z_2$$
because then $\pi_{1}^s(BG) =  \Z_2$ for any $1 \leq \la \leq (q+1)/2$ and $G$ at hand, and 
in the case of $\la = 0$ we have  $\pi_{1}^s(BS(O(0) \x O(q+1))) = \{ 0 \}$, which we consider as  a subgroup of $\Z_2$.

To simplify the notation, if $f$ is a fold map,  we will refer to $t([f])$ and $\tau([f])$ as $t(f)$ and $\tau(f)$, respectively.
When $n=2$, the value $t(f)$ is just the number of double points mod $2$ of the immersion into $\R^2$ of the singular set of the generic fold map $f$.

\begin{rem}\label{nfrori}
Clearly in the case of non-framed (but possibly oriented or non-oriented) fold maps into $\R^2$ of {\it even} codimension $q$ we have the analogous 
homomorphisms $t$ and $\tau$
because 
such fold maps and their cobordisms can be naturally framed.

We will  consider non-framed oriented fold maps to $\R^2$ of {\it odd} codimension $q$ as well.
In that case we also have the analogous homomorphisms $t$ and $\tau$ mapping into $\Z_2$ since
for $t_{\la}$ and  $\tau_{\la}$, where $0 \leq \la < (q+1)/2$,   everything works the same way as in the framed case. Moreover
an easy computation shows that the target of the homomorphism
\[
\csi_{(q+1)/2}^{O}  \co \CC ob_{{}}^{O}(2,q) \to \imm({\tilde l^1},2)
\]
is also $\Z_2 \oplus \Z_2$, which corresponds to the number of double points mod $2$ and the twisting of 
the normal bundle of the 
the index $(q+1)/2$ fold singularities in the source manifold, just like in the case
of framed fold maps.
\end{rem}

\section{Cobordism of manifolds with stable framings}\label{ujcob}

\subsection{Stably framed manifolds and their cobordisms}

\begin{defn}[Stably $(n-1)$-framed manifolds]
For $n>0, q \geq 0$
an $(n+q)$-dimensional manifold $Q$
is {\it stably $(n-1)$-framed} if the vector bundle $TQ \oplus \ep^1_{Q}$
has $n$ sections that are linearly independent at every point of $Q$ (shortly, we say that the vector bundle $TQ \oplus \ep^1_{Q}$
 has $n$
independent sections).
\end{defn}

\begin{defn}[Stably $(n-1)$-framed cobordism]\label{stabframecob}
Let $Q_i$ be closed (oriented) stably $(n-1)$-framed $(n+q)$-dimensional manifolds, i.e.,
the vector bundles $TQ_i \oplus \ep^1_{Q_i}$
have $n$ independent sections $e_i^1, \ldots, e_i^{n}$ ($i=0,1$). 
We say that the manifolds $Q_0$ and $Q_1$ 
are {\it stably (oriented) $(n-1)$-framed cobordant} if 
\begin{enumerate}[(i)]
\item
they are (oriented) cobordant in the
usual sense by a compact (oriented) $(n+q+1)$-dimensional manifold $W$,
\item
the vector bundle $TW \oplus \ep^1_W$
has $n+1$ independent sections $f^1, \ldots, f^{n+1}$,
\item
the sections $f^j$, $j = 1, \ldots, n$,
restricted to the boundary $Q_0 \amalg Q_1$ of $W$ 
coincide with the sections
$e_i^j$  ($j=1, \ldots, n$ and $i=0,1$).
\item
the section $f^{n+1}$ restricted to the boundary part $Q_0$ (resp.\ $Q_1$) of $W$ 
coincides with an inward (resp.\ outward) normal section of $\del W$.
\end{enumerate}
\end{defn}

We denote the set of stably $(n-1)$-framed cobordism classes
of closed  stably $(n-1)$-framed $(n+q)$-dimensional manifolds 
 by 
$\CC_{n+q}^{}(n)$ (and by $\CC_{n+q}^{O}(n)$ in the oriented case) which is an abelian group with the disjoint union as operation.

%By using \cite[Lemma 3.1]{Sa1} (see also \cite[Lemma 3.1]{An3}), 
We obtain  homomorphisms 
$$\si_{n,q} \co \CC ob_{fr}(n,q) \to   \CC_{n+q}(n)$$
and
$$\si^{O}_{n,q} \co \CC ob_{fr}^{O}(n,q) \to   \CC_{n+q}^{O}(n),$$
which map a cobordism class of a framed fold map $g \co Q^{n+q} \to \R^n$ 
to the stably $(n-1)$-framed cobordism class of the
source manifold $Q^{n+q}$ precisely as follows.

In Section~\ref{existfrfold} we constructed a fiberwise epimorphism
$$\va(g, \varrho, r) \co TQ \oplus \ep^1_Q \to T\R^n$$
 from a given framed fold map $g \co Q^{n+q} \to \R^n$.
%At first, we will construct a fiberwise epimorphism $\va \co TQ \oplus \ep^1_Q \to T\R^n$, which will
%depend on the given framed fold map $g$, a chosen Riemannian metric $\varrho$ on $Q$ and a chosen $r > 0$, where this $r$ also depends on 
%$g$ and $\varrho$. 
We will define the cobordism group of fiberwise epimorphisms of this form which we will denote by
$\EE(n, q)$
and
give a homomorphism from the group $\CC ob_{fr}(n,q)$ to $\EE(n, q)$. Then we will define 
a homomorphism from $\EE(n, q)$ to the group $\CC_{n+q}(n)$. The composition 
$$\CC ob_{fr}(n,q) \to \EE(n, q) \to \CC_{n+q}(n)$$
of these two 
homomorphisms will be $\si_{n,q}$. 

\begin{defn}
Let $Q_0$ and $Q_1$ be closed $(n+q)$-dimensional manifolds.
Let $$\psi_i  \co TQ_i \oplus \ep^1_{Q_i} \to T\R^n,$$ $i = 0, 1$, be
fiberwise epimorphisms. We say that $\psi_0$ and $\psi_1$ are {\it cobordant} if
\begin{enumerate}[\rm (i)]
\item
there is a compact $(n+q+1)$-dimensional manifold $W$ such that $\del W = Q_0 \amalg Q_1$,
\item
there is a fiberwise epimorphism $\Psi \co TW \oplus \ep^1_W \to T\left( \R^n \x [0,1] \right)$ and
\item
for $i = 0, 1$ the bundle homomorphism $\Psi$ restricted to $TQ_i \oplus \ep^1_{Q_i}$
maps into $T\left( \R^n \x \{ i \} \right)$ and it is equal to $\psi_i$.
\end{enumerate}
The naturally resulting cobordism group is denoted by $\EE (n, q)$.
\end{defn}

We have the following (we use the notations of  Section~\ref{existfrfold}).
\begin{lem}
Let $g \co Q^{n+q} \to \R^n$ be a framed fold map.
Let $\varrho$ be a Riemannian metric on $Q$ and let $r = r(g, \varrho)$.
\begin{enumerate}[\rm (i)]
\item
If $0 < r' < r$, then $\va(g, \varrho, r')$ is cobordant to $\va(g, \varrho, r)$. 
\item
If $\varrho'$ is another Riemannian metric on $Q$ and
 $r' = r'(g, \varrho')$, then
there is a positive $r'' < \min {r}{r'} $ such that 
$\va(g, \varrho, r'')$ and $\va(g, \varrho', r'')$ are cobordant.
\item
If $(Q',  \varrho')$ is another Riemannian manifold and
$g' \co Q' \to \R^n$ is another framed fold map such that
$g$ and $g'$ are framed cobordant, then
there is a positive $$r'' < \min{r(g, \varrho)}{r'(g', \varrho')}$$ such that
$\va(g, \varrho, r'')$ and $\va(g', \varrho', r'')$ are cobordant.
\end{enumerate}
\end{lem}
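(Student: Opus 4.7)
My plan is to exploit that the construction $\va(g,\varrho,r)$ depends continuously on its three inputs, so any continuous path of valid data produces a continuous family of fiberwise epimorphisms, which in turn provides a cobordism in $\EE(n,q)$ by taking a product with $[0,1]$ (in cases (i) and (ii)), or by applying the same construction to the given framed cobordism $F$ itself (in case (iii)).

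For (i), fix $g$ and $\varrho$ and consider the convex interpolation $\al^s = (1-s)\al_{r'} + s\al_r$, $s \in [0,1]$, of the two cutoff functions associated with $r'$ and $r$. Each $\al^s$ is supported in $N_r(S_g)$, equals $1$ along $S_g$, and takes values in $[0,1]$, so the formula \eqref{fiberwiseepi} with $\al$ replaced by $\al^s$ continues to define a fiberwise epimorphism $\tilde\va^s$. Setting $\va^s = g_* \circ \tilde\va^s$ and $W = Q \x [0,1]$, the bundle map $\Psi = \va^s \oplus {\mathrm {id}}_{T[0,1]}$ on $TW \oplus \ep^1_W = TQ \oplus \ep^1 \oplus \ep^1 \to T\R^n \oplus \ep^1 = T(\R^n \x [0,1])$ furnishes the required cobordism.

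For (ii), let $\varrho_s = (1-s)\varrho + s\varrho'$, $s \in [0,1]$, be the affine family of Riemannian metrics joining the two metrics. Compactness of $S_g$ and of $[0,1]$ allows choosing a single $r'' > 0$ less than $r(g,\varrho_s)$ for every $s$ and small enough that all the tubular neighborhoods $N_{r''}^{\varrho_s}(S_g)$ lie in one common fixed tubular neighborhood of $S_g$. On such a neighborhood the nearest-point retraction $p_s$, the extension of the cokernel bundle, and the cutoff function vary continuously with $s$, so $\va(g,\varrho_s,r'')$ is a continuous family of fiberwise epimorphisms from which we build a cobordism on $Q \x [0,1]$ exactly as in~(i). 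For (iii), use the framed cobordism $F \co X \to \R^n \x [0,1]$ between $g$ and $g'$. Choose a Riemannian metric $\varrho_X$ on $X$ which in collars of the two boundary components is the product metric $\varrho \x ds^2$ and $\varrho' \x ds^2$ respectively, and select $r_X = r(F,\varrho_X)$ small enough that $N_{r_X}(S_F)$ lies inside these collars near the boundary. Because $S_F = S_g \x [0,\ep)$ (resp.\ $S_{g'} \x (1-\ep,1]$) in the collar and $\varrho_X$ is a product there, the cokernel bundle $\th_F$, the retraction to $S_F$, and the cutoff function $\al$ all split as products, so \eqref{fiberwiseepi} applied to $F$ gives $\va(F,\varrho_X,r_X)$ whose restriction on each boundary equals $\va(g,\varrho,r_X) \oplus {\mathrm {id}}_{T[0,1]}$ (resp.\ $\va(g',\varrho',r_X) \oplus {\mathrm {id}}_{T[0,1]}$), delivering the desired cobordism with $r'' = r_X$. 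To meet the strict inequality $r'' < \min{r(g,\varrho)}{r'(g',\varrho')}$ stated in the lemma, first shrink $r_X$ by invoking~(i) on each boundary and concatenate the resulting cobordisms.

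The principal obstacle is verifying that the restriction of $\va(F,\varrho_X,r_X)$ to a collar of each boundary really decouples as the product of the boundary data with ${\mathrm {id}}_{T[0,1]}$. This forces one to arrange $\varrho_X$ to be a genuine product metric in the collars, to control the tubular neighborhood of $S_F$ so that it respects that product structure, and to carefully track the identifications between $TX \oplus \ep^1_X$ at the boundary, the subbundle $TQ_i \oplus \ep^1_{Q_i}$, and the normal direction to $\R^n \x \{i\}$ inside $T(\R^n \x [0,1])$. Once these identifications are pinned down the splitting is forced by the explicit formula in \eqref{fiberwiseepi}.
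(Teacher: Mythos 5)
Your argument is correct and follows essentially the same strategy as the paper's (terse) proof: for (i) both proceed by building a continuous one-parameter family of fiberwise epimorphisms over $Q\times[0,1]$ (the paper linearly interpolates the tubular radius, you interpolate the cutoff functions $\al_{r'},\al_r$ directly—either way the resulting $\al^s$ equals $1$ on $S_g$ and vanishes outside $N_r(S_g)$, so each $\tilde\va^s$ remains an epimorphism); for (ii) and (iii) the paper merely says ``construct a suitable Riemannian metric on $Q\times[0,1]$ resp.\ on a cobordism $W$,'' which is precisely what you do, with the affine interpolation of metrics, the uniform choice of $r''$ by compactness, and the collar-product metric on $X$ supplying the details the paper leaves to the reader. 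Your final remark about shrinking $r_X$ via (i) and concatenating cobordisms correctly handles the strict inequality $r'' < \min(r(g,\varrho), r'(g',\varrho'))$, a point the paper does not explicitly address.
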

\begin{proof}
We get (i) by continuously modifying along $Q \x \{t\}$, $t \in [0,1]$, the radius $r$ 
of the tubular neighborhood $N_r(S_g)$ 
until we get $r'$ 
in the manifold $Q \x [0,1]$ equipped with the Riemannian metric equal to the direct sum of $\varrho$ and the standard metric on $[0,1]$.
We also modify the function $\al \co N_r(S_g) \to [0,1]$ such that it stays smooth near $S_g$.
The proof of (ii) and (iii) is an easy exercise in constructing Riemannian metrics 
on $Q \x [0,1]$ in the case of  (ii) and on 
a cobordism $W$ in the case of (iii).
\end{proof}

This lemma implies that the construction in Section~\ref{existfrfold}  induces 
 a homomorphism from 
the cobordism group of framed fold maps $\CC ob_{fr}(n,q)$
to the cobordism group $\EE (n, q)$.

Now, we want
a homomorphism
from $\EE (n, q)$ to the cobordism group of stably $(n-1)$-framed manifolds.
The kernel of a fiberwise epimorphism $\va \co TQ \oplus \ep^1_Q \to T\R^n$
has an orthogonal complement if there is a given Riemannian metric on $Q$.
Then this orthogonal complement has $n$ frames since it is mapped isomorphically onto $T\R^n$.
So we have the stably $(n-1)$-framed manifold $Q$ and 
the image of the cobordism class $[\va]$ is defined to be the stably framed cobordism class of $Q$.
%its stably framed cobordism class 
%is defined to be the image of the cobordism class $[\va]$.
But again we have to prove that these $n$ frames  do not depend on the  Riemannian metric on $Q$ up to cobordism. 

\begin{lem}
We have the following.
\begin{enumerate}[\rm (i)]
\item
If $\varrho_1$ and $\varrho_2$ are Riemannian metrics on $Q$, then
the two stably $(n-1)$-framed manifolds obtained from the fiberwise epimorphism $TQ \oplus \ep^1_Q \to T\R^n$ depending
on  $\varrho_1$ and $\varrho_2$
are cobordant.
\item
If the fiberwise epimorphisms $TQ_i \oplus \ep^1_{Q_i} \to T\R^n$, $i = 0, 1$, are cobordant, then for some Riemannian metrics on $Q_0$ and $Q_1$ the obtained 
stably $(n-1)$-framed manifolds $Q_0$ and $Q_1$ are cobordant.
\end{enumerate}
\end{lem}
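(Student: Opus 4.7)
The plan is to construct both cobordisms from the following observation: for any compact $(n+q+1)$-dimensional manifold $W$ equipped with a Riemannian metric on $TW \oplus \ep^1_W$ and any fiberwise epimorphism $\Psi \co TW \oplus \ep^1_W \to T(\R^n \x [0,1])$, the kernel $\ker{\Psi}$ has rank $q+1$, its orthogonal complement $(\ker{\Psi})^{\perp}$ has rank $n+1$ and is mapped isomorphically onto $T(\R^n \x [0,1])$ by $\Psi$, and pulling back the standard parallelization $\del/\del y_1,\ldots,\del/\del y_n,\del/\del t$ of $T(\R^n \x [0,1])$ yields $n+1$ pointwise independent sections of $TW \oplus \ep^1_W$. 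Both parts reduce to producing such a triple $(W,\Psi,\varrho_W)$ whose boundary behavior matches the framings prescribed on the $Q_i$.

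For part (i), take $W = Q \x [0,1]$, let $\pi \co W \to Q$ be the projection, and use the canonical identifications
$$TW \oplus \ep^1_W \cong \pi^*(TQ \oplus \ep^1_Q) \oplus T[0,1], \qquad T(\R^n \x [0,1]) \cong T\R^n \oplus T[0,1]$$
to define $\Psi = \pi^*\va \oplus \mathrm{id}_{T[0,1]}$, which is a fiberwise epimorphism with $\ker{\Psi} = \pi^*\ker{\va}$. Equip $W$ with a smooth Riemannian metric that agrees with the product $\varrho_1 \oplus dt^2$ on a collar of $Q \x \{0\}$ and with $\varrho_2 \oplus dt^2$ on a collar of $Q \x \{1\}$ (a partition-of-unity interpolation suffices in between). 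Over each collar the product structure splits $(\ker{\Psi})^{\perp}$ as $(\ker{\va})^{\perp_{\varrho_i}} \oplus T[0,1]$, so the first $n$ pulled-back sections restrict on $Q \x \{i\}$ to the $\varrho_i$-framing of $TQ \oplus \ep^1_Q$, while the last section restricts to $\del/\del t$, which is the inward normal at $Q \x \{0\}$ and the outward normal at $Q \x \{1\}$. All four conditions of Definition~\ref{stabframecob} follow.

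For part (ii), we are given $\Psi \co TW \oplus \ep^1_W \to T(\R^n \x [0,1])$ on a cobordism $W$ with $\Psi|_{TQ_i \oplus \ep^1_{Q_i}} = \psi_i$ mapping into $T(\R^n \x \{i\})$. Choose a Riemannian metric on $W$ that has product form $\varrho_i \oplus dt^2$ on collar neighborhoods of $Q_0$ and $Q_1$ for some Riemannian metrics $\varrho_i$ on $Q_i$, and declare these $\varrho_i$ to be the metrics used to build the stably framed structures on $Q_i$ from $\psi_i$; the general construction applied to $\Psi$ then yields $n+1$ independent sections on $W$ whose boundary restrictions are verified exactly as in (i). The only technically delicate point in either part is arranging the metric on $W$ to be a product with $dt^2$ near $\del W$, since this is what identifies the orthogonal complement of $\ker{\Psi}$ on the boundary with the $\varrho_i$-orthogonal complement on $Q_i$ plus the interval direction; everything else follows mechanically from the bundle algebra.
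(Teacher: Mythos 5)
Your proof is a careful and correct fleshing-out of exactly what the paper leaves as ``an easy exercise in extending/constructing Riemannian metrics,'' so in that sense the approaches coincide and part (i) is complete. One step in part (ii), however, is asserted too quickly: you write that the boundary restrictions of the $n+1$ sections ``are verified exactly as in (i),'' but in (i) the claim about the last section relied crucially on the special form $\Psi = \pi^*\va \oplus \mathrm{id}_{T[0,1]}$, which forces $(\Psi|_{(\ker\Psi)^\perp})^{-1}(\del/\del t)$ to be literally $\del/\del t$. For a general $\Psi$ in (ii), at a boundary point $p\in Q_0$ one has $(\ker\Psi)^\perp_p = (\ker\psi_0)^\perp \oplus \nu_p$, and $\Psi(\nu_p)$ is merely some line in $T\R^n\oplus T[0,1]$ transverse to $T\R^n$; the preimage of $\del/\del t$ then has the form $u+w$ with $u\in(\ker\psi_0)^\perp\subset T_pQ_0\oplus\ep^1_p$ possibly nonzero and $w\in\nu_p$ nonzero. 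So the last section at the boundary need not lie in $TW$ and need not point in the prescribed inward/outward direction; neither fact is ``exactly as in (i).'' The first point (the $\ep^1$-component and the tangential component $u$) is harmless and can be removed by a homotopy of framings keeping the $\nu$-component nonzero, but the sign of $w$ is a genuine additional check: it is controlled by the sign of the $t$-component of $\Psi|_{\nu}$ evaluated on the inward normal, and you should argue (or impose, possibly after swapping the roles of $Q_0$ and $Q_1$ and noting that reversing the $t$-direction of $T(\R^n\times[0,1])$ preserves $\psi_i$ since each maps into $T\R^n$) that the resulting signs satisfy condition (iv) of Definition~\ref{stabframecob}. The paper glosses over this same point, so this is less a criticism of your reasoning than a warning that ``exactly as in (i)'' papers over the only nontrivial verification in (ii).
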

\begin{proof}
The proof is an easy exercise in extending Riemannian metrics 
on $Q \x [0,1]$ in the case of (i)  and constructing Riemannian metrics on 
a cobordism $W$ in the case of (ii).
\end{proof}
All these arguments imply that we have the well-defined homomorphisms
$$\CC ob_{fr}(n,q) \to \EE(n, q)\mbox{\ \ \ \  and \ \ \ \ \ }\EE(n, q) \to \CC_{n+q}(n).$$

\begin{defn}
We denote by $\si_{n,q}$ the composition 
$$\CC ob_{fr}(n,q) \to \EE(n, q) \to \CC_{n+q}(n).$$
Analogously, in the 
 oriented case   we have the
well-defined homomorphism 
$$\si^{O}_{n,q} \co \CC ob^O_{fr}(n,q) \to  \CC^O_{n+q}(n).$$
\end{defn}

%with the stable framing obtained by 
%\cite[Lemma 3.1]{Sa1}\footnote{In \cite[Lemma 3.1]{Sa1} $n$ independent sections for 
%$TQ^{n+q} \oplus \ep^1_{Q^{n+q}}$ are constructed, which are unique up to stably framed cobordism.}.

\begin{prop}\label{stabfrcobsurj}\label{homsurj}
The homomorphisms $\si_{n,q}$ and $\si^{O}_{n,q}$ are surjective.
\end{prop}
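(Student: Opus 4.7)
The plan is to use Ando's h-principle (Theorem~\ref{Andothm}) together with Remark~\ref{framinghomot} to go directly from any stably $(n-1)$-framed manifold back to a framed fold map realizing that class. Given a closed stably $(n-1)$-framed $(n+q)$-manifold $Q$ with $n$ independent sections $e_1,\dots,e_n$ of $TQ \oplus \ep^1_Q$, fix a Riemannian metric $\varrho$ on $Q$ and define a fiberwise epimorphism $\psi \co TQ \oplus \ep^1_Q \to T\R^n$ by sending $e_i$ to the $i$-th standard basis section of $T\R^n$ and mapping the orthogonal complement of $\langle e_1,\dots,e_n\rangle$ to zero, as in Remark~\ref{framinghomot}. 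Theorem~\ref{Andothm} then yields a framed fold map $f \co Q \to \R^n$ homotopic to the base map of $\psi$.

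Next I would compare the original framing with the one induced by $f$. The construction of Section~\ref{existfrfold} produces, for some $r>0$, a fiberwise epimorphism $\va(f,\varrho,r) \co TQ \oplus \ep^1_Q \to T\R^n$ whose kernel has an orthogonal complement carrying $n$ canonical independent sections $e_1',\dots,e_n'$. By Remark~\ref{framinghomot}, the h-principle argument of \cite[Theorem~3.2]{An3} gives a homotopy through fiberwise epimorphisms from $\psi$ to $\va(f,\varrho,r)$, and hence a homotopy of $n$-frames from $(e_1,\dots,e_n)$ to $(e_1',\dots,e_n')$ in $TQ \oplus \ep^1_Q$. Pulling this homotopy back over the cylinder $W = Q \x [0,1]$, one obtains $n$ independent sections $f^1,\dots,f^n$ of $TW \oplus \ep^1_W$ interpolating between the two framings on $\del W = Q \amalg Q$; adjoining the standard unit section of the $[0,1]$-direction as $f^{n+1}$ (reversing its sign on the outgoing collar to satisfy condition (iv) of Definition~\ref{stabframecob}) exhibits $[Q,(e_i)] = [Q,(e_i')]$ in $\CC_{n+q}(n)$. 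Since by construction $\si_{n,q}([f]) = [Q,(e_i')]$, we conclude $\si_{n,q}([f])=[Q,(e_i)]$, proving surjectivity.

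For the oriented version, exactly the same argument applies: all the constructions of Remark~\ref{framinghomot} and Section~\ref{existfrfold} are natural with respect to orientations of the framing, so beginning with an oriented stably $(n-1)$-framed representative gives a framed fold map on an oriented $Q$, and the connecting homotopy above yields an oriented stably $(n-1)$-framed cobordism. The only genuine technical point is that two a~priori unrelated fiberwise epimorphisms on the same manifold $Q$ must be connected, which is precisely the content of \cite[Theorem~2.1]{An3} recorded in Remark~\ref{framinghomot}; once this is invoked, everything else is a bookkeeping exercise in extending frames and Riemannian data over a cylinder.
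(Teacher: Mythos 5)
Your proof is correct and follows essentially the same route as the paper: both take an arbitrary stably $(n-1)$-framed class, invoke Remark~\ref{framinghomot} and Ando's h-principle (Theorem~\ref{Andothm}) to produce a framed fold map on the same manifold, and then use the resulting homotopy of fiberwise epimorphisms to get a cobordism between the given framing and the one induced by the fold map. Your write-up merely spells out in more detail the cylinder cobordism and the treatment of the $(n+1)$-st section that the paper compresses into the sentence ``This homotopy yields a cobordism between the two stable $(n-1)$-framings.''
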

\begin{proof}
Let us take a cobordism class $\omega$ in $\CC_{n+q}(n)$ represented by a stably $(n-1)$-framed manifold $Q$.
Then by Remark~\ref{framinghomot} there is a framed fold map $f \co Q \to \R^n$
such that the stable $(n-1)$-framing given by $f$ is homotopic to 
the stable $(n-1)$-framing which was given originally on $Q$.
This homotopy yields a cobordism between the two stable $(n-1)$-framings. 
Hence $\si_{n,q}$ maps the cobordism class of the framed fold map $f$ 
to the cobordism class $\omega$. The proof for $\si^{O}_{n,q}$ is 
similar.
\end{proof}

%\begin{rem}
%Note that 
In fact \cite{Ko} deals extensively with the groups  $\CC_{n+q}^{}(n)$ and $\CC_{n+q}^{O}(n)$, 
which are naturally isomorphic through stabilization to cobordism groups denoted in \cite{Ko}
 by ${\mathfrak {N}}_{n+q}(n-1,n-1)$ and $\Omega_{n+q}(n-1,n-1)$, respectively, and Koschorke
computes them for low $n$ (see 
\cite[Theorem~6.6, Proposition~7.17 and Theorems~12.1, 12.8, 19.39, 19.40, 19.41]{Ko}).

For example, the group ${\mathfrak {N}}_{m}(k,k)$ is defined to be 
the cobordism group of closed $m$-dimensional manifolds admitting $k$ linearly independent
vector fields, where a cobordism $W$ between such manifolds 
is a compact $(m+1)$-dimensional manifold as usual but $W$ also admits  $k+1$ linearly independent 
vector fields extending the given $k$ vector fields on its boundary and %as expected 
the $(k+1)$th vector field on $W$ corresponds to the inward and outward normal vectors on its boundary.
For details, see \cite{Ko}.

%\end{rem}

\begin{defn}\label{eveneulercob}
Let ${\mathfrak {N}}_{m}^{2|\chi}$ and $\Omega_{m}^{2|\chi}$ denote the kernels of the homomorphism 
$$w_m \co \mathfrak {N}_{m} \to \Z_2$$ and $$w_m \co \Omega_{m} \to \Z_2,$$ respectively, where for a closed $m$-dimensional 
manifold $M^m$, $w_m([M^m])$ is the Stiefel-Whitney number for the
$m$th Stiefel-Whitney
class of $M^m$.
In other words ${\mathfrak {N}}_{m}^{2|\chi}$ (resp.\ $\Omega_{m}^{2|\chi}$) is the 
 cobordism group (resp.\ oriented cobordism group) of manifolds of even Euler characteristic.
\end{defn}

%\begin{exa}
For example, the group $\Omega_4^{2|\chi}$ is isomorphic to $\Z$ and it is generated by the class $[\CP^2 \# \CP^2]$.
%\end{exa}
By \cite[Proposition~7.17 and Theorems~12.8, 19.39]{Ko}, 
we have the following.

\begin{prop}\label{koschizom}
For $k \geq 1$, we have
\begin{enumerate}[\rm (i)]
\item
$\CC_{k+1}^{}(2) \cong  \Z_2 \oplus {\mathfrak {N}}_{k+1}^{2|\chi}$,
%$\CC_{4k-1}^{O}(1) = \Omega_{4k-1}$,
%\item
%$\CC_{4k-1}^{O}(2) \cong \Omega_{4k-1}$,
\item
$\CC_{4k}^{O}(2) \cong \Omega_{4k}^{2|\chi}$ and the isomorphism is induced by the forgetful map $\CC_{4k}^{O}(2) \to \Omega_{4k}$.
%\item
%$\CC_{4k+1}^{O}(2) \cong \Omega_{4k+1} \oplus \Z_2^2$,
%\item
%$\CC_{4k+2}^{O}(2) \cong \Omega_{4k+2} \oplus \Z_2$
%and the forgetful map $\CC_{4k+2}^{O}(2) \to \Omega_{4k+2}$ has kernel $\Z_2$.
%$\CC_{4k}^{O}(3) = \Omega_{4k}^{4|\si}$, and
%\item
%$\CC_{4k+4}^{O}(4) \cong \Omega_{4k+4}^{8|\si}$. 
\end{enumerate}
\end{prop}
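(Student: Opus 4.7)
The plan is to reduce to Koschorke's framework and then invoke his explicit computations. Stabilization yields natural isomorphisms $\CC_{k+1}(2) \cong \mathfrak{N}_{k+1}(1,1)$ and $\CC_{4k}^{O}(2) \cong \Omega_{4k}(1,1)$, as recalled in the paragraph preceding the proposition, so it suffices to identify these two Koschorke groups.

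I would consider the forgetful homomorphism $p$ sending the cobordism class of a manifold equipped with a nowhere zero tangent vector field to the cobordism class of the underlying manifold. By Poincar\'e--Hopf the existence of such a vector field forces the Euler characteristic to be zero, so $p$ lands in $\mathfrak{N}_{k+1}^{2|\chi}$, respectively in $\Omega_{4k}^{2|\chi}$. Conversely, by Hopf's theorem any closed manifold with even Euler characteristic admits a nowhere zero vector field after being replaced within its cobordism class by a representative with $\chi = 0$ (obtained, when $k+1$ is even, by connect-summing with copies of $S^1 \times S^{k}$, which bound); hence $p$ is surjective onto these subgroups.

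The kernel of $p$ is exactly what Koschorke computes. For (i), Proposition~7.17 together with Theorem~12.8 of \cite{Ko} identify $\ker p \cong \Z_2$, detected by a mod~$2$ Kervaire-type invariant associated to the vector field; since $\mathfrak{N}_{k+1}$ and hence every subgroup of it is a $\Z_2$-vector space, the resulting short exact sequence splits and yields the direct sum decomposition claimed in (i). For (ii), Theorem~19.39 of \cite{Ko} shows that the analogous oriented obstruction vanishes in dimension $4k$, so the oriented version $p^{O}$ is an isomorphism onto $\Omega_{4k}^{2|\chi}$, which is precisely the statement in (ii) with the isomorphism induced by the forgetful map.

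The main obstacle is entirely contained in this last vanishing assertion: a cobordism $W$ between two oriented $4k$-manifolds each equipped with a vector field does not automatically admit two linearly independent vector fields extending the boundary data, and the unique mod~$2$ obstruction to such an extension must be shown to vanish on oriented $4k$-cobordisms. This is a refinement of Atiyah's vanishing for the Kervaire semicharacteristic and constitutes the technical core of Koschorke's Theorem~19.39; everything else in the argument is formal.
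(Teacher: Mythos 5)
Your approach is the same as the paper's: the proposition is proved there by citing Koschorke's Proposition~7.17 (for the stabilization isomorphisms $\CC_{m}(2) \cong \mathfrak{N}_m(1,1)$ and $\CC_{m}^{O}(2) \cong \Omega_m(1,1)$) together with Theorems~12.8 and~19.39, exactly as you do, and you distribute the citations over (i) and (ii) in the same way. One slip in your supplementary reasoning for part (i): the short exact sequence $0 \to \Z_2 \to \mathfrak{N}_{k+1}(1,1) \to \mathfrak{N}_{k+1}^{2|\chi} \to 0$ does not split merely because the quotient is a $\Z_2$-vector space (compare $0 \to \Z_2 \to \Z_4 \to \Z_2 \to 0$); what you need is that the middle group $\mathfrak{N}_{k+1}(1,1)$ is itself $2$-torsion. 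That is in fact true --- for any closed $M$ with a nowhere vanishing vector field $v$ the cylinder $M \times [0,1]$ carries the two independent fields $v$ and $\partial/\partial t$ with the correct inward/outward behaviour on the boundary, giving $2[M]=0$ --- so your splitting argument goes through once this is supplied. It is also unnecessary: Koschorke's Theorem~12.8 asserts the direct sum $\mathfrak{N}_m(1,1) \cong \Z_2 \oplus \mathfrak{N}_m^{2|\chi}$ outright via the pair $(\chi'', f)$, as the paper recalls in the proof of Proposition~\ref{koschizomexplained}.
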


\subsection{The singular set of the stably framed source manifold}

It is important to know what $\si_{n,q}$ does if we watch it through the isomorphism stated in (i) of Proposition~\ref{koschizom}.
%If $q$ is even, then $\CC ob_{fr}(n,q)$ is the same as $\CC ob(n,q)$ since even codimensional fold maps
%can be framed naturally. 
The next statement explains in geometric terms  the homomorphism $\chi''$ defined in \cite[page 130 (12.5)]{Ko},
which plays an important role in the theory of framed bordisms.
Following \cite[Definition~2.1 and pages~26--27]{Ko} we will denote by 
$\Omega_1(point; \ep^1)$, where $\ep^1$ is the trivial line bundle over a point, the first normal bordism group of the point with trivial coefficients. This group is isomorphic to 
the stable homotopy group $\pi_1^s$, see \cite[equation (2.2) on page 27]{Ko}.
Recall that $\pi_1^s \cong \Z_2$ and hence $\Omega_1(point; \ep^1) \cong \Z_2$. The generator is 
represented by a circle embedded in a high dimensional Euclidean space with trivialized normal bundle such that the trivialization is twisted once as we move along 
the circle.

\begin{prop}\label{koschizomexplained}
Let $q \geq 0$.
Under the isomorphism $\CC_{2+q}(2) \cong  \Z_2 \oplus {\mathfrak {N}}_{2+q}^{2|\chi}$ obtained in 
\cite[Theorem~12.8]{Ko},  the homomorphism
$$\si_{2,q} \co \CC ob_{{fr}}(2,q) \to    \Z_2 \oplus {\mathfrak {N}}_{q+2}^{2|\chi}$$
is the map $$[g \co M^{2+q} \to \R^2] \mapsto \left([g|_{S_g}], [M^{2+q}]\right),$$ where
$[M]$ is the cobordism class of $M$  and $[g|_{S_g}] \in \Z_2$ is the cobordism class
of the immersion $g|_{S_g}$ of the fold singular set of $g$ into $\R^2$.
\end{prop}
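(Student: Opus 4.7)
My plan is to verify the two coordinates of the claimed formula separately.

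\emph{Second coordinate.} Under Koschorke's isomorphism $\CC_{2+q}(2) \cong \Z_2 \oplus \mathfrak{N}_{2+q}^{2|\chi}$ from \cite[Theorem~12.8]{Ko}, projection onto the second summand is induced by the forgetful map that discards the stable framing. Since $\si_{2,q}([g])$ is by construction the stably $1$-framed cobordism class of $(M, \va(g, \varrho, r))$, applying this projection yields $[M^{2+q}] \in \mathfrak{N}_{2+q}$; the existence of the fold map $g$ forces $\chi(M)$ to be even (cf.\ \cite{Lev1}), so $[M] \in \mathfrak{N}_{2+q}^{2|\chi}$ as required.

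\emph{First coordinate.} This is the substantive part. Koschorke's $\Z_2$-valued invariant $\chi''$ on $\CC_{2+q}(2)$, defined in \cite[page~130, (12.5)]{Ko}, assigns to a stably $1$-framed manifold $(M, e_1, e_2)$ the framed cobordism class in $\Omega_1(\mathrm{pt}; \ep^1) \cong \pi_1^s \cong \Z_2$ of the generically $1$-dimensional singular locus $V \subset M$ where the $2$-plane $\mathrm{span}(e_1(x), e_2(x)) \subset (TM \oplus \ep^1)_x$ descends into $TM_x$, together with the natural framing of its normal bundle in $M$. I would compute $\chi''$ on our stable framing using the sections $e_1, e_2$ of $TM \oplus \ep^1$ obtained by lifting the standard basis of $T\R^2$ through the orthogonal complement of the kernel of $\va(g, \varrho, r)$ from Section~\ref{existfrfold}. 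In fold normal form, a direct local computation shows that $e_1$ stays in $TM$ everywhere, while the $\ep^1$-component of $e_2$ is supported in a tubular neighborhood of $S_g$ and vanishes, after a small transverse perturbation, along a $1$-manifold isotopic to $S_g$ whose natural framing in $M$ matches the canonical fold framing of Definition~\ref{cooricuspmap}. Hence $\chi'' \circ \si_{2,q}([g]) = [g|_{S_g}] \in \pi_1^s \cong \Z_2$.

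The main obstacle is this identification of normal framings in the last step. A more robust alternative, avoiding an explicit unpacking of $\chi''$, is to verify the formula on generators. Both $[g] \mapsto [g|_{S_g}]$ and $\chi'' \circ \si_{2,q}$ are homomorphisms $\CC ob_{fr}(2,q) \to \Z_2$ — for the former, a cobordism of framed fold maps produces a cobordism of immersed framed $1$-manifolds of singular sets in $\R^2 \times [0,1]$, using Definition~\ref{framedcobdef} — and they are easily seen to coincide on a class generating the $\Z_2$ summand complementary to $\mathfrak{N}_{2+q}^{2|\chi}$, for instance on a Morse function bundle over an embedded circle in $\R^2$ from Section~\ref{mobuimm}, where both invariants evaluate to $1$ while the source manifold bounds.
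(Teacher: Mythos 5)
Your direct-computation route misdescribes Koschorke's invariant $\chi''$. That invariant is computed from the zero locus of a generic section of the $(q+1)$-dimensional complement $\eta_M$ of the $2$-plane $\mathrm{span}(e_1,e_2)$ in $TM\oplus\ep^1_M$, not from the locus $\{x : \mathrm{span}(e_1(x),e_2(x))\subset T_xM\}$ — the latter condition has codimension $2$ in $M$, so it cuts out a generically $q$-dimensional set, not a $1$-manifold. Likewise, the vanishing set of the $\ep^1$-component of $e_2$ is not a $1$-manifold: that component is identically zero on all of $M\smallsetminus N_r(S_g)$, so its zero locus is a codimension-$0$ region, and no small perturbation turns it into a curve isotopic to $S_g$. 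The paper instead takes the section of $\eta_M$ given by orthogonally projecting the $\ep^1$-unit onto $\eta_M$; this vanishes precisely where $\ep^1\subset\mathrm{span}(e_1,e_2)$, which is $S_g$ and nothing else. The nontrivial remaining step — which your sketch glosses over with "whose natural framing in $M$ matches the canonical fold framing" — is to match the resulting stable parallelization of $S_g$ (coming from the isomorphism $TS_g\oplus\eta_M|_{S_g}\oplus\ep^1_{S_g}\cong\eta_M|_{S_g}\oplus g^*T\R^2|_{S_g}$) with the one coming from the normal bundle of the immersion $g|_{S_g}$ into $\R^2$; the paper devotes a careful bundle-map diagram to exactly this point.

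Your "robust alternative" has a separate gap. Two homomorphisms $\CC ob_{fr}(2,q)\to\Z_2$ that agree on a single class need not agree; you must check a generating set of $\CC ob_{fr}(2,q)$, which you do not identify. To reduce the check to a single class hitting the $\Z_2$ summand of $\CC_{2+q}(2)\cong\Z_2\oplus\mathfrak{N}_{2+q}^{2|\chi}$, you would first need to know that $[g]\mapsto[g|_{S_g}]$ descends along $\si_{2,q}$ to a map on $\CC_{2+q}(2)$, and that both induced maps kill the $\mathfrak{N}_{2+q}^{2|\chi}$ summand — but that descent is not given a priori; establishing it is essentially equivalent to the proposition itself, so the argument is circular as stated.
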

\begin{proof}
By definition the homomorphism  $\si_{2,q}$ maps a fold cobordism class $[g \co M^{2+q} \to \R^2]$ to the cobordism class of the stably $1$-framed source manifold $[M]$ 
in $\CC_{2+q}(2)$.
By \cite[Proposition~7.17]{Ko} the group $\CC_{2+q}(2)$ is isomorphic to the cobordism group ${\mathfrak {N}}_{2+q}(1,1)$, 
which is by definition the cobordism group of closed $(2+q)$-dimensional manifolds admitting one non-zero
vector field. If the closed manifold $N$ represents a class in ${\mathfrak {N}}_{2+q}(1,1)$, then 
 the tangent space $TN$ has one non-zero vector field and then $TN \oplus \ep^1_N$
 has two linearly independent vector fields: the second vector field is the natural framing of the bundle $\ep^1_N$ added to $TN$.
This is called stabilization and yields a representative of a class in $\CC_{2+q}(2)$. This is the natural stabilizing isomorphism
$${\mathrm {St}} \co {\mathfrak {N}}_{2+q}(1,1) \to \CC_{2+q}(2)$$ established in \cite{Ko}.

Then \cite[Theorem~12.8]{Ko} says that ${\mathfrak {N}}_{2+q}(1,1)$ is isomorphic to 
$\Z_2 \oplus {\mathfrak {N}}_{2+q}^{2|\chi}$ via a map %denoted by
$$(\chi'', f) \co {\mathfrak {N}}_{2+q}(1,1) \to  \Z_2 \oplus {\mathfrak {N}}_{2+q}^{2|\chi},$$ where
$f$ is just the forgetful map which forgets all the framings. The map $\chi''$ is defined as follows (see \cite[page 130 (12.5)]{Ko}).
Take a representative $N^{2+q}$ of a class $x \in {\mathfrak {N}}_{2+q}(1,1)$, then $TN = \ep^1_N \oplus \eta_N$
for some $(q+1)$-dimensional vector bundle $\eta_N$.
Let $S \subset N$ be the zero set of a generic smooth section of $\eta_N$. Then $S$ is a closed smooth $1$-dimensional manifold
%over $N - S$ the bundle $\eta$ has a new framing 
and
since $TS \oplus \eta_S = TN|_S = \ep^1_S \oplus \eta_S$ over $S$,
adding an appropriate bundle $\eta^{\perp}$ over $S$ such that $\eta_S \oplus \eta^{\perp}$ is a trivial bundle,  
we have a given stable parallelization of $TS$.
Hence $S$ represents an element in the bordism group $\Omega_1(point; \ep^1) \cong \Z_2$, see \cite[Definition~2.1 and the following explanation on page 27]{Ko}. This element is the value $\chi''(x)$. Notice that 
the isomorphism $\CC_{2+q}(2) \cong  \Z_2 \oplus {\mathfrak {N}}_{2+q}^{2|\chi}$ mentioned in the statement 
of the proposition
is just $(\chi'', f) \circ {\mathrm {St}}^{-1}$.

Now, we want to compute $$(\chi'', f) \circ {\mathrm {St}}^{-1} \circ \si_{2,q} ([g] )$$
of a class $[g \co M^{2+q} \to \R^2] \in \CC ob_{fr}(2,q)$.
The class  $[g \co M^{2+q} \to \R^2] \in \CC ob_{fr}(2,q)$ is mapped by $\si_{2,q}$ to the class of the stably $1$-framed source manifold $[M] \in \CC_{2+q}(2)$.
So we have $$TM \oplus \ep^1_M \cong \eta_M \oplus \ep^2_M$$ for some $\eta_M$.
Then we get ${\mathrm {St}}^{-1} ( [M] ) \in {\mathfrak {N}}_{2+q}(1,1)$ by the following process: 
since $[M]$ is in the image of the stabilizing isomorphism, 
the manifold  $M$ is stably $1$-framed cobordant by some $W$ to a manifold $M'$
such that 
\begin{enumerate}[\rm (1)]
\item
the bundle $TM' \oplus \ep^1_{M'}$ has two linearly independent vector fields,
\item
the second vector field of this framing coincides with the natural framing of the bundle $\ep^1_{M'}$.
\end{enumerate}
We can delete this second vector field together with the summand $\ep^1_{M'}$.
This means we obtain a representative $M'$ of the class ${\mathrm {St}}^{-1} ( [M] )$ such that $TM'$ has one non-zero vector field.
To get the value $$(\chi'', f) ( {\mathrm {St}}^{-1} ( [M] ) )$$ in the group 
$\Z_2 \oplus {\mathfrak {N}}_{2+q}^{2|\chi}$
at first observe that 
$f \circ  {\mathrm {St}}^{-1} ( [M] )$ is just the cobordism class $[M] \in {\mathfrak {N}}_{2+q}^{2|\chi}$.
 Now, we are going to look for $\chi'' \circ {\mathrm {St}}^{-1}([M]) = \chi'' ([M'])$.
 Computing $\chi'' ([M'])$ goes as above.
Namely, 
$TM' = \ep^1_{M'} \oplus \eta_{M'}$ for some bundle $\eta_{M'}$ and 
 we can find the stably parallelized $1$-dimensional manifold $S' \subset M'$
by taking  the zero set of a generic section of $\eta_{M'}$ over $M'$.
Of course we could find the same  $S' \subset M'$ 
in $TM' \oplus \ep^1_{M'} = \left( \ep^1_{M'} \oplus \eta_{M'} \right) \oplus \ep^1_{M'}$ instead of $TM'$ as well if we do not delete that second 
vector field specified in (2). 
Besides these,  we can find an $\eta_W$ in the entire $TW \oplus \ep^1_W$
which restricts to our $\eta_{M'}$ in $TM' \oplus \ep^1_{M'}$
and to $\eta_{M}$ in $TM \oplus \ep^1_{M}$.
And hence we find a stably parallelized $2$-dimensional manifold $\tilde S \subset W$ as the zero set of a generic section of $\eta_W$ in $TW \oplus \ep^1_W$.
Summarizing, we obtain that the zero set $S \subset M$ of a generic section of $\eta_M$ over the original manifold $M$, where 
we had $TM \oplus \ep^1_M \cong \eta_M \oplus \ep^2_M$,
is cobordant to $S'$ by the surface $\tilde S$ 
in the sense of the group $\Omega_1(point; \ep^1)$.

As a result we get that if we look for the value $\chi'' \circ {\mathrm {St}}^{-1} \circ \si_{2,q} \left([g \co M^{2+q} \to \R^2]\right)$,
then it is enough to consider the {\it stably} $1$-framed manifold $M$ 
as a representative in $\CC_{2+q}(2)$
and to find the zero set $S \subset M$ 
of a generic section of $\eta_M$ where $TM \oplus \ep^1_M \cong \ep^2_M \oplus \eta_M$, as we obtain from the map $g$.
Then this $S$ and its stable parallelization
give the same element in $\Omega_1(point; \ep^1)$ as $\chi'' \circ {\mathrm {St}}^{-1} \circ \si_{2,q} ([g])$. 

So having $TM \oplus \ep^1_M \cong \ep^2_M \oplus \eta_M$ obtained from a fold map
$g \co M^{2+q} \to \R^2$, we have to understand the  geometric meaning of this stable parallelized $S$.
We got the bundle $\eta_M$ as the {kernel} of the fiberwise epimorphism
 $$\tilde \va_{g, \varrho, r} \co TM \oplus \ep^1_M \to g^*T{\R^2}$$ obtained from the fold map $g$ by
applying
(\ref{fiberwiseepi}) in Section~\ref{existfrfold}. 
So the bundle $\eta_M$ is completely contained in $TM$ exactly at the singular points of $g$.
If we project fiberwisely the unit vector of $\ep^1_M$ to $\eta_M$ perpendicularly by the Riemannian metric used in Section~\ref{existfrfold}
(note that the direction $\ep^1_M$ is perpendicular to $TM$), then 
we get a continuous section $s$  of $\eta_M$, which is zero exactly at the singular set $S_g$ and smooth near $S_g$.
Approximate this continuous section $s$ by a smooth section of $\eta_M$, which coincides with $s$ near the singular set $S_g$.

Then the singular set $S_g$ is a smooth $1$-dimensional submanifold of $M$,
obviously $TS_g$ and $\eta_M|_{S_g}$ are subbundles of $TM|_{S_g}$ and we have
$$ \eta_M|_{S_g} \oplus TS_g =  TM|_{S_g},$$ 
\begin{equation}\label{isom}
\begin{array}{c}
%  \eta_M|_{S_g} \oplus TS_g =  TM|_{S_g},    \\ \\
\eta_M|_{S_g} \oplus TS_g  \oplus \ep^1_{S_g} = TM|_{S_g} \oplus \ep^1_{S_g} \cong \eta_M|_{S_g}  \oplus g^*T\R^2|_{S_g} 
\end{array}
% \eta_M|_{S_g} \oplus TS_g =  TM|_{S_g}, \\
%\eta_M|_{S_g} \oplus TS_g  \oplus \ep^1_{S_g} = TM|_{S_g} \oplus \ep^1_{S_g} \cong \eta_M|_{S_g}  \oplus g^*T\R^2|_{S_g} 
\end{equation} 
where the second isomorphism is being induced by $\tilde \va$. This equation
 gives the stable framing of $TS_g$ after adding $\eta_M^{\perp}|_{S_g}$ to both sides.
 Restricted to  $S_g$ the homomorphism $\tilde \va$ has the form 
\begin{equation}\label{va_form}
\eta_M|_{S_g} \oplus TS_g   \oplus \ep^1_{S_g} \to g^*T\R^2,
\end{equation}
$$
(u, v, w) \mapsto  dg(u, v) +  w,\mbox{\ \ \ \ \ \ \ \ \ \  }(u, v, w) \in \eta_M|_{S_g} \oplus TS_g \oplus \ep^1_{S_g}.
$$
 The isomorphism in (\ref{isom}) can be made explicit: if it is
 $$
 \iota \co \eta_M|_{S_g} \oplus TS_g  \oplus \ep^1_{S_g} \to \eta_M|_{S_g} \oplus g^*T\R^2|_{S_g},$$
 then 
 $$
 \iota ( u, v, w ) = (u, \tilde \va_{g, \varrho, r} ( 0, v, w ) ) =  (u, dg(0, v) +  w ).
 $$
 
%The bundle  $\eta_M|_{S_g}$ is the kernel of  the differential $dg$ as well at the points of $S_g$ as we can see from 
%(\ref{fiberwiseepi}).
Furthermore, since $dg|_{S_g}$ maps $\eta_M|_{S_g}$ to $0$ and $g|'_{S_g} \co TS_g \to g^*T\R^2$ coincides with
$$
v \mapsto dg (0, v), \mbox{\ \ \ \ \ }v \in TS_g,
$$
we have that the map $TS_g   \oplus \ep^1_{S_g} \to  g^*T\R^2$, $(v,w) \mapsto   dg (0, v) + w$ is the same as
the fiberwise isomorphism
$$g|'_{S_g} + \nu_g \co TS_g \oplus \ep^1_{S_g} \to g^*T\R^2$$ 
induced by 
 the  immersion $g|_{S_g} \co S_g \to \R^2$ and its trivial normal bundle $\nu_g$.

%,  we have the commutative diagram
%\begin{equation*}
%\begin{CD}
%TS_g   \oplus \ep^1_{S_g} @>   (v,w) \mapsto   dg (0, v) + w >>   g^*T\R^2   \\ %@>>>    g_* \circ dg|_{S_g} (0, v) + g_*|_{\ep^1_{S_g}} ( w )  \\
%@VVV @VV = V \\ %@VV = V \\
% \eta_M|_{S_g} \oplus TS_g   \oplus \ep^1_{S_g}  @>   (u,v,w) \mapsto dg (u, v) + w >>  g^*T\R^2, %@>>>     g_* \circ dg|_{S_g} (u, v) + g_*|_{\ep^1_{S_g}} ( w ),
%\end{CD}
%\end{equation*}
%for every 
%$(u, v, w) \in \eta_M|_{S_g} \oplus TS_g \oplus  \ep^1_{S_g}$,
%where the bottom row is the homomorphism $\tilde \va|_{S_g}$ and 
%upper row is just
%the fiberwise isomorphism
%$$g|'_{S_g} + \nu_g \co TS_g \oplus \ep^1_{S_g} \to g^*T\R^2$$ 
%induced by 
% the  immersion $g|_{S_g} \co S_g \to \R^2$ and its trivial normal bundle $\nu_g$.

Then  for some $k \in \N$ we have the  diagram
\begin{equation*}
\begin{CD}
\eta_M|_{S_g} \oplus TS_g   \oplus \ep^1_{S_g} @> \iota >>  \eta_M|_{S_g} \oplus g^* T\R^2|_{S_g}   \\
@V \eta^{\perp}_M|_{S_g} \oplus VV @V  \eta^{\perp}_M|_{S_g} \oplus V V  \\
\ep^k_{S_g} \oplus TS_g \oplus \ep^1_{S_g} @> \ka >> \ep^k_{S_g} \oplus g^* T\R^2|_{S_g} \\
@AAA @AAA  \\
TS_g \oplus \ep^1_{S_g} @> g|'_{S_g} + \nu_g  >> g^* T\R^2|_{S_g},   
\end{CD}
\end{equation*}
where the top vertical downward arrows are 
the identity isomorphisms on the direct summands $TS_g   \oplus \ep^1_{S_g}$ and $g^* T\R^2|_{S_g}$,
respectively. The homomorphism  $\ka$ is defined as
$\ka(u, v, w) = (u, dg(0, v) +  w)$ for $(u, v, w) \in \ep^k_{S_g} \oplus TS_g \oplus \ep^1_{S_g}$.
The bottom vertical upward arrows are inclusions.   These imply that the diagram is commutative.

Thus $g|'_{S_g} + \nu_g$ is stably equivalent to the isomorphism giving the stable framing of $TS_g$ through $\iota$
in (\ref{isom}).
Hence 
$$
\chi'' \circ {\mathrm {St}}^{-1} \circ \si_{2,q} \left([g \co M^{2+q} \to \R^2]\right) = [g|_{S_g}]
$$
in the group $\Z_2$.
\end{proof}

%Note that Proposition~\ref{stabfrcobsurj} together with computational results of \cite{Ko} yield some immediate 
%statements about existence of fold maps up to cobordism.
%For example, we have

%\begin{prop}
%Let $k \geq 2$ and $M^{4k}$ be a closed orientable manifold of dimension $4k$. 
%If $M$ has a fold map into $\R^4$, then $8 | \si(M)$.
%Conversely, if $8 | \si(M)$, then $M$ is cobordant to a closed orientable manifold having a fold map into $\R^4$.
%\end{prop}
%\begin{proof}
%This follows immediately from \cite[Theorem~19.41]{Ko}.
%\end{proof}

\section{The complete invariants of cobordisms of framed fold maps}\label{Complete_invariants_of_cobordisms_of_framed_fold_maps}

Recall that 
the group $\imm(\ep^1_X,n)$, where $X$ is a topological space,
is  identified with the group
$\pi^s_{n-1} \oplus \pi^s_{n-1}(X)$.
Throughout this section $\la$ denotes non-negative integers referring to the absolute indices of the fold singularities.

Denote by $\Im_{n,q}^O$ the homomorphism 
 $$\left( \si^{O}_{n,q} , \csi_{{1}}^O, \ldots, \csi_{\lfloor (q+1)/2 \rfloor}^O \right)$$
 and by $\Im_{n,q}$ the homomorphism
 $$\left(\si^{}_{n,q} , \csi_{{1}}, \ldots, \csi_{\lfloor (q+1)/2 \rfloor}\right).$$

\begin{thm}\label{invarithm}
Let $n \geq 1, q \geq 0$. %, $q + 1 \neq 4k$.
Then, the homomorphisms
\begin{multline*}
  \Im_{n,q} \co 
\CC ob_{fr}^{}(n,q) \longrightarrow    {\CC}_{n+q}{(n)}
\oplus  
\bigoplus_{1 \leq \la \leq (q+1)/2} \pi^s_{n-1} \oplus \pi^s_{n-1}\left(B(O(\la) \x O(q+1-\la))\right)
\end{multline*}
and
\begin{multline*}
  \Im_{n,q}^O
\co 
\CC ob_{fr}^{O}(n,q) \longrightarrow    {\CC}_{n+q}^O{(n)}
\oplus  
\bigoplus_{1 \leq \la \leq (q+1)/2} \pi^s_{n-1} \oplus \pi^s_{n-1}\left(BS(O(\la) \x O(q+1-\la))\right)
\end{multline*}
are injective. Hence two  framed fold maps $f_i \co Q_i^{n+q} \to \R^n$, $i=0,1$, are  framed  
cobordant if and only if 
$$\Im_{n,q}\left([f_0]\right) = \Im_{n,q}\left([f_1]\right)$$
and
two  oriented framed fold maps $f_i \co Q_i^{n+q} \to \R^n$, $i=0,1$, are oriented framed  
cobordant if and only if 
$$\Im_{n,q}^{O}\left([f_0]\right) = \Im_{n,q}^{O}\left([f_1]\right).$$
\end{thm}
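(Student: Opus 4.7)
The plan is to prove the injectivity of $\Im_{n,q}$ (and $\Im_{n,q}^{O}$) by taking two framed fold maps $f_i \co Q_i^{n+q} \to \R^n$ with $\Im_{n,q}([f_0]) = \Im_{n,q}([f_1])$ and assembling a framed fold cobordism $F \co W \to \R^n \x [0,1]$ between them out of the cobordism data provided componentwise by the invariants, closing the construction by an application of the relative h-principle in the form of Theorem~\ref{relexifold}.

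First, I would unpack the hypotheses. The equality $\si_{n,q}([f_0]) = \si_{n,q}([f_1])$ provides, via the factorization $\CC ob_{fr}(n,q) \to \EE(n,q) \to \CC_{n+q}(n)$ developed in Section~\ref{ujcob}, a compact $(n+q+1)$-manifold $W$ with $\del W = Q_0 \amalg Q_1$ together with a fiberwise epimorphism $\Psi \co TW \oplus \ep^1_W \to T(\R^n \x [0,1])$ that restricts, on collars of $Q_i$, to $\va(f_i, \varrho_i, r_i) \x \mathrm{id}$. For each $1 \leq \la \leq \lfloor (q+1)/2 \rfloor$ the equality $\csi_{\la}([f_0]) = \csi_{\la}([f_1])$ furnishes an immersed cobordism $H_{\la} \co \Sigma_{\la}^{n} \to \R^n \x [0,1]$ between the two indefinite fold immersions, whose normal bundle is induced from $\ep^1_{B(O(\la) \x O(q+1-\la))}$.

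Next I would install an indefinite fold structure inside $W$. Using general position, I embed each $\Sigma_{\la}$ into $W$ as a codimension-$(q+1)$ submanifold $\tilde \Sigma_{\la}$ meeting $\del W$ transversally in $S_{\la}(f_0) \amalg S_{\la}(f_1)$, arranged so that its normal bundle in $W$ carries the $O(\la) \x O(q+1-\la)$-reduction required by an index-$\la$ fold germ and so that the composition $\tilde \Sigma_{\la} \hookrightarrow W \to \R^n \x [0,1]$ realises $H_{\la}$ up to homotopy. On a tubular neighborhood $N_{\la}$ of $\tilde \Sigma_{\la}$ I write down a framed fold map $G_{\la} \co N_{\la} \to \R^n \x [0,1]$ modelled on the quadratic germ of index $\la$, chosen to agree with $f_i$ near $\tilde \Sigma_{\la} \cap Q_i$. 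A direct inspection of the local model shows that the restriction of $G_{\la}$ to the sphere-bundle boundary $\del N_{\la}$ has only definite fold singularities, since on each fibre sphere the quadratic form $-y_1^2 - \cdots - y_{\la}^2 + y_{\la+1}^2 + \cdots + y_{q+1}^2$ restricted to $|y| = \ep$ attains only a maximum and a minimum.

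Finally, set $W' = W - \bigsqcup_{\la} \mathrm{int}(N_{\la})$. The map prescribed on $\del W'$ by $f_i$ on $\del W \cap W'$ and by $G_{\la}$ on $\del N_{\la}$ is smooth and has only definite fold singularities near $C := \del W'$. After homotoping $\Psi|_{W'}$, relative to $\del W \cap W'$, so that it agrees on a neighborhood of $C$ with the $\va$ associated to this prescribed map, Theorem~\ref{relexifold} produces a framed fold map $F' \co W' \to \R^n \x [0,1]$ coinciding with the prescribed map on $C$; glueing $F'$ to the $G_{\la}$ along the $\del N_{\la}$ yields the desired framed fold cobordism $F$. I expect the main obstacle to lie in the embedding step: realising each abstract immersed cobordism $H_{\la}$ as an embedded submanifold of $W$ whose normal bundle carries the precise structure group demanded by a fold germ of index $\la$, so that the external data recorded by $\csi_{\la}$ and the internal stable framing coming from $\Psi$ assemble into a single consistent geometric object that the local fold model can fill in. The oriented case proceeds identically after replacing $O(\la) \x O(q+1-\la)$ and the middle-index group by their orientation-preserving subgroups and using $\si^{O}_{n,q}$, $\csi_{\la}^{O}$ throughout.
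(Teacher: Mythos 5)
Your plan shares the paper's key tool (the relative h-principle packaged as Theorem~\ref{relexifold}), but the way you try to assemble the boundary data is the reverse of what the paper does, and the reversal is exactly where your argument has a genuine gap. You start from the stably framed cobordism $W$ delivered by $\si_{n,q}$ and then try to \emph{embed} the $\csi_\la$-cobordisms $\Sigma_\la$ into $W$ as codimension-$(q+1)$ submanifolds with the fold-germ normal-bundle reduction and with image realising $H_\la$ in $\R^n\times[0,1]$. But the $\si$-datum and the $\csi_\la$-data are a priori independent cobordisms over two unrelated targets (a manifold $W$ on the one hand, immersions into $\R^n\times[0,1]$ on the other), and nothing guarantees that the particular $\Sigma_\la$, with the particular $O(\la)\times O(q+1-\la)$-structure and the particular regular homotopy class of $H_\la$, can be placed inside $W$ compatibly with $\Psi$, nor that after such a placement $\Psi|_{W'}$ can be homotoped rel $\del W\cap W'$ to the local $\va$ of the prescribed map. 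You flag this as ``the main obstacle,'' but it is not merely a technicality to be smoothed over: it is an unsolved step, and I do not see an argument that it can be done in general.

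The paper avoids this difficulty by inverting the order of construction. It reduces to showing $\Im_{n,q}([f])=0$ implies $[f]=0$, then \emph{builds outward} from $f\times\mathrm{id}_{[0,3\ep)}$: the vanishing of $\csi_\la([f])$ supplies null-cobordisms of the index-$\la$ fold bundles, which are attached as disk-bundle thickenings to produce a compact $W$ with $\del W=Q\amalg P$, where $P$ carries only definite folds. Thus the fold structure on $W$ (including the indefinite fold locus) is created by construction, not sought inside a pre-existing manifold. Only then is the $\si$-datum used: the framing from $\va(F,\varrho,r)$ is adjusted over $P$ (using vanishing of the relevant relative homotopy group of Stiefel manifolds), and since $\si_{n,q}([f])=0$ this framed $P$ bounds, giving a manifold $X$ with boundary $Q$, an $(n+1)$-framing, and a continuous extension $G$. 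Theorem~\ref{relexifold} is then applied once, rel a neighborhood of $Q$ where only definite folds appear. I would recommend restructuring along those lines: attach the $\csi_\la$-null-cobordisms first, cap off using the $\si$-null-cobordism afterward, and apply the relative h-principle at the very end.
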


The proof will be given in Section~\ref{completebiz}.
For $q$ even, since fold maps with even codimension can be framed naturally, in the statement of Theorem~\ref{invarithm}
the group
$\CC ob_{fr}^{O}(n,q)$ can be replaced by $\CC ob_{{}}^{O}(n,q)$ and
$\CC ob_{fr}^{}(n,q)$ can be replaced by $\CC ob_{{}}^{}(n,q)$.
For $q$ odd, there is a forgetful map $\CC ob_{fr}^{(O)}(n,q) \to \CC ob^{(O)}(n,q)$, which is 
obviously surjective if $n \leq 2$.

\begin{cor}\label{foldcobcor}
For $k \geq 0$,
the homomorphism $\Im_{n,2k}^{(O)}$ gives a complete invariant
of the (oriented) cobordism group $\CC ob_{{}}^{(O)}(n,2k)$ of fold maps.
\end{cor}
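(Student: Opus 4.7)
The plan is to reduce the corollary to Theorem~\ref{invarithm} by invoking the natural equivalence between framed and unframed cobordism groups that is available in even codimension, as was noted immediately after Definition~\ref{framedcobdef}.

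First, I would recall the observation that for $q = 2k$ the forgetful homomorphisms
\[
\CC ob_{fr}^{}(n,2k) \longrightarrow \CC ob^{}(n,2k), \qquad \CC ob_{fr}^{O}(n,2k) \longrightarrow \CC ob^{O}(n,2k)
\]
are isomorphisms. The point, expanding on the remark in Section~\ref{kob}, is twofold: since $q+1 = 2k+1$ is odd, no ``middle-index'' stratum $\lambda = (q+1)/2$ can occur, so the enlarged structure group generated by $T$ never enters the picture; and for each admissible index $0 \leq \lambda \leq k$ with $\lambda < (q+1)/2$, the structure-group analysis of Section~\ref{geomcobinv} shows that the cokernel line bundle associated to the Morse germ is canonically trivialized, so every codimension-one immersion $f|_{S_\lambda(f)}$ comes with a canonical normal framing. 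Consequently every fold map $Q^{n+2k}\to\R^n$ acquires a canonical framing in the sense of Definition~\ref{cooricuspmap}, and the same construction applied to a cobordism produces a canonical framed cobordism, inverting the forgetful map.

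Next I would transport $\Im_{n,2k}^{(O)}$ along this canonical isomorphism to obtain a well-defined homomorphism on $\CC ob^{(O)}(n,2k)$. The transport is well-defined precisely because the canonical framing is an intrinsic feature of the fold map (and of the fold cobordism), not an extra choice.

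Finally, Theorem~\ref{invarithm} states that $\Im_{n,2k}^{(O)}$ is injective on $\CC ob_{fr}^{(O)}(n,2k)$, hence by the canonical identification just established it is injective on $\CC ob^{(O)}(n,2k)$ as well, which is the claim of the corollary. The only point that requires any care is verifying the canonicity of the framing for every index $0 \leq \lambda \leq k$, and this is essentially immediate from the observation that $(q+1)/2 = k + 1/2$ is not an integer and that each group $O(\lambda) \times O(q+1-\lambda)$ acts trivially on the one-dimensional target of the corresponding Morse germ; everything else is a purely formal translation through the isomorphism.
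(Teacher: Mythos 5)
Your argument is correct and takes essentially the same route as the paper: the corollary is immediate from Theorem~\ref{invarithm} once one observes that for $q=2k$ the forgetful map $\CC ob_{fr}^{(O)}(n,2k)\to\CC ob^{(O)}(n,2k)$ is an isomorphism, a fact the paper records after Definition~\ref{framedcobdef} and restates after Theorem~\ref{invarithm}. You simply unpack the reason the canonical framing exists (no middle-index stratum in even codimension, trivial action on the target line of each Morse germ), which the paper asserts without elaboration.
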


\begin{rem}\label{unoriequi}
By Corollary~\ref{foldcobcor} the homomorphisms
$$%\begin{multline*}
\Im_{n,0}^{O} \co 
\CC ob_{{}}^{O}(n,0) \to  {\CC}_{n}^O{(n)} 
{\mbox{\ \ \ and \ \ \ }}
\Im_{n,0}^{} \co 
\CC ob_{{}}^{}(n,0) \to  {\CC}_{n}{(n)}
$$%\end{multline*}
are injective, and by
Proposition~\ref{homsurj} they are surjective as well. Hence,
we have $$\CC ob_{{}}^{O}(n,0) \cong  {\CC}_{n}^O{(n)} {\mbox{\ \ \ and \ \ \ }} \CC ob_{{}}^{}(n,0) \cong  {\CC}_{n}{(n)}.$$
Since the group ${\CC}_{n}^O{(n)}$ is isomorphic to $\pi^s_n$, 
we obtain another argument for the isomorphism
$\CC ob_{{}}^{O}(n,0) \cong  \pi^s_n$ (for the original proof, see Ando \cite{An2, An}).
\end{rem}

\begin{cor}
\begin{enumerate}[\rm (1)]
\item
The cobordism group $\CC ob (2,0)$ of fold maps from unoriented surfaces into $\R^2$ is 
isomorphic to $\Z_2$. A fold map from an unoriented surface to $\R^2$ is null-cobordant if and only if 
its singular set is immersed into $\R^2$ with an even number of double points.
\item
The cobordism group $\CC ob (3,0)$ of fold maps from unoriented $3$-manifolds into $\R^3$ is 
isomorphic to $\Z_2$. 
A fold map from an unoriented $3$-manifold to $\R^3$ is null-cobordant if and only if 
the immersion of its  singular set into $\R^3$ is null-cobordant.
\end{enumerate}
\end{cor}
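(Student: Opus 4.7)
Both parts fit into a common template: use Remark~\ref{unoriequi} to reduce the fold cobordism group $\CC ob(n,0)$ to the stably framed cobordism group $\CC_n(n)$, compute $\CC_n(n)$ via Koschorke's results cited in the paper, and finally identify the resulting $\Z_2$ geometrically as the cobordism class of the immersion of the fold singular set.

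For part (1), first apply Remark~\ref{unoriequi} to obtain $\CC ob(2,0) \cong \CC_2(2)$. Then Proposition~\ref{koschizom}(i) with $k=1$ gives $\CC_2(2) \cong \Z_2 \oplus \mathfrak{N}_2^{2|\chi}$. The summand $\mathfrak{N}_2^{2|\chi}$ vanishes: the group $\mathfrak{N}_2$ is $\Z_2$ generated by $[\RP^2]$, and since $\chi(\RP^2)=1$ is odd, the Stiefel--Whitney map $w_2 \co \mathfrak{N}_2 \to \Z_2$ sends the generator to $1$, so its kernel is trivial. Hence $\CC ob(2,0) \cong \Z_2$. Next, Proposition~\ref{koschizomexplained} with $q=0$ says that this isomorphism sends $[g \co M^2 \to \R^2]$ to $([g|_{S_g}], [M])$; the second coordinate lands in $\mathfrak{N}_2^{2|\chi}=0$, so $[g]$ is detected by $[g|_{S_g}] \in \pi_1^s = \Z_2$. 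As noted just before Remark~\ref{nfrori}, for $n=2$ this is precisely the number of double points mod $2$ of the immersed singular set in $\R^2$.

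For part (2), the analogous chain of identifications gives $\CC ob(3,0) \cong \CC_3(3) \cong \mathfrak{N}_3(2,2)$ in Koschorke's notation, and by \cite[Theorem~19.41]{Ko} the latter equals $\Z_2$. The remaining task is to identify the isomorphism with the cobordism class of the immersed singular set $g|_{S_g} \co S_g^2 \hookrightarrow \R^3$. To do this I would mimic the proof of Proposition~\ref{koschizomexplained} with $n=3$ in place of $n=2$. Concretely, the kernel $\eta$ of the fiberwise epimorphism $\tilde\va(g,\varrho,r) \co TM \oplus \ep^1_M \to g^* T\R^3$ is a line bundle over $M^3$ whose generic section (obtained by perpendicular projection of the standard trivialization of $\ep^1_M$ onto $\eta$) has zero set equal to $S_g$. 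The splitting $\eta|_{S_g} \oplus TS_g \cong TM|_{S_g}$ and the isomorphism $\iota \co \eta|_{S_g}\oplus TS_g \oplus \ep^1_{S_g} \to \eta|_{S_g} \oplus g^*T\R^3|_{S_g}$ induced by $\tilde\va$ provide the stable framing of $S_g$ coming from the stably $2$-framing of $M$. A commutative diagram identical in form to the one at the end of the proof of Proposition~\ref{koschizomexplained} then shows that, stably, this framing coincides with $g|'_{S_g}+\nu_g \co TS_g \oplus \ep^1_{S_g} \to g^*T\R^3|_{S_g}$ coming from the immersion $g|_{S_g}$ together with its canonical trivialization of the normal line bundle. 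Hence the image of $[g]$ in $\CC_3(3)\cong\Z_2$ is $[g|_{S_g}]$, proving the equivalence claimed.

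The main obstacle is the geometric identification in part (2): Proposition~\ref{koschizomexplained} is stated only for $n=2$, and its proof must be replayed for $n=3$ with the analogues of equations~(\ref{isom})--(\ref{va_form}) and the ensuing commutative diagram. The key point is that nothing in that argument is dimension-specific beyond bookkeeping: the kernel of $\tilde\va(g,\varrho,r)$ is always a line bundle (since $TM\oplus\ep^1_M$ has rank $n+1$ and maps onto $T\R^n$), the fold singular set has the correct codimension to appear as the zero locus of a generic section of $\eta$, and the agreement of the two stable framings on $S_g$ follows formally from $dg|_{S_g}$ vanishing on $\eta|_{S_g}$ and from the trivialization of the cokernel of $dg$ along $S_g$ built into the definition of a framed fold map.
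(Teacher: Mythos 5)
Your proof is correct and takes essentially the same route as the paper's. For part (1) the arguments coincide exactly: reduce via Remark~\ref{unoriequi} to $\CC_2(2)$, use Proposition~\ref{koschizom}(i) and $\mathfrak{N}_2^{2|\chi}=0$ to get $\Z_2$, and use Proposition~\ref{koschizomexplained} (with $q=0$) for the geometric interpretation as mod~2 double points.

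For part (2) your treatment is slightly more careful than the paper's, and the differences are worth noting. The paper asserts $\CC_3(3)\cong \Z_2\oplus\mathfrak{N}_3^{2|\chi}$ by citing Koschorke's Theorem~12.8 (which, as quoted in the body of the paper, concerns the groups $\mathfrak{N}_m(1,1)$), whereas you use Theorem~19.41 for $\mathfrak{N}_3(2,2)\cong\CC_3(3)$ directly; both give $\Z_2$, and your citation is arguably the more natural one for the stabilized group $\mathfrak{N}_3(2,2)$. More importantly, the paper then simply says ``Proposition~\ref{koschizomexplained} gives the second part of the statement,'' despite the fact that Proposition~\ref{koschizomexplained} is stated only for $n=2$; you correctly observe that the argument must be replayed for $n=3$, you identify which pieces transfer (the kernel $\eta$ of $\tilde\va(g,\varrho,r)$ is a line bundle whose zero locus is $S_g$, the isomorphism $\iota$ induced by $\tilde\va$ over $S_g$, and the stable comparison with $g|'_{S_g}+\nu_g$), and you confirm that nothing dimension-specific breaks. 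Your dimension bookkeeping also checks out: for $q=0$, $n=3$, the kernel has rank $1$, $S_g$ has codimension $1$ in $M^3$, and both the immersion cobordism group $\imm(\ep^1,3)=\pi_3^s(S^1)$ and the target of Koschorke's $\chi''$-type invariant are $\pi_2^s\cong\Z_2$. In short, the approach is the same as the paper's; you just make explicit a step the paper leaves implicit.
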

\begin{proof}
For (1) 
we have $\CC ob (2,0) \cong \CC_{2}(2) \cong  \Z_2 \oplus {\mathfrak {N}}_{2}^{2|\chi} \cong \Z_2$ 
because ${\mathfrak {N}}_{2}^{2|\chi} = 0$.
Proposition~\ref{koschizomexplained} gives the second part of the statement.
For (2) we have 
$\CC ob (3,0) \cong \CC_{3}(3) \cong  \Z_2 \oplus {\mathfrak {N}}_{3}^{2|\chi}$  by \cite[Theorem~12.8]{Ko} and
then $\CC ob (3,0) \cong \Z_2$
because ${\mathfrak {N}}_{3}^{2|\chi} = 0$.
Proposition~\ref{koschizomexplained} gives the second part of the statement.
\end{proof}

\begin{cor}
By \cite[Proposition~7.17 and Theorem~19.40]{Ko} we know that 
for $k \geq 1$ we have $\CC_{4k-1}^{O}(2) \cong \Omega_{4k-1}$.
The image of $\Im_{2,4k-3}^O$ is a subgroup of 
$\Omega_{4k-1} \oplus  (\Z_2 \oplus \Z_2)^{2k-1}$ and when $k=1$
this means that $\CC ob_{fr}^{O}(2,1) \subset \Z_2 \oplus \Z_2$.
It is easy to construct two framed fold maps $f_{1,2} \co M_{1,2} \to \R^2$ on $3$-manifolds such that
the index $1$ fold singular set of $f_1$ is immersed with one double point into $\R^2$, 
the index $1$ fold singular set of $f_2$ is immersed without double points into $\R^2$, 
the twisting $\tau_1 ( [f_1]) = 0$ and the twisting $\tau_1 ( [f_2]) = 1$, see, for example (3-1) and (1)-(4) in \cite[page~328]{Kal6} with the role $N^n = \R^2$.
Then the cobordism invariants $t_1$ and $\tau_1$ distinguish between $[f_1]$ and $[f_2]$,
hence $\CC ob_{fr}^{O}(2,1) \cong \Z_2 \oplus \Z_2$.
The forgetful map $\CC ob_{fr}^{O}(2,1)  \to \CC ob^{O}(2,1)$ is clearly surjective and even in 
$ \CC ob^{O}(2,1)$ the invariants $t_1$ and $\tau_1$ distinguish between $[f_1]$ and $[f_2]$, so
$ \CC ob^{O}(2,1) \cong \Z_2 \oplus \Z_2$. For other proofs of this fact not using h-principle, see \cite{Kal2} and \cite[Theorem~2.9]{Kal6}.
\end{cor}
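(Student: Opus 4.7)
My plan is to derive the claims of the corollary from the injectivity of $\Im_{2,4k-3}^O$ given by Theorem~\ref{invarithm}, combined with Koschorke's identification of the target and an explicit realization of the geometric invariants in codimension one.

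First I would apply Theorem~\ref{invarithm} with $n=2$ and $q=4k-3$, which embeds
\[
\CC ob_{fr}^O(2,4k-3) \hra \CC_{4k-1}^O(2) \oplus \bigoplus_{\la=1}^{2k-1} \bigl( \pi_1^s \oplus \pi_1^s\bigl(BS(O(\la) \x O(4k-2-\la))\bigr) \bigr).
\]
Substituting the cited isomorphism $\CC_{4k-1}^O(2) \cong \Omega_{4k-1}$ and using $\pi_1^s = \Z_2$ together with the fact, already observed in the discussion of the oriented framed invariants $t$ and $\tau$ preceding Remark~\ref{nfrori}, that $\pi_1^s(BG) = \Z_2$ for each of the oriented subgroups $G$ at hand, every summand indexed by $\la$ reduces to $\Z_2 \oplus \Z_2$. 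This yields the first assertion.

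Next I would specialize to $k=1$, where $\Omega_3 = 0$ and the index range collapses to $\la=1$, so the embedding becomes $\CC ob_{fr}^O(2,1) \hra \Z_2 \oplus \Z_2$. To show this embedding is surjective it suffices to realize the two generators $(1,0)$ and $(0,1)$ of $\Z_2 \oplus \Z_2$ by the pair of invariants $(t_1, \tau_1)$. For this I would take the explicit local fold constructions recorded at \cite[page~328]{Kal6} and build two oriented framed fold maps $f_i \co M_i^3 \to \R^2$ with $(t_1, \tau_1)([f_1]) = (1,0)$ and $(t_1, \tau_1)([f_2]) = (0,1)$. Since these invariants are well-defined on framed cobordism classes, the classes $[f_1]$, $[f_2]$ and their sum exhaust the four elements of $\Z_2 \oplus \Z_2$, so $\CC ob_{fr}^O(2,1) \cong \Z_2 \oplus \Z_2$.

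Finally, I would deduce the unframed statement via the forgetful homomorphism $\CC ob_{fr}^O(2,1) \to \CC ob^O(2,1)$, which is surjective for $n \leq 2$ as noted after Theorem~\ref{invarithm}. Since $t_1$ and $\tau_1$ descend to $\CC ob^O(2,1)$ by Remark~\ref{nfrori} and still separate $[f_1]$ and $[f_2]$ there, the unframed group already contains $\Z_2 \oplus \Z_2$; combined with the surjection from a group of the same order, the forgetful map must be an isomorphism. The only substantive step is the construction of $f_1$ and $f_2$ with the prescribed invariant values, which is precisely why the reference to the local models in \cite{Kal6} is needed; the remaining arguments are bookkeeping with the injectivity of $\Im_{2,1}^O$.
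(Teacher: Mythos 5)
Your proposal follows the same route as the paper: apply the injectivity of $\Im_{2,4k-3}^O$ from Theorem~\ref{invarithm}, identify the target via Koschorke's $\CC_{4k-1}^O(2)\cong\Omega_{4k-1}$ and the computation $\pi_1^s(BG)=\Z_2$ for the relevant oriented subgroups, specialize to $k=1$ where $\Omega_3=0$, realize the two generators of $\Z_2\oplus\Z_2$ by the explicit fold maps of \cite[page~328]{Kal6}, and then push through the forgetful homomorphism using Remark~\ref{nfrori}. The argument is correct and matches the paper's; the only cosmetic difference is that you make the order-counting in the final step slightly more explicit.
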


\section{Morse function bundles over immersions}\label{mobuimm}

In this section, we recall some results of \cite{Kal7} for the convenience of the reader.
These results will be used in Sections~\ref{proof2}, \ref{proof3} and \ref{proof4}.

\subsection{Cobordism classes of Morse function bundles}

For $q \geq 2$ and $1 \leq j < (q+1)/2$
 we construct fold maps $\varphi_{j, q}$ of
some $(2+q)$-dimensional manifolds into $\R^2$, where the $\varphi_{j, q}$ will also depend on some other parameters. 
The cobordism classes of these fold maps $\varphi_{j, q}$ will serve
as generators of an important direct summand of the cobordism group $\CC ob_{{}}^O(2,q)$.
We will construct similar maps in the unoriented case as well.

For $q \geq 2$ and $1 \leq j < (q+1)/2$, 
let $h_j \co S^{q+1} \to \R$ be a Morse function of the $(q+1)$-dimensional sphere onto the
closed interval $[-\ep,\ep]$
with four critical points $a, b, c, d \in S^{q+1}$ of index 
$0, j-1, j, q+1$, respectively, such that $h_j(a) = -\ep$, $h_j(b) = -\ep/2$, $h_j(c) = 0$ and
$h_j(d) = \ep$.
Recall the following result from \cite{Kal7}.
\begin{lem}[Lemma~3.2 \cite{Kal7}]\label{kiterj}
There exists an identification of the 
Morse function $h_j$ around its critical point of index $j$
with the fold germ
\[
\ga(x_1,\ldots,x_{q+1})=\left(-x_1^2 - \cdots -x_{j}^2 + x_{1+j}^2 + \cdots + x_{1+q}^2\right),
\]
such that under this identification
\begin{enumerate}[\textup{(}\rm 1\textup{)}]
\item
any automorphism in the automorphism group $O(1) \x O(q)$ \textup{(}in the case of $j=1$\textup{)}
\item
any automorphism in the automorphism group\footnote{Let $O(\mathbf{1},k)$ denote the subgroup of the orthogonal group $O(k+1)$ 
whose elements are of the form $\begin{pmatrix}
1 & 0 \\ 0 & M 
\end{pmatrix}$
where $M$ is an element of the group $O(k)$.} 
$O(\mathbf{1},j-1) \x O(q+1-j)$ 
%$O(q+1-j)$ 
\textup{(}in the case of $j>1$\textup{)} 
\end{enumerate}
of the fold germ $\ga$
can be extended to an automorphism of the Morse function $h_j$\textup{.}
\end{lem}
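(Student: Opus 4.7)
My approach would be to construct $h_j$ with the claimed symmetry built in from the start, and then obtain the identification with $\ga$ via the equivariant Morse lemma. Realize $S^{q+1}$ as the unit sphere in $\R^{q+2}$ with coordinates $(z,u,v_1,\ldots,v_{j-1},w_1,\ldots,w_{q+1-j})$ (when $j=1$ the $v$-block is empty), and let $G_j$ act orthogonally by fixing $z$, acting on $u$ trivially when $j>1$ and by $\pm 1$ when $j=1$, rotating $v$ by $O(j-1)$, and rotating $w$ by $O(q+1-j)$. This realizes the claimed automorphism group as a global orthogonal action on $S^{q+1}$. I would then take $h_j$ to be a $G_j$-invariant smooth function---equivalently, $F$ evaluated on the basic invariants $(z,u,|v|^2,|w|^2)$ for $j>1$, or $(z,u^2,|w|^2)$ for $j=1$---engineered so that its restriction to the fixed circle $\{v=w=0\} \cap S^{q+1}$ has four non-degenerate critical points $a,b,c,d$ with the prescribed values $-\ep,-\ep/2,0,\ep$ and with the descending segment of $h_j$ from $c$ on this circle terminating at $b$; and also so that the transverse Hessian at $c$ is negative-definite in the $v$-directions and positive-definite in the $w$-directions. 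Because $F$ is a function of independent invariants, the circle behavior and the $|v|^2$-, $|w|^2$-dependence can be prescribed separately, so all these sign conditions are easy to arrange.

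Next, I would invoke the equivariant Morse lemma of Wasserman \cite{Wa}: because $h_j$ is $G_j$-invariant and $c$ is a non-degenerate $G_j$-fixed critical point, there exist equivariant local coordinates $(x_1,\ldots,x_{q+1})$ near $c$ in which $h_j$ is in Morse normal form \emph{and} $G_j$ acts by its linearization on $T_c S^{q+1}$. The tangent representation splits as $\R_{\mathrm{axis}} \oplus \R^{j-1}_v \oplus \R^{q+1-j}_w$, and by Schur's lemma the $G_j$-invariant Hessian is block-diagonal with respect to this splitting with signs $-,-,+$ coming from the construction above. Rescaling the coordinates in each block yields exactly the fold germ $\ga$, with the linearized $G_j$-action equal to $O(1) \x O(q)$ (when $j=1$) or $O(\mathbf{1},j-1) \x O(q+1-j)$ (when $j>1$), as the lemma requires.

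The main subtlety---and what distinguishes the two cases---is why the reflection $x_1 \mapsto -x_1$ lifts globally when $j=1$ but not when $j>1$. Such a reflection would have to permute critical points of $h_j$ in an index-preserving way while fixing $c$ and $d$. For $j=1$ the two index-$0$ points $a$ and $b$ can be interchanged, and the corresponding $O(1)$-symmetry is manifest in the invariant $u^2$ used above. For $j>1$ the point $b$ has index $j-1 \geq 1$ and there is no second index-$(j-1)$ critical point available for it to be swapped with, so this reflection cannot globalize; correspondingly one takes $F$ as a function of $u$ (not of $u^2$), and only the subgroup $O(\mathbf{1},j-1) \x O(q+1-j)$ extends. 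The hardest technical step in the actual write-up would be the equivariant Morse normalization, but that is exactly the content of \cite{Wa}.
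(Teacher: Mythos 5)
There is a genuine gap in the $j=1$ case, and it is precisely the point the student flags as "the main subtlety."

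For $j=1$ the proposed $h_1$ is required to be invariant under the full group $G_1 = O(1)\times O(q)$, including the reflection $u\mapsto -u$. On the circle $\{v=w=0\}$ this reflection has exactly two fixed points, which must carry the unique critical points $c$ and $d$ of indices $1$ and $q+1$; the two index-$0$ critical points $a$ and $b$ are then necessarily a free orbit of the reflection, forcing $h_1(a)=h_1(b)$. This contradicts the lemma's stipulation $h_j(a)=-\ep\neq-\ep/2=h_j(b)$, so the function the student proposes to build "with the prescribed values" simply does not exist when $j=1$; the construction cannot get off the ground.

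The student's attempted escape --- "for $j=1$ the two index-$0$ points $a$ and $b$ can be interchanged" --- does not work either, and the obstruction is worth making explicit. An automorphism of $h_j$ extending an element $(\alpha,\mathrm{id}_\R)\in O(1)\times O(q)$ of the germ $\ga$ is a pair $(\Phi,\Psi)$ with $\Psi\circ h_j=h_j\circ\Phi$, and under any identification the target part $\Psi$ must equal the identity in a neighbourhood of $0=h_j(c)$. Hence $\Psi$ is orientation-preserving, hence strictly increasing, and since it also fixes $\ep=h_j(d)$ it is forced to preserve the order of the critical values; in particular $\Psi$ cannot exchange $-\ep$ and $-\ep/2$, so $\Phi$ cannot exchange $a$ and $b$. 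On the other hand, the reflection $x_1\mapsto -x_1$ interchanges the two local components of $\{h_j<0\}$ near $c$, and (by the handle-attaching argument for $S^{q+1}$ with indices $0,0,1,q+1$) these two components lie one in the sublevel ball containing $a$ and one in that containing $b$. So a global extension of the reflection would have to swap $a$ and $b$, which we just ruled out. The student's proposal thus contains both a nonexistent construction and an incorrect justification for $j=1$; neither the equivariant Morse lemma of Wasserman nor any choice of identification repairs this, because the obstruction is topological.

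The $j>1$ part of the outline is conceptually reasonable: here $G_j=O(\mathbf{1},j-1)\times O(q+1-j)$ fixes the $(z,u)$-circle pointwise, the four critical values can be prescribed freely, and the equivariant Morse lemma does yield equivariant fold coordinates at $c$. Even there, though, the assertion that "the circle behaviour and the $|v|^2$-, $|w|^2$-dependence can be prescribed separately" needs care: one has to choose the transverse coefficient functions so that the resulting function has \emph{no} critical orbits off the circle (otherwise $h_j$ fails to be Morse), and the sign pattern of the transverse Hessian at $a,b,c,d$ must be compatible with the total indices $0,j-1,j,q+1$. These are manageable but non-trivial checks that the write-up glosses over. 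As it stands, however, the decisive problem is the $j=1$ case, where the approach cannot be made to match the lemma's hypotheses on the critical values.
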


Following \cite[Section~3]{Kal7} in the special case of $n = 2$, 
we define the group homomorphisms
$$\al_{1} \co \imm\left({\ep^1_{BS(O(1) \x O(q))}},2\right) \to \CC ob_{{}}^O(2,q)$$
and $$\al_{j} \co \imm\left({\ep^1_{BS(O(\mathbf{1},j-1) \x O(q+1-j))}},2\right) \to \CC ob_{{}}^O(2,q)$$
 for $2 \leq j < (q+1)/2$ as follows.

We first define $\al_{1}$. 
Let $[m \co M^{1} \to \R^{2}]$ be an element of $\imm\left({\ep^1_{BS(O(1) \x O(q))}},2\right)$.
Then the normal bundle of the immersion $m$ is 
induced from the trivial line bundle $\ep^1_{BS(O(1) \x O(q))}$ by a map 
$$\mu \co M^{1} \to BS(O(1) \x O(q)).$$
By Lemma~\ref{kiterj} the symmetries in $S(O(1) \x O(q))$ extend to symmetries of the Morse function $h_1$.
Hence the inducing map $\mu$ yields a bundle with fiber 
 the Morse function $h_1$ and base space the $1$-dimensional manifold $M^{1}$.
 The Morse function $h_1$ has the source $S^{q+1}$ and the target $[-\ep, \ep]$ so
 this $h_1$-bundle over $M^1$ consists of an $S^{q+1}$ bundle over $M^1$, an $[-\ep, \ep]$ bundle over $M^1$ and
 a map $\be$ between the total spaces of these two latter bundles. The map $\beta$ fiberwise can be identified with $h_1$.
 We denote this $S^{q+1}$ bundle over $M^1$ by $p_s \co Q_{1,q,m}^{2+q} \to M^1$,
 this $[-\ep, \ep]$ bundle over $M^1$ by $p_t \co J \to M^1$ and then
 $\be$ maps $Q_{1,q,m}^{2+q}$ to $J$ (and as we said the restriction of $\be$ to a fiber $S^{q+1}$ can be identified with $h_1 \co S^{q+1} \to [-\ep, \ep]$). So we have the commutative diagram

\begin{center}
\begin{graph}(6,2)
\graphlinecolour{1}\grapharrowtype{2}
\textnode {A}(0.5,1.5){$Q_{1,q,m}^{2+q}$}
\textnode {B}(5.5, 1.5){$J$}
\textnode {C}(3, 0){$M^1$}
\diredge {A}{B}[\graphlinecolour{0}]
\diredge {B}{C}[\graphlinecolour{0}]
\diredge {A}{C}[\graphlinecolour{0}]
\freetext (3,1.8){$\beta$}
\freetext (1.2, 0.6){$p_s$}
\freetext (4.8, 0.6){$p_t$}
\end{graph}
\end{center}
where $\be$ maps fiberwise.
 
Obviously, since $M^1$ is immersed into $\R^2$ (by $m$), 
the total space of the normal bundle of $m$ is also immersed into $\R^2$ (by $\nu$, say).
Identify $J$ with the normal bundle of $m$ (in fact 
this normal bundle is a line bundle having an $[-\ep, \ep]$ subbundle and this and $J$
 are the same $[-\ep, \ep]$ bundle over $M^1$, i.e.\ both of them are induced by $\mu$) and
 compose $\be$ with $\nu$ to get
the fold map $\nu \circ \be$, which we denote by 
$$\va_{1,q, m} \co  Q_{1,q,m}^{2+q} \to \R^2.$$
Now
let $\al_{1}([m])$ be 
 the fold cobordism class of  $\va_{1,q, m}$. This definition depends only on the immersion cobordism class of $m$ as one can see easily by doing the analogous constructions for cobordisms.

The definition of the group homomorphism $\al_{j}$ is similar:
for an element $[m']$ in $\imm\left({\ep^1_{BS(O(\mathbf{1},j-1) \x O(q+1-j))}},2\right)$
 we define
the fold map 
 $\va_{j,q,m'} \co Q_{j,q,m'}^{2+q} \to \R^2$ and its
  cobordism class $\al_{j}([m'])$ for $j>1$ in a completely analogous way.
  
  Now for formal reasons let $O(\mathbf{1}, 0)$ denote just the group $O(1)$, which is not a very good notation but we will be using it.
    For convenience, we extend each $\al_j$ to the other $i \neq j$ summands of the group
$$\bigoplus_{1\leq i < (q+1)/2} \imm\left({\ep^1_{BS(O(\mathbf{1}, i-1) \x O(q+1-i))}},2\right)$$ as the identically zero homomorphism.

\subsection{Direct summands of fold cobordism groups}
    
 We have the following statement (see also    \cite[Remark~3.3]{Kal7}).
\begin{prop}\label{directsumincobgroup}
The homomorphism 
 $\sum_{1\leq j < (q+1)/2} \al_{j}$
 is an isomorphism onto a direct summand of $\CC ob_{{}}^O(2,q)$.
 The group $\CC ob_{{}}^O(2,q)$ contains $\Z_2^{2\lfloor \frac{q}{2} \rfloor}$ as a direct summand.
\end{prop}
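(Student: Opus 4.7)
My plan is to show that the family $(\csi_j^O)_{1 \leq j < (q+1)/2}$ is, up to a triangular perturbation, a one-sided inverse of $\sum \al_j$. Concretely I would consider the composition
\begin{multline*}
\bigoplus_i \imm\!\bigl(\ep^1_{BS(O(\mathbf{1}, i-1) \x O(q+1-i))},\, 2\bigr)
\xrightarrow{\sum \al_i} \CC ob^O(2, q) \\
\xrightarrow{\bigoplus \csi_j^O}
\bigoplus_j \imm\!\bigl(\ep^1_{BS(O(\mathbf{1}, j-1) \x O(q+1-j))},\, 2\bigr),
\end{multline*}
and argue that the associated matrix $\bigl[\csi_j^O \circ \al_i\bigr]$ (rows indexed by $j$, columns by $i$) is upper triangular with the identity on the diagonal. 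Such a matrix over abelian groups is automatically invertible, which yields simultaneously the injectivity of $\sum \al_j$ and a splitting exhibiting its image as a direct summand of $\CC ob^O(2,q)$.

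The diagonal identity $\csi_j^O \circ \al_j = \mathrm{id}$ should follow directly from the construction: given $[m \co M^1 \to \R^2]$, the index-$j$ fold singular component of $\va_{j,q,m}$ is the section of $p_s \co Q_{j,q,m} \to M$ cut out by the critical point $c$ of $h_j$; this section is diffeomorphic to $M$, and since $h_j(c)=0$ its image under $\nu \circ \be$ is exactly $m(M)$, with the structure group $S(O(\mathbf{1}, j-1) \x O(q+1-j))$ of its normal bundle recovered via Lemma~\ref{kiterj}. The triangular shape in turn follows by counting critical points: $h_i$ has exactly four of them, of indices $0, i-1, i, q+1$, so the absolute indices occurring in $S_{\va_{i,q,m}}$ lie in $\{0, i-1, i\}$. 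Hence for $j > i \geq 1$ one has $j \notin \{0, i-1, i\}$, the index-$j$ singular set of $\va_{i,q,m}$ is empty, and $\csi_j^O \circ \al_i = 0$. The below-diagonal entries (nontrivial only when $j = i-1$) are irrelevant once upper triangularity with unit diagonal is in place.

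For the final assertion I would use the identification $\imm(\ep^1_{BG_j}, 2) \cong \pi_1^s \oplus \pi_1^s(BG_j)$, with the two $\Z_2$ contributions measured geometrically by the double-point invariant $t_j$ and the twisting invariant $\tau_j$. A direct check gives $\pi_0\bigl(S(O(\mathbf{1}, j-1) \x O(q+1-j))\bigr) = \Z_2$ (the two components being distinguished by the common sign of the two determinant factors), so $H_1(BG_j; \Z) = \Z_2$ contributes a further $\Z_2$ to $\pi_1^s(BG_j)$ via the Atiyah--Hirzebruch spectral sequence. Explicit immersed circles with or without a double point and with or without a half-twist in the fold normal framing realize the four combinations, and summing over the $\lfloor q/2 \rfloor$ admissible values of $j$ yields the claimed $\Z_2^{2\lfloor q/2 \rfloor}$ summand.

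I expect the main obstacle to be the diagonal step: one needs to check that the local line bundle structure used by $\csi_j^O$ to record the normal line bundle of an index-$j$ fold germ matches, at the critical point $c$ of $h_j$, the line bundle carried by the $h_j$-bundle built from $\mu$ via Lemma~\ref{kiterj}, so that no reduction of framings is lost on the way from the original immersion $m$ to the recovered $\csi_j^O([\va_{j,q,m}])$. Once this compatibility is confirmed, the rest of the argument is formal matrix algebra plus the standard computation of $\pi_1^s(BG_j)$.
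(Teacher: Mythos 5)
Your proposal follows the paper's proof essentially verbatim: the paper too forms the composition $(\csi^O_1,\ldots,\csi^O_{\lfloor q/2\rfloor})\circ\sum_j\al_j$, observes that $\csi^O_j\circ\al_j=\mathrm{id}$ while $\csi^O_{j'}\circ\al_j$ can be nonzero only for $j'=j-1$ (since the indices occurring in $S_{\va_{j,q,m}}$ are $0$, $j-1$, $j$), and concludes from the resulting unit-diagonal triangular matrix that the map splits, with $\imm(\ep^1_{BG_j},2)\cong\pi_1^s\oplus\pi_1^s(BG_j)\cong\Z_2^2$ giving the direct summand. The only slip is a terminological one near the end — the nontrivial off-diagonal entries $j=i-1$ sit \emph{above} the diagonal (you prove the below-diagonal ones vanish) — but this does not affect the argument.
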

\begin{proof}
At first, notice that for all $2 \leq j < (q+1)/2$, since we consider immersions into the plane, the group 
$$\imm\left({\ep^1_{BS(O(\mathbf{1},j-1) \x O(q+1-j))}},2\right)$$ is the same as the group $$\imm\left({\ep^1_{BS(O(j) \x O(q+1-j))}},2\right),$$
so for all $j$ the homomorphisms $\al_j$ are in fact homomorphisms of type
$$\al_j \co \imm\left({\ep^1_{BS(O(j) \x O(q+1-j))}},2\right) \to \CC ob_{{}}^O(2,q).$$
As before, for convenience, we extend each $\al_j$ to the other $i \neq j$ summands of the group
$$\bigoplus_{1\leq i < (q+1)/2} \imm\left({\ep^1_{BS(O(i) \x O(q+1-i))}},2\right)$$ as the identically zero homomorphism.

Take the composition 
\begin{multline*}
\bigoplus_{1\leq j < (q+1)/2} \imm\left({\ep^1_{BS(O(j) \x O(q+1-j))}},2\right) \longrightarrow
\CC ob_{{}}^O(2,q) \longrightarrow  \\ \bigoplus_{1\leq j < (q+1)/2} \imm\left({\ep^1_{BS(O(j) \x O(q+1-j))}},2\right),
\end{multline*}
where the first arrow is the homomorphism $\sum_{1\leq j < (q+1)/2} \al_{j}$
and the second arrow is the homomorphism $(\csi^O_{1}, \ldots,  \csi^O_{\lfloor q/2 \rfloor})$.

If we show that this composition
$$(\csi^O_{1}, \ldots,  \csi^O_{\lfloor q/2 \rfloor})  \circ \sum_{1\leq j < (q+1)/2} \al_{j}$$
is an isomorphism, then the statement of the proposition follows.

For each $1 \leq j < (q+1)/2$, we fix a basis $\{ [i_j], [e_j] \}$ of the domain of $\al_j$, which is
\begin{multline*}
\imm\left({\ep^1_{BS(O(j) \x O(q+1-j))}},2\right) 
\cong 
%\pi^s_2(T\ep^1_{BS(O(j) \x O(q+1-j))}) \cong  \\ 
\pi_{1}^s \oplus
\pi_{1}^s\left( BS( O(j) \x O(q + 1-j))\right)
\cong \Z_2 \oplus \Z_2.
\end{multline*}
 Let $i_j \co S^1 \to \R^2$ be an immersion with $1$ double point and 
with trivial normal bundle induced from the bundle
$\ep^1_{BS(O(j) \x O(q+1-j))}$
by a constant map $S^1 \to {BS(O(j) \x O(q+1-j))}$.
Hence $[i_j]$ represents $(1, 0)$ in this $\Z_2 \oplus \Z_2$.
Let $e_j \co S^1 \to \R^2$ be an immersion without any multiple points  and 
with the normal bundle induced 
from the bundle $\ep^1_{BS(O(j) \x O(q+1-j))}$
by a map $S^1 \to {BS(O(j) \x O(q+1-j))}$
which represents the non-trivial element in 
\begin{multline*}
\Z_2 \cong 
\pi_{1}^s\left( BS( O(j) \x O(q + 1-j))\right) = \pi_{N+1}\left({S^N}BS( O(j) \x O(q + 1-j))\right)
\cong \\ H_{N+1}\left(S^N BS(O(j) \x O(q+1-j)); \Z\right) = H_{1}\left(BS(O(j) \x O(q+1-j)); \Z\right),
\end{multline*}
where ``$S^N$'' denotes the $N$th suspension for a large $N$.
Then the index $j$ indefinite fold singular set of $\va_{j,q,e_j}$ is a circle whose  normal bundle in $Q_{j,q,e_j}^{2+q}$ has the gluing transformation which
 is orientation reversing on both of the $O(j)$-  and $O(q+1-j)$-invariant subspaces, as one can see easily.
Then $[e_j]$ represents $(0,1)$ in $\Z_2 \oplus \Z_2$.

For a $1 \leq j' < (q+1)/2$, what are  $\csi^O_{j'} \circ \al_{j} ([i_j])$ and $\csi^O_{j'} \circ \al_{j} ([e_j])$?
We know that $\al_{j} ([i_j]) = [\va_{j,q,i_j}]$ and $\al_{j} ([e_j]) = [\va_{j,q,e_j}]$.
The fold maps $\va_{j,q,i_j}$ and $\va_{j,q,e_j}$ have fold singularities of absolute indices $0$, $j-1$ and $j$.
Besides the fact that $\csi^O_{j} ([\va_{j,q,i_j}]) = [i_j]$ and $\csi^O_{j} ([\va_{j,q,e_j}]) = [e_j]$
always hold we have that $\csi^O_{j'} ([\va_{j,q,i_j}]) $ or $\csi^O_{j'}([\va_{j,q,e_j}]) $ can be non-zero only if $j' = j-1$.
This shows that the matrix of 
 $$(\csi^O_{1}, \ldots,  \csi^O_{\lfloor q/2 \rfloor})  \circ \sum_{1\leq j < (q+1)/2} \al_{j}$$
is an upper triangular matrix with $1$s along the diagonal. This finishes the proof.
\end{proof}

\begin{rem}\label{symm}
A little more information is that we have 
$$\csi^O_{1} ([\va_{2,q,i_2}])= [i_1], \csi^O_{2} ([\va_{3,q,i_3}])= [i_2], \ldots, 
 \csi^O_{\lfloor q/2 \rfloor -1} ([\va_{\lfloor q/2 \rfloor,q,i_{\lfloor q/2 \rfloor}}]) = [i_{\lfloor q/2 \rfloor-1}].$$
 
 To have the similar result for the $[e_j]$s we have to see  how
 the symmetry of the Morse function $h_j$ which acts non-trivially on the critical point of index $j$
 acts on the critical point of index $j-1$ for $j-1 \geq 1$.
 By \cite[Proof of Lemma~3.2]{Kal7}) we have that for $2 \leq j \leq \lfloor q/2 \rfloor$,
 $$\csi^O_{j-1} ([\va_{j,q,e_{j}}])= [e_{j-1}].$$
\end{rem}

For the unoriented case let 
$$\tilde \al_j \co  \imm\left({\ep^1_{B(O(j) \x O(q+1-j))}},2\right) \to \CC ob_{{}}(2,q),$$ 
$1 \leq j < (q+1)/2$, be the homomorphisms like the $\al_j$.
Note that
\begin{multline*}
\imm\left({\ep^1_{B(O(j) \x O(q+1-j))}},2\right) 
\cong 
\pi_{1}^s \oplus
\pi_{1}^s\left( B( O(j) \x O(q + 1-j))\right)
\cong \Z_2 \oplus \Z_2 \oplus \Z_2.
\end{multline*}

Similarly to the previous arguments, we obtain

\begin{prop}\label{directsumincobgroupunori}
The homomorphism 
 $\sum_{1\leq j < (q+1)/2} \tilde \al_{j}$
 is an isomorphism onto a direct summand of $\CC ob(2,q)$.
 The group $\CC ob_{{}}(2,q)$ contains $\Z_2^{3\lfloor \frac{q}{2} \rfloor}$ as a direct summand.
\end{prop}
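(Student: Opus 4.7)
The plan is to follow the structure of the proof of Proposition~\ref{directsumincobgroup}, adjusting for the fact that the domain $\imm(\ep^1_{B(O(j)\times O(q+1-j))},2)$ of each $\tilde\al_j$ is now $\Z_2^3$ instead of $\Z_2^2$. The extra $\Z_2$ summand in $\pi_1^s(B(O(j)\times O(q+1-j)))\cong \Z_2\oplus\Z_2$ (versus $\pi_1^s(BS(O(j)\times O(q+1-j)))\cong \Z_2$) reflects that in the unoriented setting one tracks the first Stiefel--Whitney class of each of the two orthogonal factor sub-bundles separately, rather than only their combined sign.

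First I would fix a basis $\{[i_j],[e_j^1],[e_j^2]\}$ of each $\Z_2^3$ summand. The class $[i_j]$ is represented by an immersion $S^1\to\R^2$ with exactly one double point and constant inducing map, while $[e_j^1]$ and $[e_j^2]$ are represented by embeddings $S^1\to\R^2$ whose inducing maps $S^1\to B(O(j)\times O(q+1-j))$ realize the two $\Z_2$ generators of $H_1$ corresponding respectively to the two orthogonal factors. As in the proof of Proposition~\ref{directsumincobgroup}, Lemma~\ref{kiterj} only furnishes the extension of the subgroup $O(\mathbf{1},j-1)\times O(q+1-j)$, but the inclusion into $O(j)\times O(q+1-j)$ induces an isomorphism on $\pi_1^s$ of classifying spaces (the extra $O(1)$ coordinate contributes nothing new to $H_1(BO(j);\Z_2)\cong\Z_2$), so $\tilde\al_j$ is well-defined on the full group.

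Second I would compute the matrix of the composition
$$(\csi_1,\ldots,\csi_{\lfloor q/2\rfloor})\circ\sum_{1\leq j<(q+1)/2}\tilde\al_j$$
in these bases. The Morse function $h_j$ has critical points only of indices $0,j-1,j$, so for $j'\notin\{j-1,j\}$ the block $\csi_{j'}\circ\tilde\al_j$ vanishes; this makes the matrix block upper triangular. The diagonal block $\csi_j\circ\tilde\al_j$ is the identity, since by construction restricting $\va_{j,q,\bullet}$ to its index-$j$ fold locus recovers the original immersion $\bullet$ together with the two independent normal bundle twistings. Consequently the matrix is block upper triangular with $3\times 3$ identity diagonal blocks, hence invertible over $\Z_2$. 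This forces $\sum\tilde\al_j$ to split injectively into $\CC ob(2,q)$, yielding the claimed direct summand $\Z_2^{3\lfloor q/2\rfloor}$.

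The main obstacle I expect is the verification that $\csi_j\circ\tilde\al_j$ is genuinely the identity on each of the three basis classes, which requires checking that the two twistings encoded by $[e_j^1]$ and $[e_j^2]$ remain \emph{independently} detectable by $\csi_j$ in the unoriented setting, and cannot be collapsed by the $\va_{j,q,\bullet}$ construction to a single twisting. This is an essentially direct check from the local fold model combined with Lemma~\ref{kiterj}, entirely analogous to the oriented computation underlying Remark~\ref{symm}, but done for the full unoriented structure group rather than its orientation-preserving subgroup; once it is in hand, the upper triangular argument finishes the proof just as in Proposition~\ref{directsumincobgroup}.
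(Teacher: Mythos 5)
Your proposal is correct and takes the same route as the paper. The paper's own proof of this proposition is essentially a one-line reduction to the argument for Proposition~\ref{directsumincobgroup} ("completely analogous... details are left to the reader"), namely: fix the standard basis $\{[i_j],[e_j^1],[e_j^2]\}$ of each $\Z_2^3$ summand using the two independent $\Z_2$ twistings of $\pi_1^s(B(O(j)\x O(q+1-j)))\cong\Z_2\oplus\Z_2$, build the Morse-function-bundle fold maps, and check that the matrix of $(\csi_1,\dots,\csi_{\lfloor q/2\rfloor})\circ\sum\tilde\al_j$ is non-singular over $\Z_2$. Your write-up supplies exactly the details the paper defers: the handling of the passage from $O(\mathbf 1,j-1)\times O(q+1-j)$ (which Lemma~\ref{kiterj} actually extends) to $O(j)\times O(q+1-j)$ via the isomorphism it induces on $\pi_1^s$ of classifying spaces, and the block upper-triangular shape with $3\times 3$ identity diagonal blocks coming from the fact that $h_j$ has critical indices $0,j-1,j,q+1$ only and $\csi_j\circ\tilde\al_j$ is the identity. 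So this is a correct and slightly more explicit version of the paper's intended argument rather than a different one.
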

\begin{proof}
The proof is completely analogous to the proof of Proposition~\ref{directsumincobgroup}.

For each $1 \leq j < (q+1)/2$, we take the standard basis $\{ [i_j], [e_j^1], [e_j^2] \}$ of the domain $\Z_2^3$ of $\tilde \al_j$.
The immersions $i_j, e_j^1$ and $e_j^2$
are defined as follows.
Each of them maps $S^1$ into $\R^2$, the immersion $i_j$ has one double point, the immersions 
$e_j^1$ and $e_j^2$ have no multiple points.
The normal bundle of each of them 
 is induced from the bundle $\ep^1_{B(O(j) \x O(q+1-j))}$.
 The normal bundle of $i_j$ is induced by the constant map $S^1 \to B(O(j) \x O(q+1-j))$,
 the normal bundle of $e_j^1$ by a map $S^1 \to B(O(j) \x O(q+1-j))$
 which represents the element $(1, 0)$ in $\pi_{1}^s\left( B( O(j) \x O(q + 1-j))\right) \cong \Z_2 \oplus \Z_2$, i.e.\
 twists the $O(j)$ component but not the $O(q + 1-j)$ component,
  and
 the normal bundle of $e_j^2$ by a map $S^1 \to B(O(j) \x O(q+1-j))$
 which represents the element $(0, 1)$ in $\pi_{1}^s\left( B( O(j) \x O(q + 1-j))\right)$ so
 it does not twist the $O(j)$ component but twists the $O(q + 1-j)$ component.

Then we construct the fold maps $\tilde \va_{j, q, m} \co \tilde Q_{j, q,  m}^{2+q} \to  \R^2$ just like in the oriented case, where 
$m$ runs over the elements of $\{ i_j, e_j^1, e_j^2 : 1 \leq j < (q+1)/2 \}$.

Then we show that the matrix of the homomorphism
$$(\csi_{1}, \ldots,  \csi_{\lfloor q/2 \rfloor})  \circ \sum_{1\leq j < (q+1)/2} \tilde \al_{j}$$
is non-singular (over the field $\Z_2$).
Details are left to the reader.
\end{proof}

\section{Fold maps into the plane and a Poincar\'e-Hopf type formula for the signature}\label{foldmapsplanePoinHopf}

\subsection{Results about oriented fold maps}
Let $n=2$, $q \geq 0$  and 
let $0 \leq \la \leq (q+1)/2$.
Recall from Section~\ref{geomcobinv}
that the group
$$ \pi_{1}^s \oplus \pi_{1}^s\left( BS( O(\la) \x O(q + 1-\la))\right)$$
for $0 \leq \la < (q+1)/2$ and the group
$$\imm({\tilde l^1},2)$$ for the case of $q$ odd and $\la = (q+1)/2$,
which are the targets of the homomorphisms $\csi_{\la}^O$, are isomorphic to $\Z_2 \oplus \Z_2$ if $\la \geq 1$ and  to
 $\Z_2 \oplus \{ 0 \}$ if $\la = 0$.
Recall that the first component of the homomorphism $\csi_{\la}^O$, where $0 \leq \la \leq (q+1)/2$,
i.e.\ the homomorphism 
$$t_{\la} \co  \CC ob_{{}}^O(2, q) \to \Z_2$$
maps a fold cobordism class $[f]$ to the cobordism class of the immersion 
$$f|_{S_{\la}(f)} \co S_{\la}(f) \to \R^2$$
of the $1$-dimensional 
manifold $S_{\la}(f)$.
The cobordism class of this immersion 
  is an element\footnote{If the immersion $f|_{S_{\la}(f)}$ is in general 
position, then its cobordism class is equal to the number of its double points modulo $2$. Since
our fold maps are in general position, $f|_{S_{\la}(f)}$ is also in general position.} in $\Z_2$.
Simplifying the notation we often refer to $t_{\la}([f])$ as $t_{\la}(f)$. It is just the  mod $2$ number of double points 
of the $f$-image in $\R^2$ of the  index $\la$ fold singular set  of the generic fold map $f$. 

The second component of the homomorphism $\csi_{\la}^O$ for $0 \leq \la \leq (q+1)/2$ is
$$\tau_{\la} \co  \CC ob_{{}}^O(2, q) \to \Z_2,$$
which
maps a fold cobordism class $[f]$ to the sum $\tau_{\la}([f]) = \sum_{r} \tau_{\la,r}([f])$ mod $2$, where 
$\tau_{\la,r}([f])$ is $0$ if 
\begin{itemize}
\item
the
$S(O(\la) \x O(q+1-\la))$ bundle is trivial 
in the case of $0 \leq \lambda < (q+1)/2$ and
\item
the $S{\left\langle O\left(\frac{q+1}{2}\right) \x O\left(\frac{q+1}{2}\right), T \right\rangle}$ bundle is trivial
in the case of $q$ odd and $\lambda = (q+1)/2$
\end{itemize}
 over the $r$th 
component of the $1$-dimensional manifold
$S_{\la}(f)$, and $1$ otherwise\footnote{This implies that $\tau_0 \co  \CC ob^O(2, q) \to \Z_2$ is always the  zero homomorphism.}, 
cf.\ Remark~\ref{nfrori}.

\begin{rem}\label{detnyaltwist}
In other words, when $q$ is even,
$\tau_{\la}([f])$ is equal to the first Stiefel-Whitney number 
$\langle w_1(\delta_{\la}(f)), [S_{\la}(f)]\rangle$
of the determinant bundle $\delta_{\la}(f)$ of the $O(\la)$
bundle obtained by the projection $S(O(\la) \x O(q+1-\la)) \to O(\la)$ over $S_{\la}(f)$.
\end{rem}
We refer to $\tau_{\la}([f])$ as $\tau_{\la}(f)$ and say it is the {\it twisting} of the index $\la$ fold germs over $S_{\la}(f)$.
Now, take  the homomorphisms
$$t \co \CC ob_{{}}^O(2,q) \to \Z_2,$$
$$t = \sum_{0 \leq \la \leq (q+1)/2} t_{\la}$$
 and
$$\tau \co \CC ob_{{}}^O(2,q) \to \Z_2,$$
$$\tau = \sum_{0 \leq \la \leq (q+1)/2} \tau_{\la}.$$

Some results of \cite{Chess} can be reformulated as follows.

\begin{thm}[Chess \cite{Chess}]
Let $f \co M^{2k+1} \to \R^2$ be a fold map of a closed orientable manifold.
Then
\[
 t(f) + \tau(f) \equiv \left \{
 \begin{array}{ll}
 0 \mod{2} & \mbox{if $k$ is odd,} \\
 w_2 w_{2k-1}[M^{2k+1}]  \mod{2} & \mbox{if $k$ is even.}
 \end{array}
 \right. \]
\end{thm}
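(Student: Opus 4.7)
The plan is to treat both sides as homomorphisms $\CC ob_{fr}^O(2, 2k-1) \to \Z_2$ and verify the identity on generators, paralleling the strategy used for Theorem~\ref{introsign}. Additivity of the left-hand side is built into the definitions of the geometric cobordism invariants $\csi_\lambda^O$ in Section~\ref{geomcobinv}; additivity of the right-hand side holds because Stiefel-Whitney numbers are cobordism invariants, so $[f] \mapsto w_2 w_{2k-1}[M]$ factors through the composition $\CC ob_{fr}^O(2, 2k-1) \xrightarrow{\si_{2,2k-1}^O} \CC_{2k+1}^O(2) \to \mathfrak{N}_{2k+1}$.

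The next step is to identify a generating set adapted to Theorem~\ref{invarithm}, which injects $\CC ob_{fr}^O(2, 2k-1)$ into the direct sum $\CC_{2k+1}^O(2) \oplus \bigoplus_{1 \leq \lambda \leq k}\bigl(\Z_2 \oplus \Z_2\bigr)$. For the $\csi_\lambda^O$-summands I would use the Morse function bundle generators $\varphi_{j, 2k-1, i_j}$ and $\varphi_{j, 2k-1, e_j}$ of Section~\ref{mobuimm}; by the triangular structure of Proposition~\ref{directsumincobgroup} together with Remark~\ref{symm}, these detect the immersion invariants independently. For the $\CC_{2k+1}^O(2)$-summand I would use the surjectivity of $\si^{O}_{2,2k-1}$ from Proposition~\ref{homsurj}, together with Theorem~\ref{relexifold}, to realize each stably $1$-framed cobordism class by a framed fold map whose singular set is as simple as possible.

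On the Morse function bundle generators the source $Q_{j, 2k-1, m}^{2k+1}$ is an $S^{2k}$-bundle over $S^1$ with structure group reduced to $S(O(j) \x O(2k+1-j))$, so $TQ \cong p^{*}TS^1 \oplus V$ for a vertical bundle $V$ classified by $S^1 \to BS(O(j) \x O(2k+1-j))$. The Whitney product formula and the Gysin sequence of the sphere bundle then express $w_2 w_{2k-1}[Q]$ entirely in terms of the twisting data of $V$; on the fold side the contributions to $t$ and $\tau$ are read directly from the double points of the immersion $m$ and the normal bundle class it pulls back. The matching computation on the $\CC_{2k+1}^O(2)$-generators should reduce to showing that on manifolds admitting fold maps with controllable singular sets the combination $w_2 w_{2k-1}[M]$ is accounted for by the stable framing data, which can in principle be extracted from Koschorke's description of $\CC_{2k+1}^O(2)$.

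The hard part will be the characteristic-class bookkeeping on the Morse function bundle generators, and this is where the dichotomy between $k$ odd and $k$ even enters. The parity of $2k-1$ determines whether the $w_2$ produced by a non-trivial normal-bundle twist can pair non-trivially with the $(2k-1)$-dimensional vertical cohomology of the sphere bundle; for $k$ odd a pairing between the strata of absolute indices $\{j, 2k+1-j\}$ enforces cancellation and yields $t + \tau \equiv 0$, whereas for $k$ even the residual contribution is precisely $w_2 w_{2k-1}[M]$. Once the identity is verified on both families of generators, injectivity of $\Im_{2,2k-1}^O$ closes the argument.
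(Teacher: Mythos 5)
The paper does not prove this theorem: it is attributed to Chess \cite{Chess}, and the display is introduced with ``Some results of \cite{Chess} can be reformulated as follows.'' So there is no in-paper proof to compare yours against; Chess's own argument (via the de Rham invariant) is quite different from the framework of cobordism invariants $\Im_{n,q}$ developed here. Your proposal is therefore best read as an attempt to \emph{re-derive} Chess's result by the same ``inject into invariants, then check on generators'' strategy that the paper uses for Theorem~\ref{sign}.

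As a strategy this is reasonable, but there are concrete gaps. First, for $q = 2k-1$ odd the middle absolute index is $\la = (q+1)/2 = k$, and its immersion invariant $\csi_k$ takes values in $\imm(\tilde l^1,2)$ (respectively $\imm(\ep^1_{B(O(k)\x O(k))},2)$ in the framed setting). The Morse function bundle generators $\varphi_{j,q,i_j}$, $\varphi_{j,q,e_j}$ of Section~\ref{mobuimm} and Proposition~\ref{directsumincobgroup} are constructed only for $1 \le j < (q+1)/2$, so they span at most a $\Z_2^{2(k-1)}$ summand and do \emph{not} see the $\csi_k$-coordinate. Your generating set is therefore incomplete: you would need additional fold maps realizing nontrivial values of $t_k$ and $\tau_k$, and Lemma~\ref{kiterj} does not cover the middle index (the relevant symmetry group is $\langle O(k)\x O(k), T\rangle$, and $T$ reverses the target orientation, so the $h_j$-bundle construction as written does not apply). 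Second, the $\CC^O_{2k+1}(2)$-summand: for the $4k$-dimensional case the paper realizes $\Omega_{4k}^{2|\chi}$-generators by bundles over $S^2$ via \cite{AK} plus the explicit map $f_C$ on $\CP^{2k}\#\CP^{2k}$; you gesture at surjectivity of $\si^O_{2,2k-1}$ and Theorem~\ref{relexifold}, but to actually evaluate $w_2w_{2k-1}[M]$ you still need concrete representatives with controllable singular sets, and no analogue of Proposition~\ref{fiberedmaprelOK} or Proposition~\ref{prop2CP^2-nfoldmapproof-al} is available for $\CC^O_{2k+1}(2)\cong\Omega_{2k+1}$. Third, the claimed mechanism for the $k$-odd versus $k$-even dichotomy --- a ``pairing between strata of absolute indices $\{j,2k+1-j\}$'' --- is stated but not made precise; this is exactly the place where the argument has to produce $w_2 w_{2k-1}$, and your proposal explicitly defers that computation. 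Until the middle-index generators are produced and the characteristic-class computation is carried out, this is a plan rather than a proof.
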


We are looking for a similar result if the dimension of the  source manifold of a fold map is divisible by $4$.
To achieve this, we will compute some related cobordism groups of fold maps.

Framed fold maps with even codimension are naturally identified with fold maps.
Hence when $n = 2$, $q \geq 0$ and $q = 2q'$ is even, 
Theorem~\ref{invarithm} says that $\Im_{2,q}^O$ is an injective   homomorphism   from the group $\CC ob_{{}}^{O}(2,q)$ 
 to the group
 \begin{multline*}
 {\CC}_{2+q}^O{(2)} \oplus 
 \pi^s_{1} \oplus \pi^s_{1}\left(BS\left(O\left(1\right) \x O\left(q\right)\right)\right) \oplus \cdots \oplus \pi^s_{1} \oplus \pi^s_{1}\left(BS\left(O\left(\frac{q}{2}\right) \x O\left(\frac{q}{2}+1\right)\right)\right).
 \end{multline*}
This large direct sum is actually isomorphic to ${\CC}_{2+q}^O{(2)}  \oplus \Z_2^{q}$.
Then we have the following.
\begin{thm}\label{explsikbathm}
For $k \geq 1$, the group  $\CC ob_{{}}^O(2, 4k-2)$
is isomorphic to $\Omega_{4k}^{2|\chi} \oplus \Z_2^{4k-2}$.
An isomorphism is given by the map
$$[f \co M^{4k} \to \R^2] \mapsto \left( [M^{4k}],  t_1(f), \tau_{1}(f), \ldots, t_{2k-1}(f), \tau_{2k-1}(f) \right).$$
\end{thm}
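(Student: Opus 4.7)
The plan is to show that the map in the statement is essentially $\Im_{2,4k-2}^{O}$ under a natural identification of the target, invoke the injectivity already proved, and then demonstrate surjectivity by splitting off the two summands.

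First I would match up the target. Since $q = 4k-2$ is even, framed and unframed fold maps agree, and Corollary~\ref{foldcobcor} gives injectivity of $\Im_{2,4k-2}^{O}$. By Proposition~\ref{koschizom}(ii), the forgetful map identifies $\CC_{4k}^{O}(2)$ with $\Omega_{4k}^{2|\chi}$, so $\si_{2,4k-2}^{O}([f])$ is recorded by $[M^{4k}]$. For each $1 \le \la \le 2k-1$, the range of $\csi_{\la}^{O}$ is
\[
\imm\!\left(\ep^1_{BS(O(\la)\x O(4k-1-\la))},2\right)\cong \pi_1^s\oplus \pi_1^s(BS(O(\la)\x O(4k-1-\la)))\cong \Z_2\oplus \Z_2,
\]
whose two coordinates are exactly $t_{\la}(f)$ and $\tau_{\la}(f)$. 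Summing over $\la$ yields $\Z_2^{4k-2}$, and the map in the statement is nothing but $\Im_{2,4k-2}^{O}$ composed with these identifications; hence it is injective.

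Next I would prove surjectivity onto each summand separately. Surjectivity onto $\Omega_{4k}^{2|\chi}$ is immediate from Proposition~\ref{homsurj}: $\si_{2,4k-2}^{O}$ is surjective onto $\CC_{4k}^{O}(2)\cong \Omega_{4k}^{2|\chi}$. For the $\Z_2^{4k-2}$ summand I would invoke Proposition~\ref{directsumincobgroup} with $q = 4k-2$: the homomorphism $\sum_{1\le j \le 2k-1}\al_{j}$ embeds $\Z_2^{4k-2}$ into $\CC ob^{O}(2,4k-2)$, and its composition with $(\csi_{1}^{O},\ldots,\csi_{2k-1}^{O})$ is upper triangular with unit diagonal and therefore an isomorphism onto the $\Z_2^{4k-2}$ factor.

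To combine the two surjectivities into one I need to know that the Morse function bundle representatives $\va_{j,4k-2,m}$ contribute $0$ to the first coordinate. Their source manifolds $Q_{j,4k-2,m}^{4k}$ are $S^{4k-1}$-bundles over a disjoint union of circles (the immersed curve $m$), with structure group in the connected group $SO(4k)$. Such an oriented sphere bundle over $S^1$ is trivial, and in any event bounds the associated disk bundle over $D^2$; so $[Q_{j,4k-2,m}]=0$ in $\Omega_{4k}^{2|\chi}$. Thus the $\Z_2^{4k-2}$ subgroup lies in the kernel of the first coordinate. Given any $(\omega,\mathbf{x})\in \Omega_{4k}^{2|\chi}\oplus\Z_2^{4k-2}$, pick $[f]$ with $\si_{2,4k-2}^{O}([f])=\omega$ and write its second-coordinate vector as $\mathbf{y}$; adding the unique combination of $\al_j$-classes whose image is $(0,\mathbf{x}-\mathbf{y})$ produces a class mapping to $(\omega,\mathbf{x})$. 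Combined with injectivity, this proves the stated isomorphism.

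The main obstacle is the decoupling argument in the last paragraph, i.e.\ verifying that the $\varphi_{j,4k-2,m}$ source manifolds vanish in $\Omega_{4k}^{2|\chi}$; without this, one only knows the image surjects onto each factor, not the direct sum itself. Everything else is a direct assembly of Corollary~\ref{foldcobcor}, Proposition~\ref{koschizom}(ii), Proposition~\ref{homsurj} and Proposition~\ref{directsumincobgroup}.
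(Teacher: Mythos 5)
Your proposal is correct and follows essentially the same route as the paper's proof: injectivity from Theorem~\ref{invarithm}/Corollary~\ref{foldcobcor}, identification of the target via Proposition~\ref{koschizom}(ii), the direct-summand structure from Proposition~\ref{directsumincobgroup}, and — the crucial step — verifying that the Morse function bundle source manifolds bound, so the $\Z_2^{4k-2}$ summand lands in the kernel of the $\Omega_{4k}^{2|\chi}$-coordinate. The paper packages the final assembly into an abstract Lemma~\ref{groupisomlemma} (case (1)) and cites Levine directly for surjectivity of $\si^O_{2,4k-2}$, whereas you carry out the linear-algebra decoupling inline and invoke Proposition~\ref{homsurj}; these are cosmetic differences, not a distinct argument.
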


For example, the group $\CC ob_{{}}^O(2,2)$ is isomorphic to 
$\Z \oplus \Z_2 \oplus \Z_2$, and an isomorphism is given by the homomorphism
$$[f \co M^{4} \to \R^2] \mapsto \left( \frac{\si(M^4)}{2} , [f|_{S_1(f)}] , \langle w_1(\delta_1(f)), [S_1(f)]\rangle\right).$$

The next proposition will be important for us. We will prove later.

\begin{prop}\label{prop2CP^2-nfoldmap}
For $k \geq 1$ there is an oriented fold map $f \co \CP^{2k} \# \CP^{2k} \to \R^2$
such that $t(f)  \equiv 0 \mod{2}$ and
$\tau(f) \equiv 1 \mod{2}$.
\end{prop}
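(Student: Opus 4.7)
The plan is to construct $f_C$ by a direct handle-theoretic assembly on $\CP^{2k}\#\CP^{2k}$ and then compute $t(f_C)$ and $\tau(f_C)$ by inspection of its singular set. Since $\chi(\CP^{2k}\#\CP^{2k}) = 4k$ is even, fold maps on this manifold to $\R^2$ exist by Levine's theorem, or equivalently by Ando's h-principle (Theorem~\ref{Andothm}), because $T(\CP^{2k}\#\CP^{2k})\oplus\ep^1$ admits a fiberwise epimorphism onto $T\R^2$. The real task is to produce a specific fold map realizing $t=0$ and $\tau=1$.

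First I would build $f_C$ adapted to a handle decomposition of $\CP^{2k}\#\CP^{2k}$ consisting of a $0$-handle, $2k$-handles whose cores generate $H_{2k}(\CP^{2k}\#\CP^{2k};\Z)=\Z\oplus\Z$, and a $4k$-handle. On the $0$- and $4k$-handles, use the definite fold germ $(x_1,\ldots,x_{4k})\mapsto(x_1,\sum_{i\geq 2}x_i^2)$, contributing two closed definite fold circles that bound disks in $\R^2$. On each $2k$-handle $D^{2k}\x D^{2k}$, use the standard indefinite fold model $(u,v)\mapsto(|u|^2-|v|^2,s)$ parametrized by an arc $s$ in $\R^2$, which produces a fold arc along the core of the handle. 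The key point is to choose the framings of the $2k$-handle attachments so that, in combining with the $0$-handle model, one of the resulting fold circles inherits a twist in its normal bundle: this is made possible by the orientation-reversing element in the symmetry group of the indefinite fold germ which swaps orientations on the $O(\lambda)$- and $O(q{+}1{-}\lambda)$-invariant summands. Concretely, this amounts to attaching the $2k$-handle with a framing differing from the ``round'' one by a generator of the kernel of a stabilization map on $\pi_{2k-1}(O(2k))$, which is realized naturally by the non-triviality of the tangent bundle of $\CP^{2k}$ restricted to a hyperplane $\CP^{2k-1}$.

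Finally, I would compute the invariants. The singular set $S_{f_C}$ consists of the definite fold circles (from the $0$- and $4k$-handle caps) together with the indefinite fold circle formed by the twisted $2k$-handle; the remaining $2k$-handle fold arc can be canceled against one of the definite fold circles by a local Saeki-type cancellation. By positioning the images in $\R^2$ in disjoint sub-disks (using the freedom to choose the parametrizing arcs $s$ of the $2k$-handles), the immersed singular set has no double points, giving $t(f_C)\equiv 0\pmod 2$. The framing choice in the twisted $2k$-handle ensures that $\tau(f_C)\equiv 1\pmod 2$, with no contribution from the definite folds (whose normal bundles are automatically trivial, so contribute $0$) and no cancellation from the other indefinite fold circle (which has been eliminated). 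The main obstacle I anticipate is showing that the local models match into a globally smooth fold map and that the twist introduced in the framing genuinely survives as a non-trivial first Stiefel-Whitney class of the relevant line bundle over $S^1$ after all handles are assembled; this requires a careful bookkeeping of the framing obstruction at each handle boundary and use of the fact that $H_1(\CP^{2k}\#\CP^{2k};\Z_2)=0$ does \emph{not} prevent a non-trivial twist because $\tau$ is a local invariant of the fold germ structure, not a homotopical invariant of the singular set as a loop in the ambient manifold.
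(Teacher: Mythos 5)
Your approach is genuinely different from the paper's: you propose to build the fold map handle-by-handle on $\CP^{2k}\#\CP^{2k}$ directly, whereas the paper starts from Levine's stable map $g\co \CP^{2k}\to\R^2$ with a single cusp of index $2k-1$, cuts out a ball around the cusp in two parallel copies of $g$, and reglues along the bounding $S^{4k-1}$ by an explicit orientation-reversing automorphism $(\va,\psi)$ of the resulting Morse function with four critical points (the linear map $(x_1,\ldots,x_{4k})\mapsto(x_{2k+1},\ldots,x_{4k},x_1,\ldots,x_{2k-1},-x_{2k})$). That gluing is what produces the $\tau=1$ twist on exactly one circle of $S_{f_C}$, while $t=0$ is immediate because the whole map is assembled from two identical copies of $g$ and the gluing introduces no new double points.

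Unfortunately your proposal has gaps that I do not see how to close. First, the handle decomposition you describe (one $0$-handle, some $2k$-handles, one $4k$-handle) does not exist for $k>1$: since $H_{2j}(\CP^{2k}\#\CP^{2k};\Z)\neq 0$ for every $0\le j\le 2k$, any handle decomposition must contain handles in every even index up to $4k$, so for $k\ge 2$ the combinatorics of the fold arcs and circles is much more involved than you indicate and the simple picture of ``two definite caps plus a few middle handles'' breaks down. Second, the central claim --- that a suitable framing of the middle handle attachment, tied to the nontriviality of $T\CP^{2k}|_{\CP^{2k-1}}$, forces a nontrivial twist on the indefinite fold circle --- is the heart of the matter and is only asserted, not argued. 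In the paper this is replaced by the explicit automorphism $\va$ of the Morse function $h\co S^{4k-1}\to\R$, whose effect on the unstable manifolds of the index $2k-1$ and $2k$ critical points is computed directly; the nontrivial framing on the index $2k-1$ fold circle then falls out. You would need an analogous explicit computation, and it is not clear your framing-obstruction bookkeeping on $\pi_{2k-1}(O(2k))$ delivers it. Third, the ``local Saeki-type cancellation'' of a fold arc against a definite fold circle is not a standard move and is not justified; fold cancellation has delicate hypotheses, and invoking it without checking them leaves the structure of $S_{f_C}$ undetermined, which in turn means neither $t(f_C)=0$ nor $\tau(f_C)=1$ is actually established.
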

\begin{proof}
This is the same statement as Proposition~\ref{prop2CP^2-nfoldmapproof-al}, see the proof there.
\end{proof}

%If the dimension of the oriented source manifold of a fold map is divisible by $4$, then 
%we have the following result.
We obtain the following Poincar\'e-Hopf type formula for the signature.

\begin{thm}\label{sign}
Let $k \geq 1$ and $f \co M^{4k} \to \R^2$
be a fold map of a closed oriented $4k$-dimensional manifold. Then
$$\frac{\si( M^{4k} ) }{2} \equiv t(f) + \tau(f)  \mod{2}.$$
%This congruence also holds for a fold map
% $f \co M^{4k} \to S^2$  of
% a closed oriented $4k$-dimensional manifold into the $2$-sphere if there exists a regular value
% $y \in S^2$ such that the fiber $f^{-1}(y)$ is an oriented null-cobordant $(4k-2)$-dimensional manifold.
 \end{thm}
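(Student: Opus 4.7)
The strategy is to recognize both sides of the claimed congruence as homomorphisms $\CC ob^O(2, 4k-2) \to \Z_2$ and then to verify equality on the explicit generating set provided by Theorem~\ref{explsikbathm}.

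First I would verify that both sides descend to $\Z_2$-valued fold cobordism invariants. The right-hand side $t(f)+\tau(f)$ is a homomorphism by construction, since $t$ and $\tau$ are the first- and second-coordinate homomorphisms of the cobordism invariants $\csi^O_{\la}$. For the left-hand side, if $F$ is an oriented fold cobordism from $f_0 \co M_0 \to \R^2$ to $f_1 \co M_1 \to \R^2$, then its source $X$ is an oriented cobordism between $M_0$ and $M_1$, so by Novikov additivity $\si(M_0)=\si(M_1)$. Moreover, on a closed oriented $4k$-manifold with even Euler characteristic, Poincar\'e duality forces $\si(M)\equiv b_{2k}(M)\equiv \chi(M)\equiv 0 \pmod 2$, so $\si(M)/2 \bmod 2$ is a well-defined homomorphism on $\Omega_{4k}^{2|\chi}$, and via the forgetful projection of Theorem~\ref{explsikbathm} on $\CC ob^O(2,4k-2)$.

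By Theorem~\ref{explsikbathm} it then suffices to check the formula on generators of each summand of $\Omega_{4k}^{2|\chi}\oplus \Z_2^{4k-2}$. For the $\Z_2^{4k-2}$ summand, generators are the Morse function bundle classes $[\va_{j,4k-2,i_j}]$ and $[\va_{j,4k-2,e_j}]$ of Section~\ref{mobuimm}. Their source manifolds are sphere bundles over $S^1$, hence null-cobordant as oriented manifolds, so $\si/2\equiv 0$. On the immersion side, for $\va_{j,q,i_j}$ the sets $S_j$ and $S_{j-1}$ are each immersed with one double point (using Remark~\ref{symm}), giving $t\equiv 1+1\equiv 0$ and $\tau\equiv 0$; for $\va_{j,q,e_j}$ both $S_j$ and $S_{j-1}$ are embedded with twisted normal bundles, giving $t\equiv 0$ and $\tau\equiv 1+1\equiv 0$. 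Hence both sides vanish on the $\Z_2^{4k-2}$ summand. For the $\Omega_{4k}^{2|\chi}$ summand, generators fall into two families. Fiber bundles over $S^2$ supplied by \cite[Theorem~3]{AK} carry fold maps into $\R^2$ on which $t$ and $\tau$ vanish and whose source has zero signature (since $\si(S^2)=0$ and Chern--Hirzebruch--Serre multiplicativity applies in our fiber/base setup), so both sides are $0$. The remaining generator $[\CP^{2k}\#\CP^{2k}]$ is handled by the specific map $f_C$ of Proposition~\ref{prop2CP^2-nfoldmap}: there $\si(\CP^{2k}\#\CP^{2k})/2 = 1$ matches $t(f_C)+\tau(f_C)\equiv 0+1\equiv 1 \pmod 2$. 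Agreement on a generating set plus the homomorphism property yields the formula on the whole group.

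The main obstacle is precisely the two geometric inputs that are not formal consequences of the cobordism-invariance machinery: constructing the map $f_C$ realizing $\tau\equiv 1$ on $\CP^{2k}\#\CP^{2k}$, and producing, on each Atiyah--Kawakubo fiber bundle generator, an oriented fold map whose invariants $t$, $\tau$ and source signature all vanish. I would isolate these as separate lemmas (the first is Proposition~\ref{prop2CP^2-nfoldmap}) and, once they are in hand, the argument above assembles them into the Poincar\'e--Hopf type formula $\si(M^{4k})/2\equiv t(f)+\tau(f)\pmod 2$.
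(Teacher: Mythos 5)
Your approach is essentially the same as the paper's: reduce via Theorem~\ref{explsikbathm} to checking the congruence on generators, handle the $\Z_2^{4k-2}$ summand via the Morse-function-bundle generators $\va_{j,4k-2,i_j}$, $\va_{j,4k-2,e_j}$, and handle $\Omega_{4k}^{2|\chi}$ by splitting a class into $r[\CP^{2k}\#\CP^{2k}]+[Z]$ with $Z$ a zero-signature bundle over $S^2$, using Propositions analogous to \ref{fiberedmaprelOK} and \ref{prop2CP^2-nfoldmapproof-al}. Your preliminary observation that both sides are $\Z_2$-homomorphisms and your multiplicativity argument for $\si(Z)=0$ are sound.

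One detail in your generator computation is off, though it does not affect the conclusion. For $\va_{1,4k-2,e_1}$ you write that both $S_1$ and $S_0$ are embedded with twisted normal bundles, giving $t\equiv 0$ and $\tau\equiv 1+1\equiv 0$. This is wrong in two ways: first, $\tau_0$ is structurally the zero homomorphism in the oriented setting (since $\pi_1^s\bigl(BS(O(0)\times O(q+1))\bigr)=0$), so the index-$0$ set never contributes to $\tau$; second, the paper's analysis shows the definite fold singular set of $\va_{1,4k-2,e_1}$ is in fact immersed with one double point after a small perturbation, so $t=1$. The correct values are $t(\va_{1,4k-2,e_1})=1$ and $\tau(\va_{1,4k-2,e_1})=1$, which still sum to $0$, so your final verification on this generator is right by a cancellation you did not actually compute. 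For $j\geq 2$ your reading matches the paper. You would want to fix this $j=1$ bookkeeping before presenting the argument, but the overall structure is the one the paper uses.
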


%\begin{rem}
%This congruence also holds for a fold map
% $f \co M^{4k} \to S^2$  of
% a closed oriented $4k$-dimensional manifold into the $2$-sphere if there exists a regular value
% $y \in S^2$ such that the fiber $f^{-1}(y)$ is an oriented null-cobordant $(4k-2)$-dimensional manifold.
%\end{rem}

\begin{cor}\label{4dimben}\noindent
An isomorphism $\CC ob_{{}}^O(2,2) \to \Z \oplus \Z_2^2$ is also given by the map
$$[f \co M^{4} \to \R^2] \mapsto \left( \si(M^{4})/2 , 
[f|_{S_0(f)}] , [f|_{S_1(f)}] \right).$$
\end{cor}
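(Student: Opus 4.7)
The plan is to derive this corollary from Theorem~\ref{explsikbathm} (with $k=1$) together with the Poincar\'e--Hopf formula of Theorem~\ref{sign}, by observing that the two proposed isomorphisms differ by an invertible $\Z$-linear change of coordinates on $\Z \oplus \Z_2^2$.

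First I would recall the baseline isomorphism. By Theorem~\ref{explsikbathm} applied to $k=1$, combined with the stated identification $\Omega_4^{2|\chi} \cong \Z$ via $[M^4] \mapsto \si(M^4)/2$ (generated by $[\CP^2 \# \CP^2]$), the map
\[
\Phi_1 \co \CC ob_{{}}^O(2,2) \lra \Z \oplus \Z_2 \oplus \Z_2, \qquad [f] \mapsto \Bigl(\si(M^4)/2,\; t_1(f),\; \tau_1(f)\Bigr)
\]
is an isomorphism. Here I use that $[f|_{S_1(f)}] \in \Z_2$ is precisely $t_1(f)$ (the mod $2$ double point count of the index $1$ fold immersion), and $\tau_1(f) = \langle w_1(\delta_1(f)), [S_1(f)]\rangle$ by Remark~\ref{detnyaltwist}.

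Next I would analyze the new map
\[
\Phi_2 \co [f] \mapsto \Bigl(\si(M^4)/2,\; [f|_{S_0(f)}],\; [f|_{S_1(f)}]\Bigr) = \Bigl(\si(M^4)/2,\; t_0(f),\; t_1(f)\Bigr).
\]
To relate $\Phi_2$ to $\Phi_1$, I apply Theorem~\ref{sign} (the Poincar\'e--Hopf formula for the signature) with $k=1$:
\[
\si(M^4)/2 \equiv t(f) + \tau(f) = t_0(f) + t_1(f) + \tau_0(f) + \tau_1(f) \pmod 2.
\]
Since $\pi_1^s(BS(O(0)\times O(3))) = \{0\}$, the invariant $\tau_0$ is identically zero, so the formula reduces to
\[
t_0(f) \equiv \si(M^4)/2 + t_1(f) + \tau_1(f) \pmod 2.
\]
This expresses the second coordinate of $\Phi_2$ as a $\Z_2$-linear function of the coordinates of $\Phi_1$; in particular, $\Phi_2 = L \circ \Phi_1$ where $L \co \Z \oplus \Z_2 \oplus \Z_2 \to \Z \oplus \Z_2 \oplus \Z_2$ is the homomorphism $(a,b,c) \mapsto (a,\; \bar a + b + c,\; b)$ with $\bar a$ denoting the mod $2$ reduction of $a$.

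Finally I would observe that $L$ is an isomorphism: it is the identity on the $\Z$ summand, and on the $\Z_2^2$ part it is the linear map with matrix $\begin{pmatrix} 1 & 1 \\ 1 & 0 \end{pmatrix}$, which is invertible over $\Z_2$. Hence $\Phi_2 = L \circ \Phi_1$ is an isomorphism, as claimed. No step here presents a real obstacle; the only point requiring care is correctly identifying $[f|_{S_\la(f)}]$ with $t_\la(f)$ and noting the vanishing of $\tau_0$, both of which are immediate from the definitions in Section~\ref{geomcobinv}.
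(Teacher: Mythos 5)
Your proof is correct and is exactly the argument the paper intends: the paper's own proof of Corollary~\ref{4dimben} is the one-line "This follows from Theorems~\ref{explsikbathm} and \ref{sign}," and your write-up simply makes explicit the unipotent change of coordinates $L$ on $\Z \oplus \Z_2^2$ supplied by the Poincar\'e--Hopf formula (using $\tau_0 \equiv 0$ and $[f|_{S_\la(f)}] = t_\la(f)$). Nothing is missing.
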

\begin{proof}
This follows from Theorems~\ref{explsikbathm} and \ref{sign}.
\end{proof}

\subsection{Results about non-oriented fold maps}

Now, for the unoriented case take  the homomorphisms
$$t_{\la} \co \CC ob_{{}}(2,4k-2) \to \Z_2$$ and
$$\tau_{\la} \co \CC ob_{{}}(2,4k-2) \to \Z_2 \oplus \Z_2,$$
where 
$0 \leq \la \leq 2k-1$.
Define $\tilde \tau_{\la}$ as the product  $\tau^1_{\la} \tau^2_{\la}$ taken in the field $\Z_2$ of the two components of 
$\tau_{\la}$ and
define
$$\tilde \tau = \sum_{0 \leq \la \leq 2k-1} \tilde \tau_{\la}.$$

It follows easily that  for $0 \leq \la \leq 2k-1$ we have the commutative diagrams

\begin{center}
\begin{graph}(6,2)
\graphlinecolour{1}\grapharrowtype{2}
\textnode {A}(0.5,1.5){$\CC ob_{{}}^O(2,4k-2)$}
\textnode {B}(5.5, 1.5){$\CC ob_{{}}(2,4k-2)$}
\textnode {C}(3, 0){$\Z_2$}
\diredge {A}{B}[\graphlinecolour{0}]
\diredge {B}{C}[\graphlinecolour{0}]
\diredge {A}{C}[\graphlinecolour{0}]
\freetext (3,1.8){$\iota$}
\freetext (1.1, 0.6){$t_{\la}$}
\freetext (4.9, 0.6){$t_{\la}$}
\end{graph}
\end{center}
and
\begin{center}
\begin{graph}(6,2)
\graphlinecolour{1}\grapharrowtype{2}
\textnode {A}(0.5,1.5){$\CC ob_{{}}^O(2,4k-2)$}
\textnode {B}(5.5, 1.5){$\CC ob_{{}}(2,4k-2)$}
\textnode {C}(3, 0){$\Z_2$}
\diredge {A}{B}[\graphlinecolour{0}]
\diredge {B}{C}[\graphlinecolour{0}]
\diredge {A}{C}[\graphlinecolour{0}]
\freetext (3,1.8){$\iota$}
\freetext (1.1, 0.6){$\tau_{\la}$}
\freetext (4.9, 0.6){$\tilde \tau_{\la}$}
\end{graph}
\end{center}
where the horizontal arrows denoted by $\iota$ are the natural forgetful homomorphisms.
Of course $\tilde \tau_{\la}$ is not necessarily a homomorphism. But the commutative diagram above
says that $\tilde \tau_{\la}$ restricted to the image $\iota \left( \CC ob_{{}}^O(2,4k-2) \right)$
is a homomorphism.

Notice that for $0 \leq \la \leq 2k-1$ the composition
\begin{equation*}
\begin{CD}
\CC ob_{{}}^O(2,4k-2) @>>> \CC ob(2,4k-2) @> \tau_{\la}^1 + \tau_{\la}^2 >>  \Z_2   \\
\end{CD}
\end{equation*}
is identically zero. Here the first arrow is the forgetful homomorphism and 
$\tau_{\la}^i$, $i = 1, 2$, denote the two components of $\tau_{\la}$.
We obtain  results in the unoriented case analogously to Theorems~\ref{explsikbathm} and \ref{sign}.

\begin{thm}\label{explsikbathmunori}
For $k \geq 1$, the cobordism group  $\CC ob_{{}}(2, 4k-2)$
is isomorphic to ${\mathfrak N}_{4k}^{2|\chi}  \oplus  \Z_2  \oplus \Z_2^{6k-3}$.
%An isomorphism is given by the map
%\begin{multline*}
%[f \co M^{2k} \to \R^2] \longmapsto \\ \left( [M^{2k}] ,  t_0(f) ,  t_1(f) , \tau_{1}^1(f) , \tau_{1}^2(f),  t_2(f) , \tau_{2}^1(f) , \tau_{2}^2(f),
%%[f|_{S_{2}(f)}] , \tau_{2}^1(f) , \tau_{2}^2(f), 
%\ldots,  t_{k-1}(f) , \tau_{k-1}^1(f) , \tau_{k-1}^2(f) \right).
%\end{multline*}
\end{thm}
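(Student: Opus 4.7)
The plan is to use the injection $\Im_{2,4k-2}$ from Theorem~\ref{invarithm} together with a splitting of $\CC ob(2,4k-2)$ coming from the Morse function bundle construction of Proposition~\ref{directsumincobgroupunori}.

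First I identify the target of $\Im_{2,4k-2}$. For $q=4k-2$, Theorem~\ref{invarithm} embeds $\CC ob(2,4k-2)$ into
\[
\CC_{4k}(2)\oplus\bigoplus_{\la=1}^{2k-1}\bigl(\pi^s_1\oplus \pi^s_1(B(O(\la)\x O(4k-1-\la)))\bigr).
\]
Proposition~\ref{koschizom}(i) gives $\CC_{4k}(2)\cong\Z_2\oplus\mathfrak{N}_{4k}^{2|\chi}$, and for each $\la$ in the range both factors of the product are non-trivial, so $\pi^s_1\oplus\pi^s_1(B(O(\la)\x O(4k-1-\la)))\cong\Z_2^3$ exactly as in the proof of Proposition~\ref{directsumincobgroupunori}. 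The full target is thus abstractly $\mathfrak{N}_{4k}^{2|\chi}\oplus\Z_2\oplus\Z_2^{6k-3}$, so it suffices to show $\Im_{2,4k-2}$ is onto.

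Next I produce a splitting $\CC ob(2,4k-2)=B\oplus B'$, where $B=\im\bigl(\sum_{j=1}^{2k-1}\tilde\al_j\bigr)$ and $B'=\ker(\csi_1,\dots,\csi_{2k-1})$. Proposition~\ref{directsumincobgroupunori} supplies this splitting and identifies $B\cong\Z_2^{6k-3}$ via $(\csi_1,\dots,\csi_{2k-1})$. Since $\csi_\la$ vanishes on $B'$, the restriction $\Im_{2,4k-2}|_{B'}$ equals $\si_{2,4k-2}|_{B'}$, and is injective by Theorem~\ref{invarithm}. To obtain surjectivity of $\si_{2,4k-2}|_{B'}$ onto $\CC_{4k}(2)$, I combine the surjectivity of $\si_{2,4k-2}$ on all of $\CC ob(2,4k-2)$ (Proposition~\ref{homsurj}) with the vanishing $\si_{2,4k-2}|_B=0$. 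The vanishing has two parts: the source $\tilde Q_{j,q,m}$ of a Morse function bundle $\tilde\va_{j,q,m}$ is an $S^{4k-1}$-bundle over a disjoint union of circles, hence bounds the associated $D^{4k}$-bundle and is null-cobordant, killing the $\mathfrak{N}_{4k}^{2|\chi}$-component; and by Proposition~\ref{koschizomexplained} the $\Z_2$-component is the mod $2$ count of double points of the singular-set image in $\R^2$, which is the union of $4$ parallel copies of $m(M^1)$ corresponding to the $4$ critical points of $h_j$. A short combinatorial count (self-intersections of each level curve plus transverse crossings between distinct level curves) shows that each double point of $m$ contributes $16$ intersection points to this image, which is always even.

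Combining the pieces yields
\[
\CC ob(2,4k-2)\cong B\oplus B'\cong \Z_2^{6k-3}\oplus\CC_{4k}(2)\cong \mathfrak{N}_{4k}^{2|\chi}\oplus\Z_2\oplus\Z_2^{6k-3},
\]
as claimed. The main obstacle will be justifying $\si_{2,4k-2}|_B=0$ in full, and in particular the triviality of its $\Z_2$-component, which requires the local combinatorial analysis of how the four level curves of $h_j$, immersed via the tubular neighborhood of $m$, intersect in $\R^2$; the null-cobordism of the source manifold (handled by the disk bundle it bounds) is comparatively straightforward.
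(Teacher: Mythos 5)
Your proposal diverges from the paper at the crucial point, and the divergence is a genuine gap rather than an alternate route.

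The paper also starts from the injection $\Im_{2,4k-2}$ and the splitting $\CC ob(2,4k-2)\cong A\oplus B$ with $B$ the Morse-function-bundle summand, and it also identifies $\CC_{4k}(2)\cong\mathfrak{N}_{4k}^{2|\chi}\oplus\Z_2$ via Proposition~\ref{koschizom}. But the paper does \emph{not} show $\si_{2,4k-2}|_B=0$; it proves the opposite, namely that $\im(\si_{2,4k-2}\circ\iota_B)=\Z_2$. Concretely, the paper asserts that the singular set of $\tilde\va_{1,4k-2,e_1^1}$ is immersed in $\R^2$ with an \emph{odd} number of double points, so by Proposition~\ref{koschizomexplained} the $\Z_2$-coordinate of $\si_{2,4k-2}([\tilde\va_{1,4k-2,e_1^1}])$ is nonzero. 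Your ``each double point of $m$ contributes $16$ crossings'' count only detects crossings coming from double points of the base immersion $m$; since $e_1^1$ is an embedding, your count returns $0$ for $\tilde\va_{1,4k-2,e_1^1}$, contradicting what the paper uses. You have not justified that the twisting of the $h_1$-bundle (the nontrivial monodromy in $O(1)\times O(q)$ defining $e_1^1$) contributes nothing to the double-point parity, and the paper's claim is precisely that it does contribute. This is exactly the source of the $\Z_2$ summand you need to keep separate.

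Because of this, your surjectivity argument has no support. Once $\si_{2,4k-2}|_B\neq 0$, the implication ``$\si$ surjective and $\si|_B=0$ $\Rightarrow$ $\si|_{B'}$ surjective'' is unavailable, and $\Im_{2,4k-2}$ being injective plus $\si_{2,4k-2}$ being surjective does not by itself give surjectivity of $\Im_{2,4k-2}$. The paper handles this with Lemma~\ref{groupisomlemma}: since $\im(\psi\circ\iota_B)=\Z_2$, either $(\psi,\pi_B)$ is already surjective (giving the result) or $\CC ob(2,4k-2)\cong\mathfrak{N}_{4k}^{2|\chi}\oplus\Z_2^{6k-3}$, and the second possibility is then ruled out by a separate contradiction argument comparing the invariant $\tilde\tau=\tau^1\tau^2$ on $\iota([f_C])$ (where $f_C\co\CP^{2k}\#\CP^{2k}\to\R^2$ has $t(f_C)=0$, $\tau(f_C)=1$) against what would be forced by the wrong isomorphism. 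Your proposal contains neither the correct computation of $\si_{2,4k-2}|_B$ nor the contradiction argument using $f_C$; both are needed to close the proof.
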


\begin{thm}\label{unorirel}
Let $k \geq 1$ and $f \co M^{4k} \to \R^2$
be a fold map of a closed  (possibly unorientable) $4k$-dimensional manifold. Then
$$\tau^1(f) \equiv \tau^2(f)   \mod{2}.$$
%This congruence also holds for a stable map
% $f \co M^{4k} \to S^2$  of
% a closed oriented $4k$-dimensional manifold into the $2$-sphere if there exists a regular value
% $y \in S^2$ such that the fiber $f^{-1}(y)$ is an oriented null-cobordant $(4k-2)$-dimensional manifold.
 \end{thm}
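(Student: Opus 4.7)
The plan is to mirror the proof of the oriented case (Theorem \ref{sign}): decompose $\CC ob(2, 4k-2)$ via Theorem \ref{explsikbathmunori}, identify representatives of generators, and verify the asserted identity on each. First I would observe that since the tangent bundle $TS_\lambda(f)$ is trivial over each circle component of the $1$-dimensional manifold $S_\lambda(f)$, the splitting $TM|_{S_\lambda(f)} = TS_\lambda(f) \oplus \nu_\lambda^1 \oplus \nu_\lambda^2$ (with $\nu_\lambda^1, \nu_\lambda^2$ the two summands corresponding to the structure group $O(\lambda) \times O(q+1-\lambda)$) gives componentwise
\[
\tau^1_\lambda(f) + \tau^2_\lambda(f) = \langle w_1(\nu_\lambda^1) + w_1(\nu_\lambda^2),\, [S_\lambda(f)]\rangle = \langle w_1(TM),\, [S_\lambda(f)]\rangle,
\]
and hence $\tau^1(f) + \tau^2(f) = \langle w_1(TM),\, [S_f]\rangle \in \Z_2$. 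In particular, whenever the source $M$ is orientable the identity is immediate since $w_1(TM) = 0$.

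Since $\tau^1, \tau^2 \co \CC ob(2, 4k-2) \to \Z_2$ are group homomorphisms, so is their sum, and by Theorem \ref{explsikbathmunori} it suffices to verify the vanishing on generators of each direct summand of
\[
\CC ob(2, 4k-2) \;\cong\; \mathfrak{N}_{4k}^{2|\chi} \,\oplus\, \Z_2 \,\oplus\, \Z_2^{6k-3}.
\]
The extra $\Z_2$ summand is generated by the unoriented class of the oriented fold map $f_C \co \CP^{2k} \# \CP^{2k} \to \R^2$ from Proposition \ref{prop2CP^2-nfoldmap}, so its source is orientable and the identity holds by the preceding paragraph. For the Morse-function-bundle summand $\Z_2^{6k-3}$ the generators are the maps $\tilde \va_{j,4k-2,m}$ with $m \in \{i_j, e_j^1, e_j^2\}$ from the proof of Proposition \ref{directsumincobgroupunori}. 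The case $m = i_j$ is immediate since all the induced bundles are trivial. For $m = e_j^1$, Lemma \ref{kiterj} together with the symmetry propagation argument of Remark \ref{symm} shows that the $O(j)$-twist at the index-$j$ critical point of $h_j$ propagates to an $O(j-1)$-twist at the index-$(j-1)$ critical point (through the subgroup $O(\mathbf{1}, j-1) \cong O(j-1)$) and to matching twists of the $O(q+1)$ structure at each of the two index-$0$ critical components; the resulting contributions to $\tau^1$ and to $\tau^2$ therefore appear in cancelling pairs, so $\tau^1 + \tau^2 \equiv 0 \pmod 2$. The case $m = e_j^2$ is symmetric, with the roles of $\tau^1$ and $\tau^2$ exchanged.

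The main obstacle is the summand $\mathfrak{N}_{4k}^{2|\chi}$, whose generators are supplied by fold maps of fiber bundles over $S^2$ constructed from the results of \cite{Br69}, in analogy with the use of \cite{AK} in the oriented case. For each such representative $f$ one needs to verify directly that $\langle w_1(TM),\, [S_f] \rangle = 0$; the hope is that the fiberwise Morse-function structure of the construction forces each component of $S_f$ to sit over the fold curve of the auxiliary map $S^2 \to \R^2$ as a trivially framed loop, so that $\tau^1_\lambda = \tau^2_\lambda = 0$ componentwise on each generator. This is where the substantive technical work is concentrated, and where the detailed structure of the bundles from \cite{Br69} enters essentially.
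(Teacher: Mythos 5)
Your proposal shares the paper's overall strategy: decompose $\CC ob(2,4k-2)$ by Theorem~\ref{explsikbathmunori} and verify the congruence on representatives of generators. Your reformulation
$$\tau^1(f)+\tau^2(f)=\langle w_1(TM),\,i_*[S_f]\rangle \in \Z_2,$$
coming from $w_1\bigl(TM|_{S_\lambda(f)}\bigr)=w_1(\nu_\lambda^1)+w_1(\nu_\lambda^2)$ (the tangent line of each circle of $S_\lambda(f)$ being trivial) and from $\tau^i_\lambda = \langle w_1(\nu^i_\lambda),[S_\lambda(f)]\rangle$, is correct and genuinely useful: it makes the verification for $f_C$ (and for any generator with orientable source) immediate, replacing the paper's appeal to Proposition~\ref{prop2CP^2-nfoldmapproof-al}. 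Your treatment of the Morse-function-bundle summand $\Z_2^{6k-3}$ is also in the spirit of Lemma~\ref{immgencong}, though the bookkeeping of exactly which factor ($O(j)$, $O(q+1-j)$) is twisted at the index-$(j-1)$ and index-$0$ critical loci should be made precise as in the paper's lemma; the slogan ``cancelling pairs'' glosses over the distinction between the cases $j=1$ (where the index-$0$ twist enters) and $j\ge 2$.

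The genuine gap is exactly where you put it: the $\mathfrak{N}_{4k}^{2|\chi}$ summand. The statement of the theorem requires verifying the vanishing on fold maps of $S^2$-fiber-bundles, and ``the hope is'' does not constitute a proof. The paper resolves this through Lemma~\ref{fiberedmaprelOKunori}, which repeats the concrete construction of Proposition~\ref{fiberedmaprelOK}: the fold map $f_z$ on the total space is built from a Morse function on $F^{4k-2}\times[0,1]$ over an annulus, so the image of $S_{f_z}$ consists of concentric circles in $\R^2$ and each $O(j)$ and $O(q+1-j)$ sub-bundle over each circle of $S_j(f_z)$ is trivial, giving $t=\tau^1=\tau^2=0$ directly.

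Finally, had you pushed your observation one step further you could have avoided the cobordism computation entirely: the Poincar\'e dual of $i_*[S_f]$ in $H^{q+1}(M;\Z_2)$ is $w_{q+1}(TM)$ (this is the class of the dependency locus of the two $1$-forms $df_1,\,df_2$ in $T^*M$), so
$$\tau^1(f)+\tau^2(f)=w_1w_{q+1}[M^{4k}]=w_1w_{4k-1}[M^{4k}],$$
and the Wu relations give $w_{4k-1}=Sq^{2k-1}v_{2k}$, hence $w_1w_{4k-1}[M]=\langle Sq^1Sq^{2k-1}v_{2k},[M]\rangle = 0$ because $Sq^1Sq^{2k-1}=0$ when $2k-1$ is odd. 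This holds for every closed $4k$-manifold, so it proves the theorem without any generator-by-generator check; that would be a shorter and more conceptual argument than the paper's.
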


This statement involves no number of double points and strictly speaking no topological properties of the manifold $M$, it relates only the twistings 
$\tau^1$ and $\tau^2$ to each other.

%\begin{ques}
%By \cite{Lev1}, for an $f \co M^{4k} \to \R^2$
% stable map of a closed oriented $4k$-dimensional manifold the congruences $\si( M^{4k} ) \equiv \chi( M^{4k} ) \equiv c(f)  \mod 2$
%hold,
%and by Proposition~\ref{sign} the parity of the integer  $(\si( M^{4k} ) - c(f))/2$ measures
%how complicated is the restriction of 
%$f$ to a small tubular neighbourhood of its singular set.
%What does the integer ${}rac{{\si( M^{4k} ) } - c(f) - 2(t(f) + \del([f]))}{4}$ measure for 
% a stable map $f \co M^{4k} \to \R^2$ of a closed oriented $4k$-dimensional manifold?
%\end{ques}

\section{Proof of the results}\label{completebiz}

\subsection{Proof for the cobordism invariants}\label{proof1}

At first, we prove Theorem~\ref{invarithm}.

\begin{proof}[Proof of Theorem~\ref{invarithm}]
Let $f \co Q^{n+q} \to \R^n$ be a framed fold map. We show that if 
$$\Im_{n,q}([f]) = \left(\si_{n,q}([f]) , \csi_{{1}}([f]), \ldots, \csi_{\lfloor (q+1)/2 \rfloor}([f])\right)$$
is zero, then $[f] \in \CC ob_{fr}(n,q)$ is also zero.
Take the map 
$$f \x {\mathrm {id}}_{[0,3\ep)} \co Q \x [0,3\ep) \to \R^n \x [0,3\ep)$$
for some small $\ep >0$.

Recall that 
$$\va \co \R^{q+1} \to \R,$$
$$\va(x_1,\ldots,x_{q+1})  = \sum_{i = 1}^{\la}
-x_i^2  + \sum_{i = \la+1}^{q+1} x_{i}^2$$
is the fold singularity of index $\la$.
By assumption
 \[\csi_{\la}([f]) \in \imm\left({\ep^1_{B(O(\la) \x O(q+1-\la))}},n\right)\] 
is zero for $1 \leq \la \leq (q+1)/2$ so
we can glue these given null-cobordisms of the  
$\va$-bundles of $f$ to the map
$$f \x {\mathrm {id}}_{[0,3\ep)}$$
``at''  $Q \x \{ 3\ep \}$.
Such a null-cobordism is a $\va$-bundle over an $n$-dimensional manifold $\Si_{\la}$ whose 
boundary $\del \Si_{\la}$ is the fold singular set $S_{\la}(f)$. 
In other words, there is an $\R^{q+1}$ bundle over $\Si_{\la}$ with the total space $\mathcal O_{\la}$,  and an $\R$ bundle over $\Si_{\la}$
with the total space $\mathcal R_{\la}$
 and there is a map $\Phi_{\la} \co \mathcal O_{\la} \to \mathcal R_{\la}$
which maps fiberwise as $\va$.
Also $\Si_{\la}$ is immersed into $\R^n \x [0,1)$  so that 
this immersion restricted to the boundary $S_{\la}(f)$ is the immersion $f|_{S_{\la}(f)}$ into $\R^n \x \{ 0 \}$.
So we have a commutative diagram
\begin{equation*}
\begin{CD}
D\mathcal N (S_{\la}(f)) @> \subset >> D\mathcal O_{\la} @> \Phi_{\la}|_{D\mathcal O_{\la}} >>  D\mathcal R_{\la}  \\
@V \subset VV @V \subset VV @V \subset VV  \\
\mathcal N (S_{\la}(f)) @> \subset > i > \mathcal O_{\la} @> \Phi_{\la} >>  \mathcal R_{\la}   \\
@VVV @VVV @VVV \\
S_{\la}(f) @> \subset > j > \Si_{\la} @> = >> \Si_{\la} @> g >> \R^n \x [0,1)
\end{CD}
\end{equation*}
where a lot of arrows are inclusions as it is denoted,
 $g$ is the immersion of $\Si_{\la}$ into $\R^n \x [0,1)$ and $\mathcal N (S_{\la}(f))$ is an  open tubular neighborhood of $S_{\la}(f)$.
The prefix ``$D$'' denotes the corresponding closed disk bundles.
The composition $\Phi_{\la} \circ i$ is the given $\va$-bundle of $f$.
The composition $g \circ j$ is the immersion $f|_{S_{\la}(f)}$.

Moreover the line bundle $\mathcal R_{\la}$ over $\Si_{\la}$, which is a trivial line bundle since 
we are working with framed fold maps,  is the normal bundle of the immersion $g \co \Si_{\la} \to \R^n \x [0,1)$.

So attaching the null-cobordisms to $f \x {\mathrm {id}}_{[0,3\ep)}$ we obtain
a framed fold map $$\tilde F \co V \to \R^n \x [0,1),$$ where $V = Q \x [0,3\ep) \cup_{1 \leq \la \leq (q+1)/2} \mathcal O_{\la}$ is a non-compact 
$(n+q+1)$-dimensional manifold with boundary $Q$. 

We also take the disk bundle $D\mathcal O_{\la}$ of $\mathcal O_{\la}$ and restrict the map $\Phi_{\la}$ to it.
In this way,
we obtain the restricted map $F \co W \to \R^n \x [0,1)$,
where 
\begin{enumerate}
\item
$W \subset V$ and
$W$ is a compact $(n+q+1)$-dimensional manifold 
obtained from attaching to $Q \x [0,2\ep]$ 
the spaces $$D\mathcal N\left(S_{\la}(f)\right) \x [2\ep, 3\ep]$$ and then
the (restricted) domains 
$D\mathcal O_{\la}$
of the null-cobordisms $\Phi_{\la}$ of the index $\geq 1$ fold singularity bundles of $f$,
\item
the boundary of $W$ is equal to 
$Q^{} \amalg P^{}$, where the
closed $(n+q)$-dimensional manifold $P^{}$ 
is diffeomorphic to the union of 
\begin{itemize}
\item
$Q - \bigcup_{1 \leq \la \leq (q+1)/2} D\mathcal N\left(S_{\la}(f)\right)$ and
\item
$\bigcup_{1 \leq \la \leq (q+1)/2} S\mathcal O_{\la}$,
where the prefix ``$S$'' denotes the corresponding sphere bundles,
\end{itemize}
\item
$F|_{Q^{}\x [0,\ep)} = f \x
{\mathrm {id}}_{[0,\ep)}$, 
where 
$Q^{} \x [0,\ep)$
 is a small collar neighborhood of $Q^{}$ ($\subset \del W^{}$) in $W^{}$ with the
identification $Q^{} = Q^{} \x \{0\}$ and
\item
$F$ is a restriction of $\tilde F$ and $\tilde F$ is a fold map with only definite fold singularities into $\R^n \x (0,1)$
near $P^{}$ and 
\item
the definite fold singularities of $F$ are of the form
$S_0(f) \x [0, 2\ep]$ in $Q \x [0,2\ep]$ and mapped by $$\left( f \x {\mathrm {id}}_{[0,2\ep]} \right) |_{S_0(f) \x [0, 2\ep]},$$
\end{enumerate}
see Figure~\ref{foldmap}.

\begin{figure}[ht]
\begin{center}
\epsfig{file=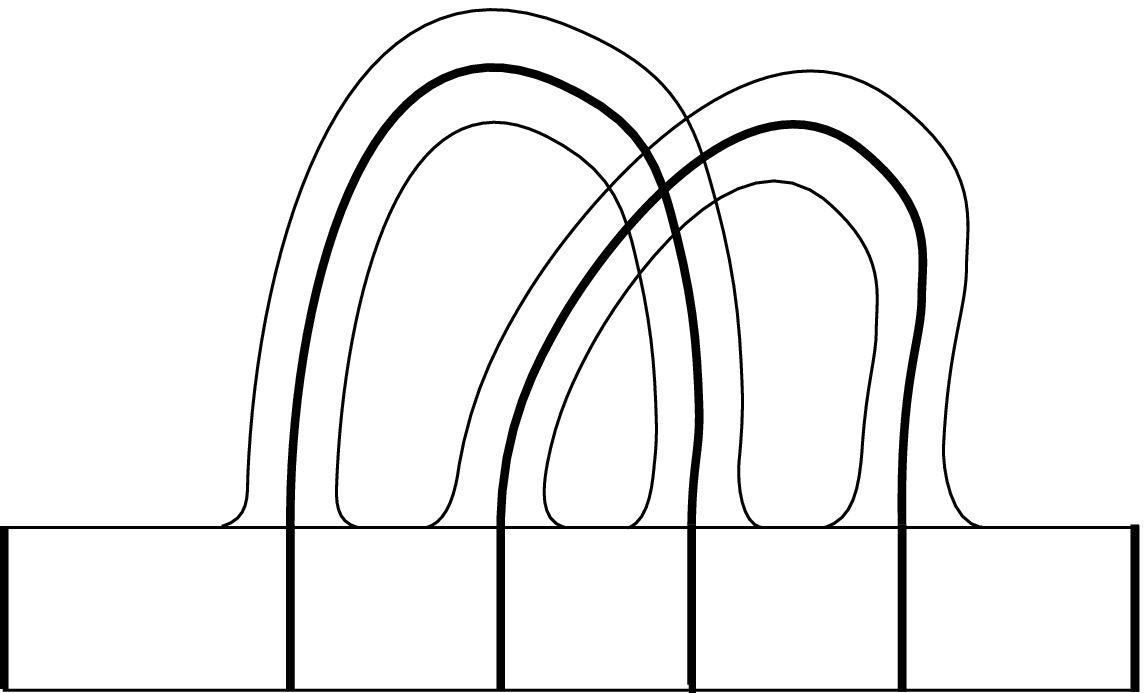, height=5cm}
\put(0.2, 2.5){$\subset \R^n \x [0,1)$}
\put(0.2, -0.1){$\subset \R^n \x \{ 0 \}$}
\put(-9.3, 0){$f(Q)$}
\put(-10.7, 0.5){$F(Q \x [0, 2\ep])$}
\put(-7.5, 1.5){$F(P)$}
\put(-8, 3){$F(W)$}
\end{center}   
\caption{The $F$-image of $W$ in $\R^n \x [0,1)$, where $F|_{Q \x \{ 0 \}} = f$ maps into $\R^n \x \{ 0 \}$. 
The thick segments and arcs represent 
the $F$-image of the fold singular set.}
\label{foldmap}
\end{figure}

Since $\tilde F|_{V}$ is a framed fold map, fixing a Riemannian metric $\varrho$ on $W$ we obtain a
$$\va(F, \varrho, r) \co TW \oplus \ep^1_{W} \to T\left( \R^n \x [0,1) \right)$$
fiberwise epimorphism for some small $r>0$ as in Section~\ref{existfrfold}.
This gives a stable $n$-framing on $W$.
We want this stably $n$-framed $W$ to be a cobordism between $Q$ and $P$ in the sense of Definition~\ref{stabframecob} but
while the framing of $W$ restricts to the framing of $Q$ as Definition~\ref{stabframecob} requires,
we do not get immediately a stable $(n-1)$-framing of $P$. This is because the framing of $W$ gives only 
$n+1$ linearly independent sections of
$$TP \oplus \ep^2_P$$ such that we do not know whether one of these sections is a normal section of $P$.
This problem occurs because $P$ is not mapped by $F$ into some $\R^n \x \{ t \}$, where $t \in \R$,  while $Q$ is.
But since we have $\pi_k\left(V_{n+1}\left(\R^{n+q+2}\right), V_n\left(\R^{n+q+1}\right)\right) = 0$
for the  relative homotopy groups of Stiefel varietes for $k \leq n+q$ (this follows from \cite[Chapter 8.11.]{Hu93}),
 there is a homotopy of these $n+1$ linearly independent sections of $TP \oplus \ep^2_P$
through linearly independent sections  such that at the end we obtain $n+1$ linearly independent sections of $TP \oplus \ep^2_P$
with the last section being parallel to the last $\ep^1_P$ summand. Deleting this we get
$n$ linearly independent sections of $TP \oplus \ep^1_P$
and $W$ becomes a cobordism between $Q$ and $P$ in the sense of Definition~\ref{stabframecob}.
Suppose this homotopy happens over $P \x [0,1]$. 
We identify our $P$ with $P \x \{ 0 \}$.
Then attach
this homotopy to $W$ along $P$. Hence $Q$ and $P \x \{1\}$ are stably $(n-1)$-framed cobordant by $W$ 
and the attached homotopy $P \x [0,1]$.  

It follows that 
since $\si_{n,q}([f])$ is zero, 
the stably $(n-1)$-framed manifold $P \x \{ 1 \}$ is also zero in the 
 cobordism group 
$\CC_{n+q}^{}(n)$. So by gluing a stably $(n-1)$-framed null-cobordism of
$P \x \{ 1 \}$ to $W \cup \left(P \x [0,1]\right)$ along $P \x \{ 1 \}$,
we obtain a
compact $(n+q+1)$-dimensional manifold $X$ with boundary
$Q$ such that the bundle $TX \oplus \ep^1_{X}$ 
has an $(n+1)$-framing which
coincides with the stable $n$-framing of $W$ over $W$. 

Since $\R^n \x [0,1]$ is contractible,
we can extend the map $F$ to
a continuous map $G \co X \to
\R^n \x [0,1]$.

Observe that if we introduce a Riemannian metric on $X$ extending the given Riemannian metric on $W$, then
the $(n+1)$-framing 
of the bundle $TX \oplus \ep^1_{X}$
gives a fiberwise epimorphism 
$$H \co TX \oplus \ep^1_{X} \to T(\R^n \x [0,1])$$ covering
the continuous map $G$
by mapping 
the $n+1$ frames to the standard bases of $T(\R^n \x [0,1])$
at any points of $X$. 
This $H$ coincides with $\va(F, \varrho, r)$ over $W$ because
we got $H$ by the framing determined by $\va(F, \varrho, r)$.

Hence
by Theorem~\ref{relexifold},
we see that 
there is a framed fold map $G' \co X \to
\R^n \x [0,1]$ 
which coincides with $F$ on 
a closed subset of
$W$ which contains some $Q \x \{t\}$ for some $t \in [0,2\ep]$. 
So the framed fold map $f$ is framed
null-cobordant. 
The oriented case is proved in a similar way.
\end{proof}

\subsection{Proof for  the cobordism group of oriented fold maps into the plane}\label{proof2}

In order to prove Theorem~\ref{explsikbathm} (and later Theorem~\ref{explsikbathmunori} in the unoriented case),
we need the following lemma. 

\begin{lem}\label{groupisomlemma}
Let $A, B, C$ be abelian groups and  
let $\iota_B \co B \to A \oplus B$ be the standard inclusion.
Let $\psi \co A \oplus B \to C$ be a surjective homomorphism and
let $\pi_B \co A \oplus B \to B$ be a homomorphism such that
$\pi_B \circ \iota_B$ is an isomorphism.
Suppose $( \psi , \pi_B) \co A \oplus B \to C \oplus B$ is injective.
%If $\psi \circ \iota$ is identically zero, then $( \psi , \pi)$ is surjective.
%Then
\begin{enumerate}[\rm (1)]
\item
If $\im (\psi \circ \iota_B) = 0$, then $( \psi , \pi_B)$ is also surjective so $A \oplus B$ is isomorphic to $C \oplus B$.
\item
If $\im (\psi \circ \iota_B) = \Z_2$ and $( \psi , \pi_B)$ is not surjective, then
$A \oplus B$ is isomorphic to $C / \im (\psi \circ \iota_B) \oplus B$.
\end{enumerate}
\end{lem}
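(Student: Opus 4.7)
The plan is to analyze $\phi = (\psi, \pi_B) \co A \oplus B \to C \oplus B$ directly and exploit a translation trick coming from the hypothesis that $\pi_B \circ \iota_B$ is an isomorphism. The key remark I will use repeatedly is that for any $(a_0, b_0) \in A \oplus B$ and any prescribed $b \in B$, there is a unique $b_1 \in B$ with $\pi_B\bigl((a_0, b_0) + \iota_B(b_1)\bigr) = b$, namely $b_1 = (\pi_B \circ \iota_B)^{-1}(b - \pi_B(a_0, b_0))$. Combined with the surjectivity of $\psi$, this shows that every $(c, b) \in C \oplus B$ admits an element $(a_0, b_0 + \iota_B(b_1))$ whose $\phi$-image equals $(c + (\psi \circ \iota_B)(b_1), b)$; hence $\im{\phi}$ meets every coset of $\im{(\psi \circ \iota_B)} \oplus 0$ in $C \oplus B$.

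Part (1) is then immediate: the correction $(\psi \circ \iota_B)(b_1)$ is zero, so $\phi$ is surjective, and combined with the assumed injectivity this gives $A \oplus B \cong C \oplus B$.

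For part (2) I would set $K = \im{(\psi \circ \iota_B)} \cong \Z_2$ with nontrivial element $k$, and consider the composition $\bar\phi$ of $\phi$ with the projection $C \oplus B \to (C/K) \oplus B$. Surjectivity of $\bar\phi$ follows from the translation trick, since the correction lies in $K$ and is killed modulo $K$. For injectivity I would suppose $\bar\phi(a, b) = 0$, that is, $\psi(a, b) \in K$ and $\pi_B(a, b) = 0$. The easy case $\psi(a, b) = 0$ forces $(a, b) \in \ker{\phi}$, which is zero by the assumed injectivity of $\phi$; so it remains to rule out the case $\psi(a, b) = k$.

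Excluding this last case is the main (and essentially only) content of the argument, and it drops out of the same translation trick. If there were $(a, b)$ with $\phi(a, b) = (k, 0)$, then for any target $(c, b') \in C \oplus B$ the trick produces an element of $\im{\phi}$ of the form $(c + \epsilon, b')$ for some $\epsilon \in \{0, k\}$; in the bad case $\epsilon = k$ one subtracts $(k, 0) \in \im{\phi}$ to conclude $(c, b') \in \im{\phi}$. Thus $\phi$ would be surjective, contradicting the hypothesis of (2). Therefore $\bar\phi$ is injective, hence an isomorphism, giving the desired $A \oplus B \cong (C/K) \oplus B$.
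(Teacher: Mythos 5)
Your proof is correct and follows essentially the same route as the paper: both arguments show surjectivity of the map to $(C/\im(\psi\circ\iota_B)) \oplus B$ by adjusting the $B$-coordinate via $\iota_B$ and the inverse of $\pi_B\circ\iota_B$, and both rule out the nontrivial kernel element by observing that $(k,0)\in\im(\psi,\pi_B)$ would force $(\psi,\pi_B)$ to be surjective. The only difference is presentational: the paper extracts the injectivity step into a small abstract sub-lemma about $G\to H\to H/\Z_2$, while you carry it out inline.
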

\begin{proof}
Let $q \co C \to C / \im (\psi \circ \iota_B)$ denote the quotient map and let
$id_B \co B \to B$ denote the identity map of $B$.
We study the composition\footnote{If $G, G', H, H'$ are abelian groups and $\al \co G \to G'$ and 
$\be \co H \to H'$ are homomorphisms, then
let $\al \oplus \be \co G \oplus H \to G' \oplus H'$ denote the homomorphism which maps
$(g, h) \in G \oplus H$ to $(\al(g), \be(h))$.}
\begin{equation*}
\begin{CD}
A \oplus B @> ( \psi , \pi_B) >> C \oplus B @> q \oplus id_B >>   C / \im (\psi \circ \iota_B) \oplus B.   \\
\end{CD}
\end{equation*}
We show that it is surjective. 
Take an element $(x, y) \in C / \im (\psi \circ \iota_B) \oplus B$.
There is a $c \in C$ such that $q(c) = x$ and there is an $(a, b) \in A \oplus B$ such that $\psi (a, b ) = c$.
Then $\pi_B (a, b) =  b'$ for some $b' \in B$. 
Choose $b'' \in B$ such that $\pi_B \circ \iota_B (b'') = y - b'$.
This means that $\pi_B( 0, b'') = y - b'$.
Then $\psi (a, b + b'') = \psi(a, b) + \psi(0, b'')$ and $\pi_B (a, b + b'') = \pi_B (a, b) + \pi_B (0, b'')$,
so 
\begin{multline*}
q \oplus id_B  \circ ( \psi , \pi_B) (a, b + b'') = q \oplus id_B \left(   \psi(a, b) + \psi(0, b''), \pi_B (a, b) + \pi_B (0, b'') \right) = \\ 
q \oplus id_B \left(   \psi(a, b) + \psi(0, b''), y \right) =
\left(q\left(\psi(a, b)\right) + q\left(\psi(0, b'')\right), y\right) = (x, y)
\end{multline*}
because $q\left(\psi(0, b'')\right) = 0$. So $q \oplus id_B  \circ ( \psi , \pi_B)$ is surjective.
This immediately implies (1) since then $q \oplus id_B$ is the identity map.

To see that  (2) also holds note that if we have two groups $G$ and $H$, $H$ has a subgroup $\Z_2 \subset H$
and we have the composition
$$G \to H \to H / \Z_2,$$
where the first arrow, which we denote by $\al$, is injective but not surjective, the second arrow is the quotient map and the composition is surjective, then $G$ is isomorphic to $H / \Z_2$.
This is because $\al$ has to hit all the classes $h+\Z_2$  and if there is a $g \in G$ which goes to 
$1 \in \Z_2 \subset  H$, then for any $h \in H$ such that $h \in \im \al$ we have that  $h+1 \in \im \al$. So $\al$ would be surjective contradicting  our original assumption.
Hence $1 \in \Z_2$ is not in $\im \al$ so the composition is injective.
 \end{proof}

Now we prove Theorem~\ref{explsikbathm}.
\begin{proof}[Proof of Theorem~\ref{explsikbathm}]
Recall that by Theorem~\ref{invarithm} for $k \geq 1$
the homomorphism $\Im_{2,4k-2}^O$ is injective.
We will apply Lemma~\ref{groupisomlemma}.
By Proposition~\ref{directsumincobgroup} in 
Section~\ref{mobuimm} (see also \cite[Theorem~3.1 and Remark~3.3]{Kal7}) the group $\CC ob_{{}}^{O}(2, 4k-2)$
contains the group \[\bigoplus_{1 \leq j \leq 2k-1} \imm\left({\ep^1_{BS(O(j) \x O(4k-1-j))}},2\right)\]
as a direct summand, we denote this group by  $B$, so $\CC ob_{{}}^{O}(2, 4k-2) \cong A \oplus B$
for some group $A$. With these roles we will apply Lemma~\ref{groupisomlemma}.
The homomorphism $(\csi_{{1}}^O, \ldots, \csi_{{2k-1}}^O)$
will be $\pi_B$.

By Proposition~\ref{koschizom} the group ${\CC}_{4k}^O{(2)}$, which will be the group $C$, is isomorphic to the group
$\Omega_{4k}^{2|\chi}$ and the forgetful map 
${\CC}_{4k}^O{(2)} \to \Omega_{4k}^{2|\chi}$ which
forgets the framings gives an isomorphism.
Since by \cite{Lev1} every closed orientable manifold of dimension $> 2$ with even Euler characteristic has  a fold map into the plane,
the homomorphism $\si^O_{2,4k-2} \co \CC ob_{{}}^{O}(2, 4k-2)
 \to {\CC}_{4k}^O{(2)}$, which will be $\psi \co A \oplus B \to C$ when we apply Lemma~\ref{groupisomlemma}, is  surjective.
 
Now, let us apply  Lemma~\ref{groupisomlemma}. By case (1) we have, but also it is quite obvious, that  
to finish the proof of Theorem~\ref{explsikbathm}
 it is enough to show that 
% the contribution of the homomorphism
%$\bigoplus_{1 \leq j < (q+1)/2} \csi_{{j}}^O$ in the image ${\CC}_{2+q}^O{(2)} \cong \Omega_{2+q}^{2|\chi}$ of $\si^O_{2,q}$
%is zero.
%In other words, it is enough to show that
the source manifolds  of the fold maps
$\va_{j,4k-2, i_j} \co Q_{j,4k-2, i_j}^{4k} \to \R^2$ and
$\va_{j,4k-2, e_j} \co Q_{j,4k-2, e_j}^{4k} \to \R^2$, which are representatives of generators of the group $B$
(see Section~\ref{mobuimm}) represent zero in $\Omega_{4k}^{2|\chi}$. But the manifolds $Q_{j,4k-2,i_j}^{4k}$ and $Q_{j,4k-2,e_j}^{4k}$
are fibrations over the circle $S^1$ with fiber 
 the $(4k-1)$-dimensional sphere  with orientation preserving linear structure group, hence they are null-cobordant.
\end{proof}

\subsection{Proof of the Poincar\'e-Hopf type formula}\label{proof3}

Now, we prove Theorem~\ref{sign}.

\begin{proof}[Proof of Theorem~\ref{sign}]
%First, we prove the proposition for fold maps.
By Theorem~\ref{explsikbathm} the group $\CC ob_{{}}^{O}(2, 4k-2)$ is isomorphic to
$$\Omega_{4k}^{2|\chi} \oplus \Z_2^{4k-2}.$$
We check the values 
$$[f_{\ga}|_{S_f}] + \sum_{j=1}^{2k-1} \tau_j(f_{\ga})$$ 
and $$\frac{\si( M_{\ga}^{4k} )}{2} \mod{2}$$
for a system of generators $\{ [f_{\ga} \co M_{\ga}^{4k} \to \R^2] \}_{\ga \in \Ga}$ 
of the cobordism group $\CC ob_{{}}^{O}(2,4k-2)$. 
%By Proposition~\ref{explsikbathm} the group $\CC ob_{{}}^{O}(2, 4k-2)$ is isomorphic to
%$\Omega_{4k}^{2|\chi} \oplus \Z_2^{4k-2}$. 
A system of representatives of the generators of the 
$\Z_2^{4k-2}$
summand 
of $\CC ob_{{}}^{O}(2,4k-2)$
is given by the fold maps $\va_{j,4k-2, i_j}$ and
$\va_{j,4k-2, e_j}$, where $[i_j]$ and $[e_j]$ are the generators
of the group $$\imm\left({\ep^1_{BS(O(j) \x O(4k-1-j))}},2\right) \cong \Z_2^2,$$ 
$1\leq j \leq 2k-1$,
see Section~\ref{mobuimm}. Note that the source manifolds of 
$\va_{j,4k-2, i_j}$ and
$\va_{j,4k-2, e_j}$ are oriented null-cobordant  for $1\leq j \leq 2k-1$.

To generate the direct summand  $\Omega_{4k}^{2|\chi}$ of $\CC ob_{{}}^{O}(2,4k-2)$ as well, we 
construct a fold map into the plane 
of each element of a system of representatives
of generators
 of the group $\Omega_{4k}^{2|\chi}$ as follows.
Let us take a class $\omega \in \Omega_{4k}^{2|\chi}$. It can be written in the form 
$$\omega = r[\CP^{2k} \# \CP^{2k}] + [Z^{4k}],$$ where $r \geq 0$, the manifold $\CP^{2k} \# \CP^{2k}$ is oriented in {\it some} way and the signature $\si( Z^{4k} )$ is equal to zero.
By \cite[Theorem~3]{AK}, we can suppose that $Z^{4k}$ is a fiber bundle over $S^2$ with a closed orientable 
$(4k-2)$-dimensional manifold $F^{4k-2}$ as fiber.

\begin{prop}\label{fiberedmaprelOK}
There is an oriented fold map $f_z \co Z \to \R^2$ such that
$$t(f_z)  \equiv 0 \mod{2}$$ and
$$\tau(f_z) \equiv 0 \mod{2}.$$
\end{prop}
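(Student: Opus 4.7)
The plan is to construct $f_z$ explicitly via a fiberwise Morse function in polar coordinates centered on a round circle in $\R^2$; the resulting fold singular set will be a disjoint union of embedded circles, mapped to pairwise disjoint concentric circles in $\R^2$, with trivial normal bundles in $Z$, from which both $t(f_z) \equiv 0$ and $\tau(f_z) \equiv 0 \pmod 2$ follow by inspection of the definitions in Section~\ref{geomcobinv}.

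First, I fix the presentation $p \co Z \to S^2$ given by \cite[Theorem~3]{AK} and decompose $S^2 = D^2_+ \cup_E D^2_-$ as the union of two closed hemispheres along the equator $E \cong S^1$, so that the bundle trivializes as $Z|_{D^2_\pm} \cong D^2_\pm \x F$ over each hemisphere with clutching $\phi \co E \to \mathrm{Diff}^+(F)$. I pick a Morse function $h \co F \to \R$ with critical points $p_1, \dots, p_\ell$ of distinct critical values $c_1 < \dots < c_\ell$ and of intermediate indices $1 \leq \la_i \leq 4k-3$ (trading critical points if needed), fix an embedding $\iota \co E \hookrightarrow \R^2$ as the unit circle, and, after an isotopy, arrange that $\phi$ preserves $h$, so in particular it fixes each $p_i$ pointwise and acts trivially on each tangent space $T_{p_i} F$.

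Second, I define $f_z$ as follows. On a tubular neighborhood $V = p^{-1}(E \x (-\de, \de)) \cong E \x (-\de, \de) \x F$ of the equatorial bundle set
\[
f_z(e, t, y) \;=\; \bigl(1 + \ep (h(y) + t^2)\bigr)\, \iota(e)
\]
for a small $\ep > 0$; by the $\phi$-invariance of $h$ this is globally well-defined on $V$. On each hemisphere $D^2_\pm \x F$ extend $f_z$ by a submersion into the appropriate interior disk or exterior annulus of $\R^2$ bounded by the innermost (respectively outermost) singular circle. A direct local computation in Morse coordinates around each $(e_0, 0, p_i)$ shows that $f_z$ has the standard fold germ of index $\la_i$ (possibly shifted by one, depending on the sign of the $t^2$-term), that the singular set of $f_z$ equals $\bigsqcup_i C_i$ with $C_i = E \x \{0\} \x \{p_i\}$, and that each $C_i$ maps by $f_z$ injectively onto the round circle $\{r = 1 + \ep c_i\} \subset \R^2$.

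Third, I read off the invariants. The immersion $f_z|_{S_{f_z}}$ embeds $\bigsqcup_i C_i$ as a disjoint union of pairwise disjoint concentric round circles in $\R^2$, so there are no double points and $t_\la(f_z) = 0$ for every $\la$, giving $t(f_z) \equiv 0 \pmod 2$. The normal bundle of $C_i$ in $Z$ decomposes as the trivial line bundle in the $t$-direction plus the Hessian eigenspace decomposition $V^-_{p_i} \oplus V^+_{p_i} \subset T_{p_i} F$ pulled back along $C_i \to \{p_i\}$; since $\phi$ acts trivially on $T_{p_i} F$, both pieces are trivial bundles over $C_i$ and the full normal $O(\la_i) \x O(4k-1-\la_i)$-structure is trivial. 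Hence $\tau_\la(f_z) = 0$ for every $\la$ and $\tau(f_z) \equiv 0 \pmod 2$. The main obstacle is the equivariance step of arranging $\phi$ to preserve $h$, which is a strong condition not automatic for a general bundle $Z \to S^2$; one way forward is to first fix $\phi$ within its homotopy class and then build an adapted $\phi$-invariant Morse function $h$ by equivariant Morse theory on $F$, while an alternative is to keep $h$ fixed but use slightly different trivializations on the two hemispheres and interpolate across the equator through a transition region housing additional compensating fold singularities whose contribution to $t + \tau$ vanishes modulo $2$. Either route requires checking that the global construction preserves the embedding and triviality properties of the singular circles, which is the technical heart of the argument.
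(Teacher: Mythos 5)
There is a genuine gap, and it is exactly the one you flag yourself as ``the main obstacle'': your formula $f_z(e,t,y) = (1 + \ep(h(y) + t^2))\,\iota(e)$ on a tubular neighborhood of the equatorial bundle only descends to a well-defined map on $V = p^{-1}(E \times (-\de,\de))$ when the clutching diffeomorphism $\phi$ preserves $h$, and you do not establish this. Your two proposed remedies are not carried out, and the first one (``equivariant Morse theory'') is unavailable as stated because $\phi(E) \subset \mathrm{Diff}^+(F)$ need not generate a compact group; for a general bundle $Z \to S^2$ there is no reason a $\phi$-invariant Morse function on $F$ should exist. So as written the proof stops at the very point where the work has to happen.

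The paper avoids the equivariance issue entirely by a different placement of the Morse data. It takes a Morse function $\mu \co F^{4k-2} \times [0,1] \to [0,1]$ with no critical points near $F \times \{0\} \amalg F \times \{1\}$ and with $\mu = 1$ on that boundary, and defines the fold map on $F \times [0,1] \times S^1$ by $(u,s,e^{i\theta}) \mapsto (2 - \mu(u,s))e^{i\theta}$, sending it into the annulus $1 \le |x| \le 2$. Then, identifying $D^2_+$ with $\{|x|\le 1\}$ and $D^2_-$ with $\{|x|\le 2\}$, the total map $f_z$ is the bundle projection on $Z_+ = p_+^{-1}(D^2_+)$ and on $p_-^{-1}(\{|x|\le 1\})$, and is the Morse-function map above on $p_-^{-1}(\{1\le |x|\le 2\}) = F \times \{1\le |x|\le 2\}$. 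The crucial point is that all fold singularities live strictly inside the trivialized piece $Z_-$, and near the gluing circle $\del Z_+ = \del Z_-$ the map is a singularity-free bundle projection on both sides, so the clutching function never interacts with the Morse function. This both makes $f_z$ a genuine fold map and makes the singular image a collection of concentric circles with trivial determinant bundles, giving $t(f_z) \equiv \tau(f_z) \equiv 0 \pmod 2$ exactly as you intended. If you want to salvage your version, the cleanest fix is to abandon the tubular neighborhood of the equator as the seat of the singularities and instead push the entire fiberwise Morse function into the trivialized interior of one hemisphere, as the paper does.
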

\begin{proof}
Clearly there exists a Morse function $\mu \co F^{4k-2} \x [0,1] \to [0,1]$ such that 
$\mu$ has no singularities near $F^{4k-2} \x \{0\} \amalg F^{4k-2} \x \{1\}$,
$\mu$ takes its maximum $1$ on $F^{4k-2} \x \{0\} \amalg F^{4k-2} \x \{1\}$,
 and
$\mu^{-1}(1-t) = F^{4k-2} \x \{t\} \amalg F^{4k-2} \x \{1-t\}$ for $0 \leq t \leq 1/4$.

Let $\tilde f_z$ be the fold map of $F^{4k-2} \x [0,1] \x S^1$ into $\R^2$ defined by
$$\tilde f_z(u, s, e^{i\theta}) = (2-\mu(u,s))e^{i\theta}$$  for $u \in F^{4k-2}, 
s \in [0,1], \theta \in [0,2\pi]$, where we identify $S^1$ with the unit circle $\{ e^{i\theta} :  \theta \in [0,2\pi] \}$.
So we have the fold map
$$\tilde f_z \co F^{4k-2} \x [0,1] \x S^1 \to \{ 1 \leq |x| \leq 2 \} \subset \R^2,$$
which maps the 
boundary $\left( F^{4k-2} \x \{0\} \amalg F^{4k-2} \x \{1\} \right) \x S^1$ into the unit circle $\{|x|=1\}$.

Let $D^2_+$ and $D^2_-$ denote the northern and southern hemispheres of $S^2$, respectively.
According to the decomposition $$D^2_+ \bigcup_{} D^2_- = S^2,$$ %where $\al \co \del D^2_+  \to \del D^2_-$ is the standard gluing diffeomorphism,
the bundle $F^{4k-2} \hookrightarrow Z^{4k} \to S^2$ falls into two pieces, i.e.\ into the trivial $F^{4k-2}$ bundles
$p_+ \co Z^{4k}_+ \to D^2_+$ and $p_- \co Z^{4k}_- \to D^2_-$.
Let us identify $D^2_{+}$ with the standard unit disk $\{|x| \leq 1\}$
and $D^2_-$ with the disk   $\{|x|\leq 2\}$ of radius $2$ in $\R^2$.

We define the fold map $$f_z \co Z^{4k} \to \R^2$$ by 
\begin{itemize}
\item
$f_z|_{Z^{4k}_+} = p_+$, 
\item
$f_z|_{p_-^{-1}(\{ |x| \leq 1\})} = p_-$ and
\item
$f_z|_{p_-^{-1}( \{ 1 \leq |x| \leq 2 \})} = \tilde f_z$, where
$p_-^{-1}( \{ 1 \leq |x| \leq 2 \}) = F^{4k-2} \x \{ 1 \leq |x| \leq 2 \}$ and
the annulus $\{ 1 \leq |x| \leq 2 \}$ is identified with $[0,1] \x S^1$ 
\end{itemize}
so that
the resulting map $f_z$ is a fold map.

\begin{lem}
We have
$[f_{z}|_{S_{f_z}}] \equiv 0 \mod{2}$ and
 $\sum_{j=1}^{2k-1} \tau_j (f_z)  \equiv 0 \mod{2}$.
\end{lem}
\begin{proof}
It is easy to see that the $f_z$-image of the fold singular set of $f_z$ consists of
concentric circles, moreover each determinant bundle $\delta_j(f_z)$, see Remark~\ref{detnyaltwist}, is  trivial
if the set $S_j(f_z)$ is non-empty, for $1\leq j \leq 2k-1$.
\end{proof}

This completes the proof of Proposition~\ref{fiberedmaprelOK}.
\end{proof}

\begin{prop}\label{prop2CP^2-nfoldmapproof-al}
Let $k \geq 1$.
There is an oriented fold map $f \co \CP^{2k} \# \CP^{2k} \to \R^2$
such that $$t(f)  \equiv 0 \mod{2}$$ and
$$\tau(f) \equiv 1 \mod{2}.$$
\end{prop}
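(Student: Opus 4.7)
My plan is to construct $f$ explicitly by combining two pieces built on $\CP^{2k}\setminus\mathrm{int}(D^{4k})$, exploiting the doubling realization $\CP^{2k}\#\CP^{2k} \cong (\CP^{2k}\setminus\mathrm{int}(D^{4k})) \cup_{\phi} (\CP^{2k}\setminus\mathrm{int}(D^{4k}))$, where $\phi\co S^{4k-1}\to S^{4k-1}$ is the gluing diffeomorphism. The freedom of choosing $\phi$ up to isotopy (coupled with the freedom of choosing the orientations of the two summands) is what will let me control the twisting $\tau$ modulo $2$.

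First I would fix the standard circle-action Morse function $h\co \CP^{2k}\to [-1,1]$, with $2k+1$ critical points of indices $0,2,\ldots,4k$, and use it to produce a model fold map $g_0\co \CP^{2k}\setminus\mathrm{int}(D^{4k})\to D^2$ of the following form: thinking of $D^2$ in polar coordinates $(r,\theta)$, let $g_0$ be a Morse function bundle in the sense of Section~\ref{mobuimm} over a radial segment $\{0\le r\le 1,\ \theta = 0\}$, extended rotationally over $D^2$ using the $S^1$-symmetries of the intermediate index Morse singularities given by Lemma~\ref{kiterj}. Concretely I would arrange $g_0$ so that on the boundary $S^{4k-1}=\partial D^{4k}$ it is the Hopf fibration composed with inclusion $S^2\hookrightarrow D^2$; and so that its singular set consists of $2k-1$ concentric round circles in $D^2$, each corresponding to a single intermediate Morse index $j\in\{1,\ldots,2k-1\}$, immersed without double points and with trivial determinant bundle over each circle.

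Second, I would define $f$ on $\CP^{2k}\#\CP^{2k}$ by gluing two such copies of $g_0$, one over $D^2_+$ and one over $D^2_-$ (with $D^2_+\cup D^2_- = S^2$, which we then project to $\R^2$), choosing the gluing diffeomorphism $\phi$ so that, while both copies remain fold maps with compatible boundaries, \emph{exactly one} of the index-$j$ singular circles (say $j=2k-1$) acquires a non-trivial determinant bundle. This is the key step: rotating $\phi$ by a loop in $S(O(j)\times O(q+1-j))$ whose class is the generator $[e_j]$ of Section~\ref{mobuimm} has the effect of twisting the normal bundle of precisely that one circle, producing $\tau_{2k-1}(f)=1$ and leaving $\tau_j(f)=0$ for the other indices. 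Because all singular circles remain embedded in $\R^2$, we simultaneously keep $t(f)\equiv 0\bmod 2$. Finally I would check that the smooth $4k$-manifold that arises from this gluing is diffeomorphic to $\CP^{2k}\#\CP^{2k}$ (rather than to $\CP^{2k}\#\overline{\CP^{2k}}$ or some other plumbing), by tracking orientations of the index-$0$ fibers across the equator $\partial D^2_\pm$ and comparing with the standard handle decomposition of $\CP^{2k}\#\CP^{2k}$.

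The main obstacle will be the last verification: confirming that the twist $[e_j]$ built into $\phi$ produces the correct diffeomorphism type of the source. One needs to show that the loop chosen in $S(O(j)\times O(q+1-j))$ extends, after passing through the intermediate and definite Morse bundles, to the correct element of $\pi_0(\mathrm{Diff}^+(S^{4k-1}))$ for the connected sum to be $\CP^{2k}\#\CP^{2k}$ rather than some homotopy sphere connected summand. I expect this to follow from the fact that $S(O(j)\times O(q+1-j))$ maps into the identity component of $\mathrm{Diff}^+(S^{4k-1})$ when extended via the Morse bundle, so the diffeomorphism type is unaffected by the twist, and the construction yields genuinely $\CP^{2k}\#\CP^{2k}$ as required.
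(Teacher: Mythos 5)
Your construction has a gap that is fatal, not merely technical, and it is worth isolating it because the fix is exactly the idea the paper's proof supplies.

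The central problem is in the second step. You arrange $g_0 \co \CP^{2k}\setminus\mathrm{int}(D^{4k}) \to D^2$ so that its restriction to $\partial D^{4k}=S^{4k-1}$ is a \emph{submersion} (you call it ``the Hopf fibration composed with inclusion $S^2\hookrightarrow D^2$''), and so that its fold singular circles map to concentric circles strictly in the interior of $D^2$. But then the entire singular set of $g_0$ is disjoint from the boundary sphere along which you glue. The determinant bundle $\delta_j(f)$ over a fold singular circle $S_j(f)$, and hence each $\tau_j(f)$, is a \emph{local} invariant read off from the fold germ bundle in a tubular neighborhood of $S_j(f)$ in the source. If every singular circle of $f$ lies in the interior of one copy, precomposing or postcomposing the gluing diffeomorphism $\phi$ by any loop in $S(O(j)\times O(q+1-j))$ (or anything else supported near $\partial D^{4k}$) changes nothing in those tubular neighborhoods. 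Since you stipulated trivial determinant bundles for $g_0$, you would get $\tau(f)=0$, not $1$, no matter how you choose $\phi$. In short, twisting the gluing can produce nontrivial $\tau$ only if the fold singular set actually passes through the gluing sphere. There are also preliminary issues: for $k>1$ there is no Hopf fibration $S^{4k-1}\to S^2$, and $\CP^{2k}$ is not an $S^{q+1}$-bundle over anything, so the ``Morse function bundle over a radial segment'' structure from Section~\ref{mobuimm} cannot be imposed on $\CP^{2k}\setminus\mathrm{int}(D^{4k})$ without further argument (there is even an apparent dimension mismatch, since a bundle of Morse functions $\CP^{2k}\to\R$ over a 1-manifold has total space of dimension $4k+1$).

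The paper's proof makes exactly the opposite choice: the sphere $S^{4k-1}$ along which the two punctured copies of $\CP^{2k}$ are glued is chosen so that the fold singular set \emph{does} cross it. Starting from Levine's stable map $g\co\CP^{2k}\to\R^2$ with a single cusp $p$ (whose adjacent indefinite fold has index $2k-1$), one removes a region $R$ containing $g(p)$ and bounded by a circle $C$ that meets the definite fold image transversely in one point; the preimage $g^{-1}(R)$ is a $4k$-ball whose boundary sphere carries the Morse function $h=g|_{g^{-1}(C\cap g(\CP^{2k}))}$ with exactly four critical points of indices $0,\ 2k-1,\ 2k,\ 4k-1$. Two copies are glued by an automorphism $(\varphi,\psi)$ of $h$, both orientation-reversing, where $\varphi$ swaps the two middle critical points and reverses the orientations of the unstable manifolds. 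Precisely because the fold arc runs through the ``connecting tube,'' this orientation reversal twists the indefinite fold germ bundle over the single component of $S_{f_C}$ that passes through the tube, giving $\tau(f_C)\equiv 1$, while the components entirely inside one summand come in identical pairs and contribute $0$; meanwhile $t(f_C)\equiv 0$ because no double points were created. That mechanism --- a fold arc that genuinely traverses the gluing sphere, together with an automorphism of a small Morse function on $S^{4k-1}$ with a \emph{cancelling} pair at indices $2k-1,2k$ --- is the missing idea in your plan.
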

\begin{proof}
Let us orient $\CP^{2k}$ in the standard way.
We define the fold map $f_{C} \co \CP^{2k} \# \CP^{2k} \to \R^2$ as follows.
By \cite{Lev1} there is a stable map $$g \co \CP^{2k} \to \R^2$$ with only one cusp point 
$p \in \CP^{2k}$ and also we know that the index of the fold singularities around this cusp is $2k-1$.
Our plan is to ``eliminate'' these two cusps in the ``two copies'' of $\CP^{2k}$ in the connected sum 
$\CP^{2k} \# \CP^{2k}$.

Since the singular set of $g$ is connected, we can suppose that there is an embedded arc $a \co [0,1] \to \R^2$
% transversal on $(0,1)$ to the image $g(S_g)$ of the singular set $S_g$ 
such that $a(0)= g(p)$, $a(1) \in \R^2 - g(\CP^{2k})$ and
$a\left( (0,1) \right)$ intersects the image of the singular set of $g$ transversally and exactly at one definite fold value.
%Let $d \in (0,1)$ be that point for which $a(d)$ is equal to that definite fold value.

Take a small tubular neighborhood of $a([0,1])$ in $\R^2$. Then the boundary $C$ of this neighborhood 
is a circle embedded into $\R^2$ which
divides $g(\CP^{2k})$
into two regions. (Of course $C \cap g(\CP^{2k})$ is an embedded interval in $\R^2$.)
One of them contains $g(p)$ and the other does not. Denote the region containing $g(p)$ by $R$, see Figure~\ref{complexproj}.

\begin{figure}[ht]
\begin{center}
\epsfig{file=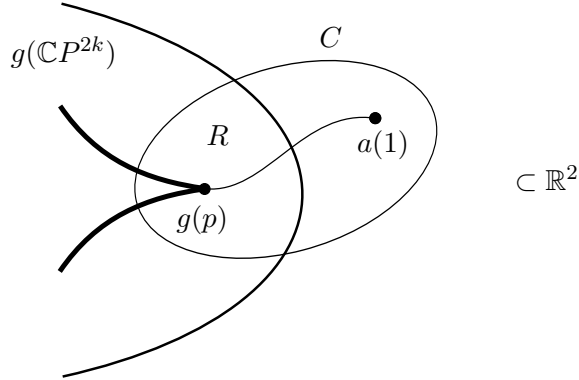, height=5cm}
\put(1, 2.5){$\subset \R^2$}
\put(-5.7, 4.2){$g(\CP^{2k})$}
\put(-3.5, 2){$g(p)$}
\put(-1.1, 3){$a(1)$}
\put(-1.6, 4.4){$C$}
\put(-3.1, 3.1){$R$}
\end{center}   
\caption{The $g$-image of $\CP^{2k}$ in $\R^2$. The thick arcs represent the $g$-image of the fold singular set going into 
the cusp value $g(p)$. The arc connecting $g(p) = a(0)$ and $a(1)$ intersects transversally the $g$-image of the definite fold singular set.}
\label{complexproj}
\end{figure}

Then the preimage $g^{-1}(R)$ is an embedded $4k$-dimensional ball in $\CP^{2k}$.
Moreover 
the $g$-preimage of $C \cap g(\CP^{2k})$ is the boundary of this $4k$-dimensional ball, which is an $S^{4k-1}$.
And if we identify $C \cap g(\CP^{2k})$ with $\R$, then the map
$g$ restricted to this $S^{4k-1}$ is a Morse function with four critical points: two definite critical points,
one critical point of index $2k-1$ and one critical point of index $2k$.
This Morse function gives a handle decomposition of $S^{4k-1}$. 
Since the two middle critical points form a cancelling pair (because of the cusp point), the $(2k-1)$-handle 
is attached to the $0$-handle such that the attaching sphere is the standard $(2k-2)$-dimensional 
sphere with trivial framing. 
Watching the Morse function ``upside down'' we have the same thing about the other handles.

So after identifying $g^{-1}(C \cap g(\CP^{2k}))$ with $S^{4k-1}$ and 
$C \cap g(\CP^{2k})$ with $\R$, we get the Morse function $g|_{g^{-1}(C \cap g(\CP^{2k}))}$, which we denote by $h$.
By the previous argument, 
we can suppose that $$S^{4k-1} = \{ |(x_1,\ldots x_{4k})|=1\}$$ 
is given
in the form $$S^{2k-1} \x D^{2k} \bigcup_{}  D^{2k} \x S^{2k-1},$$ where $S^{2k-1} \x D^{2k}$ and $D^{2k} \x S^{2k-1}$ are identified with the subsets
$\{x_{2k+1}^2 + \cdots + x_{4k}^2 \leq 1/2\}$ and $\{ x_{1}^2 + \cdots + x_{2k}^2 \leq 1/2\}$ of $\{ |(x_1,\ldots x_{4k})|=1\}$, respectively.
We can suppose, that this decomposition coincides with the handle decomposition of $S^{4k-1}$ given by the Morse function $h$, $h^{-1}(0)=\{ x_{1}^2 + \cdots + x_{2k}^2 = 
 x_{2k+1}^2 + \cdots + x_{4k}^2 = 1/2\}$, 
 the critical point of index $2k-1$ is
 $(1,0,\ldots,0) \in S^{2k-1} \x \{0\}$,
 and  the critical point of index $2k$ is in $\{0\} \x S^{2k-1}$ with $(2k+1)$-st coordinate equal to $1$ and other coordinates equal to $0$.

Now let us take the map $g \amalg (T \circ g) \co  \CP^{2k} \amalg \CP^{2k} \to \R^2$, where $T \co \R^2 \to \R^2$ 
is an affine translation such that the image of $T \circ g$ is disjoint from the image of $g$.
Since $T \circ g \co \CP^{2k} \to \R^2$
is just a copy of the map $g$, we also get a copy of the Morse function $h$ applying all the previous constructions 
to $T \circ g$ instead of $g$.
Roughly speaking, the Morse function $h$ is 
$$g|_{g^{-1}\left(C \cap g\left(\CP^{2k}\right)\right)}$$ and this other Morse function
is 
$$T \circ g|_{\left(T \circ g\right)^{-1}\left(T(C) \cap T \circ g\left(\CP^{2k}\right)\right)},$$
which can be naturally identified with $h$.

Now, we want to form the connected sum $\CP^{2k} \# \CP^{2k}$ and obtain a map
$$f_C \co \CP^{2k} \# \CP^{2k} \to \R^2,$$
which ``coincides'' with $g|_{g^{-1}(\R^2 - R)}$ on the first $\CP^{2k}$ summand and
with $T \circ g|_{(T \circ g)^{-1}(\R^2 - T(R))}$ on the second $\CP^{2k}$ summand.
All we need is an automorphism 
$$\left(\va \co S^{4k-1} \to S^{4k-1} , \psi \co \R \to \R \right)$$ of the Morse function 
$h \co S^{4k-1} \to \R$, where
both of $\va$ and $\psi$ reverse the orientation. If we have this automorphism, we
can take $g|_{g^{-1}(\R^2 - R)}$ and 
$T \circ g|_{(T \circ g)^{-1}(\R^2 - T(R))}$
and glue them together along the two Morse functions by this automorphism. 
Then we get $f_C \co \CP^{2k} \# \CP^{2k} \to \R^2$, see Figure~\ref{complexproj2}.

\begin{figure}[ht]
\begin{center}
\epsfig{file=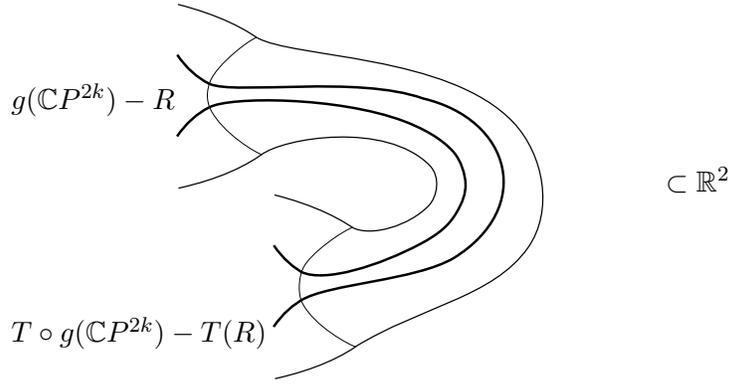, height=5cm}
\put(1.6, 2.5){$\subset \R^2$}
\put(-7.1, 3.6){$g(\CP^{2k}) - R$}
\put(-7.1, 0.5){$T \circ g(\CP^{2k}) - T(R)$}
\end{center} 
\caption{The $f_C$-image of $\CP^{2k} \# \CP^{2k}$ in $\R^2$ around the $f_C$-image of the  ``connecting tube
between the two $\CP^{2k}$ summands''. The thick arcs represent the $f_C$-image of the fold singular set.
The gluing of the ``connecting tube'' realizes the automorphism $(\va, \psi)$.}
\label{complexproj2}  
\end{figure}

We define the diffeomorphism $\va \co S^{4k-1} \to S^{4k-1}$ to be induced by the linear transformation
$$(x_1,\ldots, x_{4k}) \mapsto (x_{2k+1}, \ldots, x_{4k}, x_1, \ldots, x_{2k-1}, -x_{2k}).$$
Clearly $\va$ interchanges the critical points of indices $2k-1$ and $2k$,
maps the unstable and stable manifolds into the stable and unstable manifolds respectively,
while reversing the orientations of the unstable manifolds. 

Hence we get $f_C \co \CP^{2k} \# \CP^{2k} \to \R^2$, which is a stable fold map.

\begin{lem}\label{2CP^2-nfoldmap}
We have 
$[f_{C}|_{S_{f_C}}] \equiv 0 \mod{2}$ and
$\sum_{j=1}^{2k-1}  \tau_j (f_C)  \equiv 1  \mod{2}$.
\end{lem}
\begin{proof}
Since we obtained the fold map $f_C$ from two copies of the map $g$, 
and the performed operations did not change the number of double points 
of the image of the singular sets,
 it is clear that
$[f_{C}|_{S_{f_C}}] \equiv 0 \mod{2}$. 

We also have that $\va$ interchanges the critical points of indices $2k-1$ and $2k$,
maps the unstable and stable manifolds into the stable and unstable manifolds respectively,
while reversing the orientations of the unstable manifolds. Hence
the twisting modulo $2$ of the indefinite fold germ bundle is $1$ over the component of $S_{f_C}$
which we obtain after constructing $f_C$ and which goes through the ``connecting tube'' of $\CP^{2k} \# \CP^{2k}$. 
The twisting modulo $2$ equal to $0$ over the union of the other components, because
each of the other components of $S_{f_C}$ in the first 
$\CP^{2k}$ summand 
has an identical pair in the second  $\CP^{2k}$ summand. 
\end{proof}
This lemma completes the proof of Proposition~\ref{prop2CP^2-nfoldmapproof-al}.
\end{proof}

Recall the definition of the fold maps $\va_{j,4k-2, i_j}$ and $\va_{j,4k-2, e_j}$ in Section~\ref{mobuimm}.
\begin{lem}
The fold maps 
$\va_{j,4k-2, i_j}$ and
$\va_{j,4k-2, e_j}$, where $[i_j]$ and $[e_j]$ are the generators
of the group $\imm\left({\ep^1_{BS(O(j) \x O(4k-1-j))}},2\right) \cong \Z_2^2$, 
satisfy for $1\leq j \leq 2k-1$ the congruence $\frac{\si}{2} \equiv t + \tau \mod{2}$
of Theorem~\ref{sign}.
\end{lem}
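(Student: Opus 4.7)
My plan is to verify the congruence directly for each of the four types of generators, by analysing the structure of the Morse function bundles $\va_{j,4k-2,i_j}$ and $\va_{j,4k-2,e_j}$ introduced in Section~\ref{mobuimm}. The key observation is that the source manifold of any $\va_{j,4k-2,m}$ is a sphere bundle $S^{4k-1}\hookrightarrow Q_{j,4k-2,m}^{4k}\to S^1$ with orientation-preserving structure group, hence bounds, so the signature is zero and $\si/2\equiv 0\pmod 2$ in every case. Thus the congruence reduces to showing $t+\tau\equiv 0\pmod 2$ for both families.

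For $\va_{j,4k-2,i_j}$ I would argue as follows. The Morse function $h_j$ has four critical points of indices $0,\,j-1,\,j,\,4k-1$. Because $j\le 2k-1$, the absolute indices of these critical points are $0,\,j-1,\,j,\,0$ respectively, so the fold singular set decomposes as $S_0(f)$ (two circles, coming from the index $0$ and index $4k-1$ critical points), $S_{j-1}(f)$ (one circle), and $S_j(f)$ (one circle), and each component is mapped by $\va_{j,4k-2,i_j}$ onto a copy of the curve $i_j(S^1)\subset\R^2$. Since $i_j$ has exactly one double point and trivial normal bundle, this gives $t_0\equiv 2,\ t_{j-1}\equiv 1,\ t_j\equiv 1$, hence $t\equiv 0\pmod 2$. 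Triviality of the normal bundle of $i_j$ implies that the $O(j-1)\times O(4k-j)$-bundle and the $O(j)\times O(4k-1-j)$-bundle of fold germs over the respective singular circles are trivial, so every $\tau_\la$ vanishes and $\tau\equiv 0\pmod 2$.

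For $\va_{j,4k-2,e_j}$ the immersion $e_j$ has no double points, so all contributions to $t$ vanish and $t\equiv 0\pmod 2$. For the twisting, the non-trivial symmetry of $h_j$ coming from the inducing map $S^1\to BS(O(j)\times O(4k-1-j))$ acts non-trivially on the critical point of index $j$, giving $\tau_j([\va_{j,4k-2,e_j}])=1$. By Remark~\ref{symm}, the same symmetry also acts non-trivially on the critical point of index $j-1$, giving $\tau_{j-1}([\va_{j,4k-2,e_j}])=[e_{j-1}]=1$. Contributions from the definite folds again vanish since $\pi^s_1(BS(O(0)\times O(q+1)))=0$. Hence $\tau\equiv 1+1\equiv 0\pmod 2$, and $t+\tau\equiv 0\equiv\si/2$.

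The only subtle point, and what I regard as the main obstacle, is the bookkeeping of absolute indices versus actual Morse indices of $h_j$, together with the careful appeal to Remark~\ref{symm} to transfer the non-trivial twisting from the critical point of index $j$ to the critical point of index $j-1$. Everything else is a direct combinatorial computation from the construction of the Morse function bundle and the fact that the source manifold bounds.
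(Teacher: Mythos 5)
Your overall strategy matches the paper's: the source manifolds bound so $\si/2\equiv 0$, and one checks $t+\tau\equiv 0\pmod 2$ generator by generator. Your treatment of the $\va_{j,4k-2,i_j}$ family and of $\va_{j,4k-2,e_j}$ for $j\geq 2$ is essentially the paper's computation.

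The gap is the boundary case $j=1$ for $\va_{1,4k-2,e_1}$. First, you invoke Remark~\ref{symm} to conclude $\tau_{j-1}([\va_{j,4k-2,e_j}])=1$, but that remark is stated (and proved via \cite[Proof of Lemma~3.2]{Kal7}) only for $2\le j\le\lfloor q/2\rfloor$; for $j=1$ the Morse critical point of index $j-1=0$ is \emph{definite}, so $\tau_{j-1}=\tau_0$ is forced to vanish (as you yourself note one sentence later via $\pi^s_1(BS(O(0)\times O(q+1)))=0$), making your two statements about $\tau_{j-1}$ contradictory when $j=1$. Second, and independently, your claim ``$e_j$ has no double points, so $t\equiv 0$'' fails for $j=1$: the non-trivial monodromy of the $h_1$-bundle swaps the two index-$0$ critical points, so the definite fold singular set image acquires one crossing after a small perturbation, and in fact $t(\va_{1,4k-2,e_1})\equiv 1$. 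The correct accounting for $j=1$ is $t=1$ and $\tau=\tau_1=1$ (with $\tau_0=0$), so $t+\tau\equiv 0$ does hold, but via a different ledger than the one you wrote down. As written, your $j=1$ computation has two compensating errors; you would need to separate the $j=1$ case, use the one-double-point fact for $t$, and drop $\tau_0$ from the twisting count.
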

\begin{proof}
Concerning the signature:
The source manifolds of $\va_{j,4k-2, i_j}$ and
$\va_{j,4k-2, e_j}$ are null-cobordant, since they are sphere bundles with linear structure groups.
Hence $${\si( Q_{j,4k-2, i_j}^{4k} )}= {\si( Q_{j,4k-2, e_j}^{4k} )} = 0.$$

Concerning the double points:
By construction, $\va_{j,4k-2, e_j}$ restricted to its fold singular set 
has $1$ double point if $j=1$ (the double point of the definite fold singular set if we perturb a little),
and has no double point if $j \geq 2$.
 Clearly $\va_{j,4k-2, i_j}$ restricted to its singular set has even number of double points for each 
 $1\leq j \leq 2k-1$.
 
Concerning the twisting of the normal bundle of the singular set:
The  determinant bundle $\delta_l(\va_{j,4k-2, i_j})$ of the $O(l)$
bundle obtained by the projection $$S\left(O(l) \x O(4k-1-l)\right) \to O(l)$$ over $S_l(\va_{j,4k-2, i_j})$
is trivial ($l=j-1, j$ and $l \geq 1$), the
 determinant bundle $\delta_1(\va_{1,4k-2, e_1})$ is non-trivial, and for $j \geq 2$
 the
 determinant bundle $\delta_l(\va_{j,4k-2, e_j})$ is non-trivial for $l=j$ and $l=j-1$, see Remark~\ref{symm} in Section~\ref{mobuimm}.
 
 So if we sum up the double points for the index $0, \ldots, 2k-1$ fold singularities and 
the twistings of index  $1, \ldots, 2k-1$ fold singularities, we get 
\begin{itemize}
\item
zero for $\va_{j,4k-2, i_j}$, $1\leq j \leq 2k-1$, because we have 
even number of double points and trivial twisting,
\item
zero for $\va_{1,4k-2, e_1}$, because we have one double point (from one definite fold crossing after perturbation) and non-trivial twisting (the twisting of the index $1$ fold singular set),
\item
zero for $\va_{j,4k-2, e_j}$,  $2 \leq j \leq 2k-1$, because we have no double points and we have non-trivial twistings for both of the 
index $j-1$ and $j$ fold singularities (and two non-trivial values are zero together).
\end{itemize}
And also all the source manifolds have zero signature. This completes the proof.
 \end{proof}

Now, for an arbitrary oriented  fold map $f \co M^{4k} \to \R^2$, let us write the cobordism class $[M^{4k}]$ in the form $r[\CP^{2k} \# \CP^{2k}] + [Z^{4k}]$ as we explained before, and the cobordism class
$[f]$ in the form $r[f_C] + [f_z] + 
\sum_{1\leq j \leq 2k-1} a_j [\va_{j,4k-2, i_j}] + b_j [\va_{j,4k-2, e_j}]$, where $a_j, b_j \in \{0,1\}$.
By the above, each summand satisfies the congruence $\frac{\si}{2} \equiv t + \tau \mod{2}$ in the statement of Theorem~\ref{sign}, which
completes the proof of Theorem~\ref{sign}.
\end{proof}

\subsection{Proof for the unoriented case}\label{proof4}

\begin{lem}\label{fiberedmaprelOKunori}
Let $Z$ be a closed (possibly non-orientable) %$m$-dimensional 
manifold which fibers over $S^2$.
Then there exists a fold map $f_z \co Z \to \R^2$ such that
$$t(f_z)  \equiv 0 \mod{2}$$ and
$$\tau^1(f_z) \equiv \tau^2(f_z) \equiv 0 \mod{2}.$$ 
%and
%$$\tilde \tau(f_z) \equiv 0 \mod{2}.$$
\end{lem}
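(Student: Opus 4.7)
The plan is to imitate the construction from the proof of Proposition~\ref{fiberedmaprelOK} verbatim, replacing only the steps that used orientability of the fiber. Since $F^{4k-2}$ is a closed manifold there still exists a Morse function $\mu \co F^{4k-2} \x [0,1] \to [0,1]$ with no singularities near $F \x \{0,1\}$, attaining its maximum $1$ on $F \x \{0,1\}$, and with $\mu^{-1}(1-t) = F \x \{t\} \amalg F \x \{1-t\}$ for $0 \le t \le 1/4$. With this $\mu$ I would define $\tilde f_z \co F \x [0,1] \x S^1 \to \{1 \le |x| \le 2\} \subset \R^2$ by $(u,s,e^{i\theta}) \mapsto (2-\mu(u,s))e^{i\theta}$, exactly as before.

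To globalize on $Z$ I would use the hemisphere decomposition $S^2 = D^2_+ \cup D^2_-$. The only orientability-dependent observation in the oriented argument was that the restricted bundles $p_\pm \co Z_\pm \to D^2_\pm$ are trivial; this however holds unconditionally, since any fiber bundle over a disk is trivial, regardless of whether the fiber (or total space) is orientable. So the same piecewise definition, namely $f_z|_{Z_+} = p_+$, $f_z|_{p_-^{-1}(\{|x| \le 1\})} = p_-$, and $f_z|_{p_-^{-1}(\{1 \le |x| \le 2\})} = \tilde f_z$, yields an honest fold map $f_z \co Z \to \R^2$, with smooth matching at the interfaces guaranteed by the boundary behavior of $\mu$ just as in the oriented case.

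Two checks then remain. First, the singular set of $f_z$ sits entirely inside the annular region $F \x [0,1] \x S^1$, consists of one circle per critical point of $\mu$ (coming from the rotation by $e^{i\theta}$), and has $f_z$-image equal to a disjoint union of concentric circles in $\R^2$; in particular the image is embedded, so $t(f_z) \equiv 0 \pmod 2$. Second, and this is the main point specific to the unoriented version, each circle of fold singularities lies in a genuine product chart $F \x [0,1] \x S^1$ of $Z$, so its tubular neighborhood splits as a product of trivial sub-bundles corresponding to the positive- and negative-Hessian eigenspaces of the associated critical point of $\mu$. Hence both the $O(j)$- and $O(4k-1-j)$-factors of the structure group $O(j) \x O(4k-1-j)$ of the fold germ bundle are trivial over every component of $S_j(f_z)$, so $\tau^1_j(f_z) \equiv \tau^2_j(f_z) \equiv 0 \pmod 2$ for each $j$, and summing over $j$ yields the required vanishing of $\tau^1(f_z)$ and $\tau^2(f_z)$. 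I do not expect any real obstacle; the only point that might seem delicate is the triviality of the normal splitting along a singular circle when $F$ is non-orientable, but this is automatic because the singular set lies in a region where $Z$ is by construction a product.
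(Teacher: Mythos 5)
Your proposal is correct and takes essentially the same route as the paper, which proves the lemma by declaring it "completely analogous" to Proposition~\ref{fiberedmaprelOK} and leaving details to the reader. You have usefully unpacked why orientability plays no role: the only step that could conceivably require it, the triviality of the bundle over each hemisphere, holds because any fiber bundle over a disk is trivial, and the triviality of the $O(j)\times O(4k-1-j)$-structure along each singular circle follows from the circles sitting inside the product chart $F\times[0,1]\times S^1$, exactly as you say.
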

\begin{proof}
The proof is completely analogous to the proof of Proposition~\ref{fiberedmaprelOK}. %but instead of \cite{AK} we use \cite{Br69}. 
Details are left to the reader.
\end{proof}

Recall the definition of the fold maps $\tilde \va_{j,2k-2, i_j}$, $\tilde \va_{j,2k-2, e_j^1}$ and  $\tilde \va_{j,2k-2, e_j^2}$ in Section~\ref{mobuimm}.

\begin{lem}\label{immgencong}
For $1\leq j \leq k-1$
the fold maps 
$\tilde \va_{j,2k-2, i_j}$,
$\tilde \va_{j,2k-2, e_j^1}$ and  $\tilde \va_{j,2k-2, e_j^2}$, where $[i_j]$, $[e_j^1]$ and $[e_j^2]$ are the generators
of the group $\imm\left({\ep^1_{B(O(j) \x O(2k-1-j))}},2\right) \cong \Z_2^3$, satisfy 
$$\tau^1 + \tau^2 \equiv 0 \mod{2}.$$
\end{lem}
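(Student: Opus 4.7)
The plan is to compute $\tau^1+\tau^2$ for each generator by reducing it to a sum of first Stiefel--Whitney classes of normal bundles, and then resolving the parity through a single orientation argument on $S^{2k-1}$.

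First I would set up the geometry. Recall from Section~\ref{mobuimm} that for each $m\in\{i_j,e_j^1,e_j^2\}$ the source $\tilde Q_{j,2k-2,m}$ is the total space of an $S^{2k-1}$-bundle $\tilde Q\to S^1$ whose structure group is the symmetry group $G_j$ of the Morse function $h_j$ provided by Lemma~\ref{kiterj}, namely $O(1)\x O(2k-2)$ when $j=1$ and $O(\mathbf{1},j-1)\x O(2k-1-j)$ when $j\geq 2$. The fold singular set of $\tilde \va_{j,2k-2,m}$ consists of exactly four section circles $S_a,S_b,S_c,S_d$, one per critical point of $h_j$ (of indices $0,j-1,j,2k-1$). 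For each $p$ of absolute index $\la_p$, the normal bundle $\nu_p$ of $S_p$ in $\tilde Q$ is the bundle over $S^1$ with fibre $T_pS^{2k-1}$ and monodromy given by the $G_j$-action, and by Lemma~\ref{kiterj} it splits according to the Hessian of $h_j$ at $p$ as $\nu_p=\nu_p^-\oplus\nu_p^+$, of ranks $\la_p$ and $2k-1-\la_p$ respectively.

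The key identity is
\[
\tau^1(\tilde\va_{j,2k-2,m})+\tau^2(\tilde\va_{j,2k-2,m})\equiv \sum_{p\in\{a,b,c,d\}} w_1(\nu_p) \pmod{2},
\]
which follows because $\tau^1_{\la_p}|_{S_p}=w_1(\nu_p^-)$ and $\tau^2_{\la_p}|_{S_p}=w_1(\nu_p^+)$, together with the Whitney product formula $w_1(\nu_p)=w_1(\nu_p^-)+w_1(\nu_p^+)$.

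The main step is then an orientation argument. For $m=i_j$ the normal bundle of the base circle is trivial, so the bundle $\tilde Q\to S^1$ is trivial, every $\nu_p$ is trivial, and the sum vanishes. For $m=e_j^1$ or $m=e_j^2$ the monodromy is generated by a single element $\sigma\in G_j$. At the critical point $c$ one computes directly in the Morse chart: for $e_j^1$ the generator $\sigma$ lies in $O(\mathbf{1},j-1)$ (or $O(1)$ when $j=1$) and reverses orientation of the negative eigenspace while fixing the positive one, whereas for $e_j^2$ it lies in $O(2k-1-j)$ and reverses orientation of the positive eigenspace. In both cases $\det d\sigma|_c=-1$. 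Since $S^{2k-1}$ is connected and orientable, the sign of the Jacobian of a diffeomorphism is constant over $S^{2k-1}$, so $\det d\sigma|_p=-1$ at each fixed point $p\in\{a,b,c,d\}$. Hence each $\nu_p$, viewed as an $\R^{2k-1}$-bundle over $S^1$ with monodromy $d\sigma|_p$ of determinant $-1$, is non-orientable, giving $w_1(\nu_p)=1$. Summing over the four section circles yields $\tau^1+\tau^2\equiv 4\equiv 0 \pmod 2$.

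The main technical point I anticipate is verifying that the Hessian splitting $\nu_p=\nu_p^-\oplus\nu_p^+$ is preserved by the $G_j$-action globally over each section circle $S_p$; this is exactly the content of Lemma~\ref{kiterj}, which asserts that the chosen extension of the fold-germ symmetries respects the Morse chart at each critical point of $h_j$. A small bookkeeping remark is also in order for $j=1$: the critical point $b$ has absolute index $0$, so three of the four $w_1(\nu_p)=1$ contributions then land in $\tau^2_0$ and only one in $\tau^1_1+\tau^2_1$, but the total count over all singular circles is still $4$, so the parity conclusion is unaffected.
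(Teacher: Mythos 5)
Your proposal is correct, and it takes a genuinely different route from the paper's proof. The paper proceeds by a direct case-by-case computation of the individual twistings $\tau^i_{\la}$ for each $i\in\{1,2\}$, each relevant $\la\in\{0,j-1,j\}$, and each of the three generators $i_j, e_j^1, e_j^2$, separating $j=1$ from $j\geq 2$ and relying on the detailed analysis (as in Remark~\ref{symm}) of how the monodromy from the index~$j$ critical point of $h_j$ acts at the index~$j-1$ critical point; the definite circles are then claimed to contribute either an odd or a zero total twist depending on the case. You instead collapse the whole calculation to the single identity $\tau^1+\tau^2\equiv\sum_p w_1(\nu_p)\pmod 2$ (a direct consequence of Remark~\ref{detnyaltwist} and the Whitney formula), and observe that a diffeomorphism of the connected orientable sphere $S^{q+1}$ has Jacobian of constant sign, so if the monodromy reverses orientation at the index~$j$ critical point it does so at each of the four fixed critical points, giving $w_1(\nu_p)=1$ for all four and hence $\sum_p w_1(\nu_p)=4\equiv 0$. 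This buys you a uniform argument that needs no information about the representation of the monodromy at the points $a,b,d$ beyond the sign of its determinant, and in particular sidesteps the bookkeeping the paper needs to track which of $\tau^1_{j-1},\tau^2_{j-1},\tau^2_0$ is twisted in each case. Your intermediate conclusions differ slightly from what the paper's wording suggests --- for $j\geq 2$ you find $w_1(\nu_a)=w_1(\nu_d)=1$ (summing to $\tau^2_0=0$), whereas the paper's phrase ``no other twistings'' reads as though each definite circle contributes trivially --- but the two accounts agree modulo $2$ at the level of the invariants, so there is no conflict in the final result. Note that your argument does implicitly require that the extended automorphism of $h_j$ fixes all four critical points; this is automatic because the critical values $-\ep,-\ep/2,0,\ep$ are distinct and the automorphism, for $j<(q+1)/2$, acts trivially on the target $\R$, which is worth stating.
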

\begin{proof}
The twistings of the maps $\tilde \va_{j,2k-2, i_j}$ are trivial. %and the number of double points of the immersion of their singular set is even.

For the map $\tilde \va_{1,2k-2, e_1^1}$ we have 
non-trivial $\tau^1_1$-twisting (for the index one singular set), trivial $\tau^2_1$-twisting (again for the index one singular set) and
odd number of twisting for the index zero singular set. 
%and one double point for the immersion of the index zero singular set (and no double points for the immersions of 
%the singular sets of other index). 
So this map also satisfies $\tau^1 + \tau^2 \equiv 0 \mod{2}$.

For the map $\tilde \va_{1,2k-2, e_1^2}$ we have non-trivial $\tau^2_1$-twisting, trivial $\tau^1_1$-twisting and non-trivial twisting for the index zero singular set (three times non-trivial). %and no double points at all. 
So this map also satisfies $\tau^1 + \tau^2 \equiv 0 \mod{2}$.

The other maps $\tilde \va_{j,2k-2, e_j^1}$ (resp.\ $\tilde \va_{j,2k-2, e_j^2}$), where $j \geq 2$,
have non-trivial $\tau^1_j$-twisting and non-trivial $\tau^1_{j-1}$-twisting (resp.\ non-trivial $\tau^2_j$-twisting and non-trivial $\tau^2_{j-1}$-twisting) and no other twistings.
So they also satisfy $\tau^1 + \tau^2 \equiv 0 \mod{2}$.
\end{proof}

Now we prove Theorem~\ref{explsikbathmunori}.

\begin{proof}[Proof of Theorem~\ref{explsikbathmunori}]
We use Lemma~\ref{groupisomlemma} again but in a more sophisticated way.
By Proposition~\ref{directsumincobgroupunori} for $k \geq 1$, $q=4k-2$ the group
$\CC ob_{{}}(2, q)$ contains the group 
$$\bigoplus_{1 \leq j \leq q/2}  \imm\left({\ep^1_{B(O(j) \x O(q+1-j))}},2\right) \cong \Z_2^{6k-3}$$
as a direct summand. This direct summand will be denoted by $B$, so $\CC ob_{{}}(2, q) \cong A \oplus B$
for some group $A$.

For $C$ we take the group ${\CC}_{2+q}{(2)}$ and for $\psi$ we take the homomorphism
  $$\si_{2,q} \co \CC ob_{{}}(2, q)  \to {\CC}_{2+q}{(2)}.$$
  
Then  $\psi$ is surjective because by Remark~\ref{framinghomot}
 on any stably $1$-framed manifold $M$ there is a fold map $g$ into $\R^2$ such that 
the decomposition $TM \oplus \ep^1_M = \eta_M \oplus \ep^2_M$ induced by $g$
is cobordant to the given stable $1$-framing of $M$. 

The homomorphism 
$(\csi_{1}, \ldots, \csi_{q/2})$
 will play the role of $\pi_B$.

We want to show that $( \psi , \pi_B)$ is not only injective as Theorem~\ref{invarithm} says but also surjective.

Now, suppose that $( \psi , \pi_B)$ is not surjective. If we show that $\im (\psi \circ \iota_B) = \Z_2$ and this $\Z_2$
is the last $\Z_2$ summand in ${\CC}_{2+q}{(2)} \cong {\mathfrak N}_{4k}^{2|\chi} \oplus \Z_2$, then 
by applying case (2) of Lemma~\ref{groupisomlemma} we get that 
for $k \geq 1$, the group  $\CC ob_{{}}(2, 4k-2)$
is isomorphic to $${\mathfrak N}_{4k}^{2|\chi}    \oplus \Z_2^{6k-3}$$ 
since then the $\Z_2$ summand of ${\CC}_{2+q}{(2)} \cong {\mathfrak N}_{4k}^{2|\chi} \oplus \Z_2$ is factored out
by the quotient map $q$ (we keep the notations of Lemma~\ref{groupisomlemma}).
%Then  
%an isomorphism is given by the map
%$$[f \co M^{2k} \to \R^2] \mapsto \left( [M^{2k}] ,   
%t_1(f),  \tau_{1}^1(f), \tau_{1}^2(f), \ldots, t_{k-1}(f),  \tau_{k-1}^1(f), \tau_{k-1}^2(f) \right).$$
This will lead to a contradiction as we will see later, so $( \psi , \pi_B)$ is  surjective and since it was injective, we will get the proof of the statement.

So we show that $\im (\psi \circ \iota_B) = \Z_2$.
By Proposition~\ref{directsumincobgroupunori}, similarly to the oriented case, 
the  source manifolds $\tilde Q_{j,q, i_j}^{2+q}$, $\tilde Q_{j,q, e_j^1}^{2+q}$ and $\tilde Q_{j,q, e_j^2}^{2+q}$ of  the 
fold maps $\tilde \va_{j,q, i_j}$,
$\tilde \va_{j,q, e_j^1}$ and $\tilde \va_{j,q, e_j^2}$, respectively,
(see Section~\ref{mobuimm}) represent zero in the cobordism group ${\mathfrak N}_{2+q}$.
But the singular set of $\tilde \va_{1,q, e_1^1} \co \tilde Q_{1,q, e_1^1}^{2+q} \to \R^2$
is immersed with exactly one double point  into $\R^2$.
By Proposition~\ref{koschizomexplained}
this
 gives the non-trivial element in the direct summand $\Z_2$ of
${\CC}_{2+q}{(2)} \cong  {\mathfrak {N}}_{2+q}^{2|\chi} \oplus \Z_2$. Hence $\im (\psi \circ \iota_B) = \Z_2$.

The remaining fact to show is that $$\CC ob_{{}}(2, 4k-2) \cong {\mathfrak N}_{4k}^{2|\chi}    \oplus \Z_2^{6k-3}$$ cannot hold.
%If $\CC ob_{{}}(2, 2k-2) \cong {\mathfrak N}_{2k}^{2|\chi}    \oplus \Z_2^{3k-3}$ was true, then
%by Proposition~\ref{prop2CP^2-nfoldmap} and by the commutative diagram 
Consider the commutative diagram
\begin{center}
\begin{graph}(10,4)
\graphlinecolour{1}\grapharrowtype{2}
\textnode{D}(0.5, 3){$\Omega_{4k}^{2|\chi} \oplus \Z_2^{4k-2}$}
\textnode{E}(5.5, 3){$\left( {\mathfrak N}_{4k}^{2|\chi} \oplus \Z_2 \right)  \oplus \Z_2^{6k-3}$}
\textnode{F}(10.5, 3){${\mathfrak N}_{4k}^{2|\chi}  \oplus \Z_2^{6k-3}$}
\textnode {A}(0.5,1.5){$\CC ob_{{}}^O(2,4k-2)$}
\textnode {B}(5.5, 1.5){$\CC ob_{{}}(2,4k-2)$}
\textnode {C}(3, 0){$\Z_2$}
\diredge {A}{B}[\graphlinecolour{0}]
\diredge {B}{C}[\graphlinecolour{0}]
\diredge {A}{C}[\graphlinecolour{0}]
\diredge {A}{D}[\graphlinecolour{0}]
\diredge {B}{E}[\graphlinecolour{0}]
\diredge {D}{E}[\graphlinecolour{0}]
\diredge {E}{F}[\graphlinecolour{0}]
\diredge {B}{F}[\graphlinecolour{0}]
\freetext (1.3, 2.2){$\Im_{2,4k-2}^O$}
\freetext (4.7, 2.2){$\Im_{2,4k-2}$}
\freetext (8.2, 3.3){$q \oplus {\mathrm {id}}$}
\freetext (3,1.8){$\iota$}
\freetext (2.7,3.3){$\kappa$}
\freetext (1.1, 0.6){$\tau$}
\freetext (4.9, 0.6){$\tilde \tau$}
\freetext (8.2, 2.0){$p$}
\end{graph}
\end{center}
where $\kappa$ is the natural map corresponding to the natural map 
\begin{multline*}
{\CC}_{2+q}^O{(2)} \oplus  \bigoplus_{1 \leq j \leq q/2}  \imm\left({\ep^1_{BS(O(j) \x O(q+1-j))}},2\right) 
\lra \\  {\CC}_{2+q}{(2)} \oplus \bigoplus_{1 \leq j \leq q/2}  \imm\left({\ep^1_{B(O(j) \x O(q+1-j))}},2\right)
\end{multline*}
and $p$ is the composition $\left( q \oplus {\mathrm {id}} \right) \circ \Im_{2,4k-2}$.
Note that the arrows $\Im_{2,4k-2}^O$ and $\Im_{2,4k-2}$ are injective (and we know already that $\Im_{2,4k-2}^O$ is an isomorphism).

Take 
the cobordism class of the map $$f_C \co \CP^{2k} \# \CP^{2k} \to \R^2$$ of Proposition~\ref{prop2CP^2-nfoldmapproof-al}
in $\CC ob^O(2, 4k-2)$. 
Let us check the coordinates of $\Im_{2,4k-2}( \iota([f_C])  )$. Since $t(\iota([f_C])) = 0$ and $\CP^{2k} \# \CP^{2k}$ is null-cobordant 
in ${\mathfrak N}_{4k}^{2|\chi}$, we have that $\Im_{2,4k-2}( \iota([f_C])  )$ can have non-zero coordinates only in the direct summand $\Z_2^{6k-3}$.
Also $p( \iota([f_C])  )$ can have non-zero coordinates only in the direct summand $\Z_2^{6k-3}$.
So if $V$ denotes $$\Im_{2,4k-2} \circ \iota \left( \CC ob^O(2, 4k-2) \right) \cap \Z_2^{6k-3}$$ and
$W$ denotes
$$p \circ \iota \left( \CC ob^O(2, 4k-2) \right) \cap \Z_2^{6k-3},$$ then
$$\Im_{2,4k-2}( \iota([f_C])  ) \in V$$ and
$$p( \iota([f_C])  ) \in W.$$

Since $\tau([f_C]) = 1$, we have $\tilde \tau (\iota([f_C])) = 1$.
Now we want to compute $\tilde \tau (\iota([f_C]))$ in another way.
We know that the map $\tilde \tau$ is a homomorphism if we restrict it to the image of $\iota$.
So if we find elements $a_1, \ldots, a_l$ in $\CC ob^O(2, 4k-2)$ whose $\Im_{2,4k-2} \circ \iota$-image generates exactly 
the subspace $V$% = \Im_{2,4k-2} \circ \iota \left( \CC ob^O(2, 4k-2) \right) \cap \Z_2^{6k-3}$$ 
of 
the
direct summand $\Z_2^{6k-3}$, then
writing $\iota([f_C])$ as an appropriate linear combination of $\iota (a_1), \ldots, \iota(a_l)$ we could compute 
$\tilde \tau (\iota([f_C]))$ just by taking that linear combination of the values $\tilde \tau ( \iota ( a_1)), \ldots, \tilde \tau (\iota(a_l))$.
The classes of fold maps $\va_{j,4k-2, i_j}$, $\va_{j,4k-2, e_j}$, which generate 
 the direct summand $\Z_2^{4k-2}$ of
$\Omega_{4k}^{2|\chi} \oplus \Z_2^{4k-2}$ would be a natural choice for such elements $a_1, \ldots, a_l$ but the problem is that
$\Im_{2,4k-2} \circ \iota ([\va_{1,4k-2, e_1}])$ has a non-zero coordinate in the $\Z_2$ summand of ${\mathfrak N}_{4k}^{2|\chi} \oplus \Z_2$ so 
the $\Im_{2,4k-2} \circ \iota$-images of all the $[\va_{j,4k-2, i_j}]$ and $[\va_{j,4k-2, e_j}]$ do not generate exactly the 
subspace $V$ of the 
direct summand $\Z_2^{6k-3}$ in
$\left( {\mathfrak N}_{4k}^{2|\chi} \oplus \Z_2 \right)  \oplus \Z_2^{6k-3}$.

But we suppose now (in order to find a contradiction) that the arrow $p$ is an isomorphism.
And since the $p \circ \iota$-images of all the $[\va_{j,4k-2, i_j}]$ and $[\va_{j,4k-2, e_j}]$ generate exactly the 
subspace $W$ of the 
direct summand $\Z_2^{6k-3}$, a linear combination of the 
$\iota([\va_{j,4k-2, i_j}])$, $\iota([\va_{j,4k-2, e_j}])$s has to give $\iota ([f_C])$ and 
the same linear combination of the 

$$\tilde \tau ( \iota ( [\va_{j,4k-2, i_j}] )), \tilde \tau (\iota( [\va_{j,4k-2, e_j}]))\mathrm s$$ has to give $1$ (because $\tilde \tau (\iota([f_C])) = 1$).
In this linear combination $\tilde \tau (\iota( [\va_{1,4k-2, e_1}]))$ has to participate with non-zero coefficient because $\tau([\va_{1,4k-2, e_1}]) = 1$ 
while the other $\tau ([\va_{j,4k-2, e_j}])$ and $\tau([\va_{j,4k-2, i_j}])$ values are $0$.
But this leads to a contradiction because then $t([f_C]) = 1$ would hold since $t([\va_{1,4k-2, e_1}]) = 1$ while the other 
$t([\va_{j,4k-2, e_j}])$ and $t([\va_{j,4k-2, i_j}])$ values are $0$.
Since we know that $t([f_C]) = 0$, we have the contradiction, so the proof is finished.
\end{proof}

\begin{lem}\label{Z2partgen}
The classes
$$\Im_{2,4k-2}( [e_j^1]),  \Im_{2,4k-2}( [e_j^2]),  \Im_{2,4k-2}( [i_j])$$
for $1 \leq j \leq 2k-1$
and $\Im_{2,4k-2}( [f_C]  )$
form a basis of the  $\Z_2   \oplus \Z_2^{6k-3}$ direct summand  of $\CC ob_{{}}(2, 4k-2) \cong  {\mathfrak N}_{4k}^{2|\chi} \oplus \Z_2   \oplus \Z_2^{6k-3}$.
\end{lem}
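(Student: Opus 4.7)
The plan is to show that all $6k-2$ classes project into the $\Z_2 \oplus \Z_2^{6k-3}$ summand of the decomposition $\CC ob(2, 4k-2) \cong {\mathfrak N}_{4k}^{2|\chi} \oplus \Z_2 \oplus \Z_2^{6k-3}$ given by Theorem~\ref{explsikbathmunori}, and then to verify their linear independence; since the target has $\Z_2$-dimension $6k-2$, independence automatically yields a basis.

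First I would verify that the forgetful projection $\CC ob(2, 4k-2) \to {\mathfrak N}_{4k}^{2|\chi}$ kills each of the $6k-2$ classes. The source of each $\tilde \va_{j, 4k-2, m}$ (with $m \in \{i_j, e_j^1, e_j^2\}$) is an $S^{4k-1}$-bundle over $S^1$, which bounds the associated $D^{4k}$-bundle and is therefore null-cobordant in ${\mathfrak N}_{4k}$. The source of $f_C$ is $\CP^{2k} \# \CP^{2k}$, with $[\CP^{2k} \# \CP^{2k}] = 2[\CP^{2k}] = 0$ in ${\mathfrak N}_{4k}$ since ${\mathfrak N}_*$ has only $2$-torsion. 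Hence all $6k-2$ images lie in the desired summand.

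Next, by Proposition~\ref{directsumincobgroupunori} the classes $[i_j], [e_j^1], [e_j^2]$ generate a direct summand $H \cong \Z_2^{6k-3}$, so these $6k-3$ classes are already linearly independent; it remains to show $[f_C] \notin H$ modulo ${\mathfrak N}_{4k}^{2|\chi}$. Assuming the contrary, $[f_C] \equiv \sum_\ga c_\ga [\ga] \pmod{{\mathfrak N}_{4k}^{2|\chi}}$ for some coefficients $c_\ga \in \Z_2$. Applying $\csi_1, \ldots, \csi_{2k-1}$ uniquely determines the $c_\ga$ from $\csi([f_C])$ via the block lower-triangular matrix of $(\csi_1, \ldots, \csi_{2k-1})$ on these generators (with identity diagonal blocks, by the proof of Proposition~\ref{directsumincobgroupunori} together with the unoriented analog of Remark~\ref{symm}). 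The contradiction argument at the end of the proof of Theorem~\ref{explsikbathmunori} then shows that the resulting combination forces $\sum_\ga c_\ga t([\ga]) = 1$, contradicting $t([f_C]) = 0$ from Lemma~\ref{2CP^2-nfoldmap}.

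The main obstacle is tracing the non-trivial value of $\csi_{2k-1}([f_C])$ back through the recursion to force a non-zero contribution to $\sum c_\ga t([\ga])$, namely to show that the coefficient of $[e_1^1]$ (the only generator with $t$-value $1$, as established in the proof of Theorem~\ref{explsikbathmunori}) must be $1$. That computation is exactly the chain of implications already made explicit in the closing paragraph of the proof of Theorem~\ref{explsikbathmunori}, so the present lemma follows by repackaging that argument as a statement about a basis.
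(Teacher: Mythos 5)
Your overall plan is the right one and matches what the paper's one-line proof (``this follows from the previous proof and the constructions in Section~\ref{mobuimm}'') is pointing at: check that all $6k-2$ classes have vanishing $\mathfrak{N}_{4k}^{2|\chi}$-coordinate (which you do, correctly, via null-cobordism of the sphere-bundle sources and of $\CP^{2k}\#\CP^{2k}$), use Proposition~\ref{directsumincobgroupunori} for independence of the $6k-3$ Morse-bundle classes, and then run the contradiction argument to exclude $[f_C]$ from their span.

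There is, however, a real imprecision in the crucial step. You write that $[e_1^1]$ is ``the only generator with $t$-value $1$, as established in the proof of Theorem~\ref{explsikbathmunori}''. This is not established there. In that proof the only unoriented $t$-value computed is $t([\tilde\va_{1,q,e_1^1}])=1$; the contradiction argument itself is phrased entirely in terms of the \emph{oriented} generators $\iota([\va_{j,q,i_j}])$, $\iota([\va_{j,q,e_j}])$, for which the $t$-values are $t([\va_{1,q,e_1}])=1$ and $0$ otherwise. The values $t([\tilde\va_{j,q,e_j^1}])$, $t([\tilde\va_{j,q,e_j^2}])$ for $j\ge 2$ are never computed individually, and one cannot simply assert they are $0$. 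The argument is saved by an observation you do not make explicit: matching the twisting coordinates $\tau^1_\la$ and $\tau^2_\la$ of $\csi_\la([f_C])$ (which are $(1,1)$ at $\la=2k-1$ and $(0,0)$ otherwise) against those of the generators forces $c_{e_j^1}=c_{e_j^2}$ for all $j$. Using $\iota([\va_{j,q,e_j}])=[\tilde\va_{j,q,e_j^1}]+[\tilde\va_{j,q,e_j^2}]$ and $\iota([\va_{j,q,i_j}])=[\tilde\va_{j,q,i_j}]$, the hypothetical relation $[f_C]=\sum c_\ga[\ga]$ then lies in the image of $\iota$, where the oriented $t$-values computed in the proof of Theorem~\ref{sign} apply and give $t=1$, contradicting $t([f_C])=0$. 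Equivalently, one uses only $t([\tilde\va_{j,q,e_j^1}])+t([\tilde\va_{j,q,e_j^2}])=t([\va_{j,q,e_j}])$, which is $1$ for $j=1$ and $0$ for $j\ge 2$, together with $c_{e_j^1}=c_{e_j^2}$ and $t([\tilde\va_{j,q,i_j}])=0$, so the pairwise contributions cancel for $j\ge 2$ and the total is $1$. Once this is spelled out, your proof is complete and follows the paper's intended route.
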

\begin{proof}
This follows from the previous proof and the constructions in Section~\ref{mobuimm}.
\end{proof}

Finally, we prove Theorem~\ref{unorirel}.
\begin{proof}[Proof of Theorem~\ref{unorirel}]
We just have to check that all the generators of $$\CC ob_{{}}(2, 4k-2) \cong {\mathfrak N}_{4k}^{2|\chi} \oplus \Z_2   \oplus \Z_2^{6k-3}$$
satisfy $\tau^1 + \tau^2 \equiv 0 \mod{2}$.
The generators represented by the fold maps 
$\tilde \va_{j,q, g} \co \tilde Q_{j,q, g}^{2+q} \to \R^2$, where $g$ runs over the elements of 
$\{ i_j, e_j^1, e_j^2 : 1 \leq j \leq q/2 \}$, satisfy $\tau^1 + \tau^2 \equiv 0 \mod{2}$
by Lemma~\ref{immgencong}.

By \cite{Br69} every class in ${\mathfrak N}_{4k}^{2|\chi}$ fibers over $S^2$.
Also, by Lemma~\ref{fiberedmaprelOKunori} any fold map $f_z \co Z \to \R^2$
where $Z$ fibers over $S^2$ satisfies $\tau^1(f_z) + \tau^2(f_z) \equiv 0 \mod{2}$.

We need one additional generator which gives us the possibility to generate the $\Z_2$ summand of 
${\mathfrak N}_{4k}^{2|\chi} \oplus \Z_2$ independently from any other summand.
The map $$f_C \co \CP^{2k} \# \CP^{2k} \to \R^2$$ provided by Proposition~\ref{prop2CP^2-nfoldmapproof-al} can serve this purpose
 because the classes
$$\Im_{2,4k-2}( [f_C]  ), \Im_{2,4k-2}( [e_j^1]),  \Im_{2,4k-2}( [e_j^2]),  \Im_{2,4k-2}( [i_j])$$
generate exactly the $\Z_2   \oplus \Z_2^{6k-3}$ part of $\CC ob(2, 4k-2)$ by Lemma~\ref{Z2partgen}.

Since this map $f_C \co \CP^{2k} \# \CP^{2k} \to \R^2$ also
satisfies $\tau^1(f) + \tau^2(f) \equiv 0 \mod{2}$,
we get the result.
\end{proof}


\begin{thebibliography}{AAAAAA}


\bibitem[AK80]{AK}
J. C. Alexander and S. M. Kahn,
Characteristic number obstructions to fibering oriented and complex manifolds
over surfaces, Topology {\bf 19} (1980), 265--282.

\bibitem[An02a]{An2}
Y. Ando,
Fold-maps and the space of base point preserving maps of spheres,
J. Math. Kyoto Univ. {\bf 41} (2002), 693--737.


\bibitem[An02b]{An5}
\bysame,
%Y. Ando,
Invariants of fold-maps via stable homotopy groups,
Publ. RIMS, Kyoto Univ. {\bf 38} (2002), 397--450.





\bibitem[An04]{An3}
\bysame,
%Y. Ando,
Existence theorems of fold-maps,
Japan. J. Math. {\bf 30} (2004), 29--73.


\bibitem[An06]{An}
\bysame,
%{Y. Ando},
{Stable homotopy groups of spheres and higher singularities},
J. Math. Kyoto Univ. {\bf 46} (2006), 147--165.


\bibitem[An08]{An6}
\bysame,
%{Y. Ando},
Cobordisms of maps with singularities of a given class,
Algebr. Geom. Topol. {\bf 8} (2008), 1989--2029.

\bibitem[Bo75]{Bo75}
W. M. Boothby, 
An introduction to differentiable manifolds and Riemannian geometry,
Academic Press, 1975.


\bibitem[Br69]{Br69}
R. L. W. Brown,
Cobordism and bundles over spheres, Mich. Math. J. {\bf 16} (1969), 315--320.


\bibitem[Ch80]{Chess}
D. Chess,
A Poincar\'e-Hopf type theorem for the de Rham invariant,
Bull. Amer. Math. Soc. {\bf 3} (1980), 1031--1035.


\bibitem[ES03]{ES03}
T.\ Ekholm and A.\ Sz\H{u}cs, Geometric formulas for Smale invariants of codimension two immersions,
Topology {\bf 42} (2003), 171--196.


\bibitem[EST07]{EST07}
T. Ekholm, A. Sz\H{u}cs and T. Terpai,
Cobordisms of fold maps and maps with a prescribed number of cusps,
Kyushu J. Math. {\bf 61} (2007), 395--414.

\bibitem[El70]{Eli1}
J. M. \`Elia\u{s}berg,
On singularities of folding type, 
Math. USSR-Izv. {\bf 4} (1970), 1119--1134.

\bibitem[El72]{Eli2}
\bysame,
%J. M. Eliashberg,
Surgery of singularities of smooth mappings,
Math. USSR-Izv. {\bf 6} (1972), 1302--1326.

\bibitem[GG73]{GoGu}
M. Golubitsky and V. Guillemin, Stable mappings and their singularities, Graduate Texts in 
Math. 14, Springer-Verlag, New York, 1973. 

%\bibitem[GS99]{GS99}
%R. Gompf and A. Stipsicz, 
%{ $4$-manifolds and Kirby calculus}, AMS Grad. Studies 
%in Math. {\bf20} (1999).



\bibitem[Hi94]{Hi94}
M.\ W.\ Hirsch,
Differential topology, Springer-Verlag, 1994.




\bibitem[Hu93]{Hu93}
D. Husemoller,
Fibre bundles, Springer-Verlag, 1993.



\bibitem[Ik04]{Ik}
{K. Ikegami},
{Cobordism group of Morse functions on manifolds},
Hiroshima Math. J. {\bf 34} (2004), 211--230.


\bibitem[IS03]{IS03}
{K. Ikegami and O. Saeki},
{Cobordism group of Morse functions on surfaces}, J. Math. Soc. Japan {\bf 55}
(2003), 1081--1094.


\bibitem[J\"a78]{Jan}
{ K. J\"anich},
{Symmetry Properties of Singularities of $C^\infty$-Functions}, Math. Ann.
{\bf 238} (1978), 147--156. 



\bibitem[Ka05]{Kal}
{B. Kalm\'ar},
{Cobordism group of Morse functions on unoriented surfaces},
Kyushu J. Math. {\bf 59} (2005), 351--363.


\bibitem[Ka07]{Kal2}
\bysame,
%{B. Kalm\'ar},
{Cobordism group of fold maps of oriented 3-manifolds into the plane}, 
Acta Math. Hungar. {\bf 117} (2007), 1--25.


\bibitem[Ka08]{Kal7}
\bysame,
Cobordism invariants of fold maps, Real and complex singularities, 
Contemporary Mathematics
{\bf 459} (2008), 103--115.


%
%\bibitem[Ka08b]{Kal8}
%\bysame,
%Fold maps, their geometric invariants, and computations of cobordism groups, PhD thesis, Kyushu University, April 2008.

%
%\bibitem[Ka08c]{Ka8c}
%\bysame,
%Cobordism of fold maps, stably framed manifolds and immersions, arxiv:0803.1666v1.



%\bibitem[Ka08c]{Kal9}
%\bysame,
%Cobordisms of fold maps, stably framed manifolds, and 
%a Poincar\'e-Hopf type theorem for the signature, Proceedings of
%Geometry of singularities and manifolds, Kusatsu, 2008, (2009), 25--38.


\bibitem[Ka09a]{Kal4}
\bysame,
%{B. Kalm\'ar},
Fold cobordisms and stable homotopy groups, Studia Scient. Math. Hungar. {\bf 46} (2009), 437--447.





\bibitem[Ka09b]{Kal6}
\bysame,
%{B. Kalm\'ar},
Fold maps and immersions from the viewpoint of cobordism, Pacific J. Math. {\bf 239} (2009), 317--342.


\bibitem[KT12]{KT12}
{B.\ Kalm\'ar} and T.\ Terpai, Characteristic classes and existence of singular maps, 
Trans. Amer. Math. Soc. {\bf 364} (2012), 3751--3779.




\bibitem[K\'a00]{Ka00}
T.\ K\'alm\'an, 
Stable maps of surfaces into the plane, Topology Appl. {\bf 107} (2000), 307--316.



\bibitem[Ko81]{Ko}
{U. Koschorke},
{Vector fields and other vector bundle morphisms - a singularity approach},
Lect. Notes in Math. {\bf 847}, Springer-Verlag, 1981.  


\bibitem[Le65]{Lev1}
H. Levine,
Elimination of cusps, 
Topology {\bf 3} (1965) suppl. 2, 263--296.


\bibitem[Le66]{Lev2}
\bysame,
Mappings of manifolds into the plane,
Amer. J. Math. {\bf 88} (1966), 357--365.


\bibitem[Mi84]{Mi84}
K.\ C.\ Millett, 
Generic smooth maps of surfaces, Topology Appl.\ {\bf 18} (1984), 197--215.


\bibitem[MS74]{MS74}
J.\ W.\ Milnor and J.\ D.\ Stasheff, Characteristic classes, Princeton Univ.\ Press and Univ.\ of Tokyo Press, 1974.

\bibitem[\"Oz02]{Oz02}
B.\ \"Ozba\u{g}ci, Signatures of Lefschetz fibrations, Pacific J. Math. {\bf 202} (2002), 99--118.



\bibitem[RS98]{RSz}
{R. Rim\'anyi and A. Sz\H{u}cs},
{Generalized Pontrjagin-Thom construction for maps with singularities}, 
Topology {\bf 37} (1998), 1177--1191.



%
%\bibitem[Sad05]{Sad1}
%R. Sadykov,
%Bordism groups of special generic mappings,
%Proc. Amer. Math. Soc. {\bf 133} (2005), no. 3, 931--936. 

\bibitem[Sad09]{Sad09}
R. Sadykov,
Bordism groups of solutions to differential relations,
Algebr. Geom. Topol. {\bf 9} (2009), 2311--2347.


\bibitem[Sad12]{Sad4}
\bysame,
Fold maps, framed immersions and smooth structures, 
Trans. Amer. Math. Soc. {\bf 364} (2012), 2193--2212.



\bibitem[Sae92]{Sa1}
{O. Saeki},
Notes on the topology of folds,
J. Math. Soc. Japan {\bf 44} (1992), no. 3, 551--566.

%\bibitem[Sae02]{Sae02}
%\bysame,
%Cobordism groups of special generic functions and 
%groups of homotopy spheres,
%Japan. J. Math. (N.S.) {\bf 28} (2002), 287--297.

\bibitem[Sae03]{Sae03}
\bysame,
Fold maps on 4-manifolds,
Comment. Math. Helv. {\bf 78} (2003), 627--647.




\bibitem[Sae06]{Sa6}
\bysame,
Cobordism of Morse functions on surfaces, universal complex of singular fibers, and their application to map germs, 
Algebraic and Geometric Topology {\bf 6} (2006), 539--572.


\bibitem[SY06]{SY06}
{O. Saeki and T. Yamamoto},
{Singular fibers of stable maps and signatures of 4-manifolds},
Geom. Topol. {\bf 10} (2006), 359--399.



\bibitem[St68]{Sto}
{R. E. Stong},
Notes on cobordism theory,
Princeton University Press and the University of Tokyo Press, Princeton, 1968.



\bibitem[SST10]{SST10}
E.\ Szab\'o, A.\ Sz\H{u}cs and T.\ Tam\'as,
On bordism and cobordism groups of Morin maps, Journal of Singularities {\bf 1} (2010), 134--145.



\bibitem[Sz79]{Szucs1}
{A. Sz\H{u}cs},
{Analogue of the Thom space for mappings with singularity of type $\Sigma^1$} 
(in Russian),
Math. Sb. (N.S.) {\bf 108 (150)} (1979), 433--456, 478;
English translation: Math. USSR-Sb. {\bf 36} (1980), 405--426.




\bibitem[Sz08]{Szucs4}
\bysame,
%{A. Sz\H{u}cs},
Cobordism of singular maps, Geom. Topol. {\bf 12} (2008), 2379--2452.



\bibitem[ST19]{ST19}
A.\ Sz\H{u}cs and T.\ Tam\'as,
On the classifying spaces of cobordisms of singular maps, submitted.







\bibitem[TY06]{TY06}
T.\ Yamamoto, Classification of singular fibres of stable maps of $4$-manifolds into $3$-manifolds
and its applications, J. Math. Soc. Japan {\bf 58} (2006), 721--742.


\bibitem[Wa60]{Wa1}
C. T. C. Wall,
Determination of the cobordism ring,
Ann. of Math. {\bf 72} (1960), 292--311. 



\bibitem[Wa80]{Wa}
\bysame,
%C. T. C. Wall,
A second note on symmetry of singularities,
Bull. London Math. Soc. {\bf 12} (1980), 347--354.


\bibitem[We66]{We}
{R. Wells},
{Cobordism of immersions}, Topology {\bf 5} (1966), 281--294.



\end{thebibliography}
\end{document}